\numberwithin{equation}{section}
\theoremstyle{Theorem}
\newtheorem{thm}{Theorem}[section]
\newtheorem{lemma}[thm]{Lemma}
\newtheorem{prop}[thm]{Proposition}
\newtheorem{coro}[thm]{Corollary}
\theoremstyle{definition}
\newtheorem{dfn}[thm]{Definition}
\newtheorem{exm}[thm]{Example}
\newtheorem{rmk}[thm]{Remark}
\newenvironment{customthm}[1]{\innercustomthm}{\endinnercustomthm}
\newcommand\norm[1]{\lVert#1\rVert}
\newcommand\grau[1]{\lvert#1\rvert}
\def\R{\mathbb{R}}
\def\g{\mathfrak{g}}
\def\Z{\mathbb{Z}}
\def\R{\mathbb{R}}
\def\N{\mathbb{N}}
\def\tsigma{\tilde{\sigma}}
\def\thanks#1{\protected@xdef\@thanks{\@thanks
        \protect\footnotetext{#1}}}
\title{\textbf{Equivariant deformation problems and homotopy operators}}
\author{Sebastián Daza\thanks{scdaza.95@gmail.com}}
\author{Jo\~ao Nuno Mestre\thanks{jnmestre@proton.me}}
\affil{CMUC, University of Coimbra, Department of Mathematics, Portugal.}
\date{}
\begin{document}
\maketitle
\begin{abstract}
We use homotopy operators for the $L_\infty$-algebra associated with an equivariant deformation problem in order to describe a smooth parametrization of the space of structures around a given one.
 Along the way we give new algebraic and explicit proofs of rigidity and unobstructedness theorems.
\end{abstract}
\tableofcontents
\section{Introduction}
When studying an algebraic or geometric structure (e.g. associative algebras, Lie algebras, complex structures), it is fruitful to understand how it changes under small variations. In other words, the goal is to understand a neighbourhood of a given structure inside the space of such structures up to equivalence - roughly a \textit{moduli space}.

By considering \textit{deformations}, i.e. paths in the space of structures starting at a given one, we can approximate infinitesimally such neighbourhoods by the spaces of tangent vectors to deformations. 
The infinitesimal consequences of the equations that the structures under observation must satisfy (e.g associativity, the Jacobi identity, Cauchy-Riemann equations) lead to the construction of a deformation cochain complex, such that tangent vectors to deformations represent deformation cocycles. 

This approach has been standard since the classical works on the deformation theories of complex structures by Fr\"{o}licher--Nijenhuis \cite{FN} and Kodaira--Spencer \cite{KodairaSpencerI, KodairaSpencerIII}, of associative algebras by Gerstenhaber \cite{gerstI1964}, and of Lie algebras by Nijenhuis--Richardson \cite{NijRicI}, among many others since then. 

All these works found descriptions of appropriate deformation complexes. It was also shown in \cite{Kur1, GerstBra, NijRicI}, and stressed in \cite{NijRicII}, that the complexes carried additional structure relevant to describing moduli spaces, in the form of compatible Lie brackets (making the deformation complex into a differential graded Lie algebra). This structure is useful, for example, in order to understand which deformation cocycles can arise as tangent vectors to actual deformations (e.g. in \cite{Kur1}). 

From the early observations of Nijenhuis--Richardson, a guiding principle arose, postulated by Deligne \cite{Deligne}, Drinfeld \cite{letter}, and in an equivalent formulation by Schlessinger--Stasheff \cite{Schlessinger_Stasheff}: Any reasonable deformation problem in characteristic zero is controlled by a differentiable graded Lie algebra. This principle has been recently turned into a theorem \cite{pridham,lurie}, stated as an equivalence between the $\infty$-categories of formal moduli problems and of differential graded Lie algebras. Nonetheless, there is no general procedure to explicitly describe the Lie brackets for any deformation cochain complex, although there are different methods available for several classes of examples.

In this paper we are interested in studying deformation problems for which the space of structures can be described as the space $\sigma^{-1}(0)$ of zeros of a section $\sigma\in \Gamma(E)$ of a vector bundle $\pi:E\rightarrow M$. 
In fact, we will consider the extra structure 
\[
				\begin{tikzcd}
					 E\curvearrowleft G \arrow[d]  \arrow[r,"\Phi"] & F\arrow[dl]\\
					\arrow[ u, bend left, dashed, "\sigma"]  M \curvearrowleft G
				\end{tikzcd}
\]
consisting of suitable Lie group actions of $G$ on $M$ and on $E$, an equivariant section $\sigma\in \Gamma_G(E)$  and a vector bundle map $\Phi$  satisfying $\Phi\circ \sigma=0$.  We call this structure an \textit{equivariant deformation problem}. 

Consider $\sigma^{-1}(0)\subset M$ as a topological space with the subspace topology.
Intuitively, $M$ describes the space of `almost' structures, while $\sigma^{-1}(0)$ describes the space of actual structures. For example, $\sigma$ being an associator or Jacobiator, leads to $\sigma^{-1}(0)$ being the spaces of associative or Lie algebra structures on a fixed vector space. The action of $G$ identifies equivalent structures; the role of $F$ and $\Phi$ will become clear later. We are interested in the following:

 \ \
 
 \textbf{Main question:} For a given solution $x_0\in \sigma^{-1}(0)$,
 
  is there an open $x_0\in U\cap\sigma^{-1}(0)$ that admits a smooth structure?
 
\ \
 
If such an open exists we say that the equivariant deformation problem is \textit{integrable} at  $x_0$. On the other hand, the structure defining an equivariant deformation problem implies that $G(x_0)\subset \sigma^{-1}(0)$, with $ G(x_0)$ the orbit. Given that $G(x_0)$ has a smooth structure, another natural question is whether the orbit is open in $\sigma^{-1}(0)$ around $x_0$, i.e. $G(x_0)\cap U= \sigma^{-1}(0)\cap U$. In this case we call $x_0$ \textit{rigid}. Now, consider the sequence 
\begin{equation}\label{ses}
	\g\overset{d_{e}m_{x_0}}{\rightarrow} T_{x_0} M \overset{d_{x_0}^v\sigma}{\rightarrow} E_{x_0}\overset{\Phi_{x_0}}{\rightarrow} F_{x_0},
\end{equation}
with $m_{x_0}: G\rightarrow M$ the action at $x_0$ and $d^v_{x_0} \sigma$ the vertical derivative.  In \cite{crainic2014survey} the authors proved that the exactness of the sequence at $T_{x_0}M$ and $E_{x_0}$ implies rigidity of $x_0$, and integrability at $x_0$, respectively (see Propositions $4.3$ and $4.4$). Their proof relies, essentially, on the inverse function theorem: it is a clever concatenation of the constant rank theorem and transversality. In fact, their results can be stated as:                                                                                                                                                                                                                                                                                                                                                                                                                                                                                                                                                                                                                                                                                                                                                                                                                                                                                                                                                                                                                                                                                                                                                                                                                                                                                                                                                                                                                                                                                                                                                                                                                                                                                                                                                                                                                                                                                                                                                                                                                                                                                                                                                                                                                                                                                                                                                                                                                                                                                                                                                                                                                                                                                                                                                                                                                                                                                                                                                                                                                                                                                                                                                                                                                                                                                                                                                                                                                                                                                                                                                                                                                                                                                                                                                                                                                                                                                                                                                                                                                                                                                                                                                                                                                                                                                                                                                                                                                                                                                                                                                                                                                                                                                                                                                                                                                                                                                                                                                                                                                                                                                                                                                                                                                                                                                                                                                                                                                                                                                                                                                                                                                                                                                                                                                                                                                                                                                                                                                                                                                                                                                                                                                                                                                                                                                                                                                                                                                                                                                                                                                                                                                                                                                                                                                                                                                                                                                                                                                                                                                                                                                                                                                                                                                                                                                                                                                                                                                                                                                                                                                                                                                                                                                                                                                                                                                                                                                                                                                                                                                                                                                                                                                                                                                                                                                                                                                                                                                                                                                                                                                                                                                                                                                                                                                                                                                                                                                                                                                                                                                                                                                                                                                                                                                                                                                                                                                                                                                                                                                                                                                                                                                                                                                                                                                                                                                                                                                                                                                                                                                                                                                                                                                                                                                                                                                                                                                                                                                                                                                                                                                                                                                                                                                                                                                                                                                                                                                                                                                                                                                                                                                                                                                                                                                                                                                                                                                                                                                                                                                                                                                                                                                                                                                                                                                                                                                                                                                                                                                                                                                                                                                                                                                                                                                                                                                                                                                                                                                                                                                                                                                                                                                                                                                                                                                                                                                                                                                                                                                                                                                                                                                                                                                                                                                                                                                                                                                                                                                                                                                                                                                                                                                                                                                                                                                                                                                                                                                                                                                                                                                                                                                                                                                                                                                                                                                                                                                                                                                                                                                                                                                                                                                                                                                                                                                                                                                                                                                                                                                                                                                                                                                                                                                                                                                                                                                                                                                                                                                                                                                                                                                                                                                                                                                                                                                                                                                                                                                                                                                                                                                                                                                                                                                                                                                                                                                                                                                                                                                                                                                                                                                                                                                                                                                                                                                                                                                                                                                                                                                                                                                                                                                                                                                                                                                                                                                                                                                                                                                                                                                                                                                                                                                                                                                                                                                                                                                                                                                                                                                                                                                                                                                                                                                                                                                                                                                                                                                                                                                                                                                                                                                                                                                                                                                                                                                                                                                                                                                                                                                                                                                                                                                                                                                                                                                                                                                                                                                                                                                                                                                                                                                                                                                                                                                                                                                                                                                                                                                                                                                                                                                                                                                                                                                                                                                                                                                                                                                                                                                                                                                                                                                                                                                                                                                                                                                                                                                                                                                                                                                                                                                                                                                                                                                                                                                                                                                                                                                                                                                                                                                                                                                                                                                                                                                                                                                                                                                                                                                                                                                                                                                                                                                                                                                                                                                                                                                                                                                                                                                                                                                                                                                                                                                                                                                                                                                                                                                                                                                                                                                                                                                                                                                                                                                                                                                                                                                                                                                                                                                                                                                                                                                                                                                                                                                                                                                                                                                                                                                                                                                                                                                                                                                                                                                                                                                                                                                                                                                                                                                                                                                                                                                                                                                                                                                                                                                                                                                                                                                                                                                                                                                                                                                                                                                                                                                                                                                                                                                                                                                                                                                                                                                                                                                                                                                                                                                                                                                                                                                                                                                                                                                                                                                                                                                                                                                                                                                                                                                                                                                                                                                                                                                                                                                                                                                                                                                                                                                                                                                                                                                                                                                                                                                                                                                                                                                                                                                                                                                                                                                                                                                                                                                                                          
\[
\text{infinitesimal integrability/rigidity} \ \Rightarrow \text{integrability/rigidity}.
\]
These results did not yet use the additional algebraic structure that we expect the deformation complex \ref{ses} to carry. In the PhD thesis \cite{baarsma2019deformations} the author showed that the Taylor expansions of the structures defining the equivariant deformation problem induce a (curved) $L_\infty$-structure $(V,\ell)$ on the sequence \ref{ses}  \cite[Theorem 5.2.4]{baarsma2019deformations}. In fact, the equation 
\[
	\sum_{k\geq 0} \dfrac{1}{k!} d^k\sigma(0)(v,\overset{k}{\cdots}, v)=0,
\]
corresponds to the so called \textit{Maurer-Cartan equation} 
\[
	\sum_{k\geq 0} \dfrac{1}{k!} \ell_k(v, \overset{k}{\cdots}, v)=0.
\]
We call the solutions of this equation \textit{Maurer-Cartan elements}. Accordingly, when $\sigma$ is analytic around $x_0$, we get a local correspondence between $\sigma^{-1}(0)$ and Maurer-Cartan elements \cite[Theorem 5.2.5]{baarsma2019deformations}. Given that the sequence \ref{ses} corresponds to 
\[
	V_{-1} \overset{\ell_1}{\rightarrow} V_0 \overset{\ell_1 }{\rightarrow} V_1 \overset{\ell_1}{\rightarrow} V_2,
\]
and the exactness of \ref{ses} at $T_{x_0}M$ and $E_{x_0}$ amounts to the vanishing of the cohomologies $H^0(V,\ell)$ and $H^1(V,\ell)$, then
\begin{align*}
	\text{infinitesimal rigidity} \ &\Leftrightarrow \ H^0(V,\ell)=0,\\
	\text{infinitesimal integrability} \ &\Leftrightarrow \  H^1(V,\ell)=0.
\end{align*}
As long as $V$ is finite dimensional, the vanishing $H^i(V,\ell)=0$ is equivalent to the existence of linear maps $h_1: V_{i}\rightarrow V_{i-1}$ and $h_2: V_{i+1}\rightarrow V_{i}$ such that 
\[
	\ell_1\circ h_1+h_2\circ \ell_1= id_{V_i}.
\]
The linear maps $h_1,h_2$ are called \textit{homotopy operators in degree} $i$.
\\

 Our main contributions are to show how to construct, from homotopy operators in degrees $0$ and $1$, smooth structures on $\sigma^{-1}(0)$ around $x_0$. 
 For rigidity we get a parametrization of the orbit in terms of homotopy operators in degree $0$. Additionally, we give a new proof of a rigidity result for Lie algebras, in terms of the parallel transport of a connection defined in terms of homotopy operators. 
 
 Integrability is more subtle. We will use the local correspondence between $\sigma^{-1}(0)$ and Maurer-Cartan elements. In fact, we give a differential geometric proof of the well-known fact that the cohomology $H^1(V,\ell)$ of an $L_ \infty$-algebra controls the existence of formal Maurer-Cartan elements. Indeed, we get a recursive process to construct a formal Maurer-Cartan element $u_t=\sum_{k\geq 0} \frac{u_k}{k!} t^k$, by solving the infinite sequence of cohomological equations
\[
	\ell_1(u_{k+1})= -Obs^k(u_0, \ldots, u_{k}),
\]
where $Obs^k(u_0,\ldots, u_{k})\in H^1(V,\ell)$ are appropriate obstruction classes. We use homotopy operators in degree $1$ to provide the explicit solutions 
\[
	u_{k+1}=- h_1(Obs^k(u_0, \ldots, u_k)).
\]
Finally we show that, provided that the $L_\infty$-algebra is $N$-strict, $u_t$ converges. Using this construction, we describe a smooth structure on $\sigma^{-1}(0)$ around $x_0$. These arguments and constructions, in terms of homotopy operators, can be compared with Remark $4.6$ of \cite{crainic2004perturbation}, which says that the perturbation lemma corresponds to the algebraic version of Newton's iteration method. Indeed, we use a homotopy operator, recursively, to construct a formal solution and then we prove its convergence. 

\subsection*{Outline of the paper}

In section 2 we use homotopy operators to give an alternative, algebraic proof of a rigidity result (Theorem \ref{rigiditysurvey}) for equivariant deformation problems:

\begin{customthm}{1}
Let $x_0\in \sigma^{-1}(0)$. If $x_0$ is infinitesimally rigid then it is rigid.
\end{customthm}

Additionally, we use homotopy operators for the deformation complex of Lie algebras to construct a connection on a tautological bundle of Lie algebras. Using the associated parallel transport we give a new proof of a rigidity result (see Theorem \ref{rigidityPT}).

\begin{customthm}{2}
If $H^1(\mathfrak{g})=0$ then $\mathfrak{g}$ is rigid. 
\end{customthm}

In section 3 we provide an introduction to $L_\infty$-algebras and Maurer-Cartan elements.

In section 4 we give an explicit description of the obstruction classes $Obs^k$, and use them to recursively construct formal Maurer-Cartan elements (see Theorem \ref{formalint}):

\begin{customthm}{3} Let $ (V,\ell)$ be an $L_\infty$-algebra such that 
$H^1(V,\ell)=0$. Then any infinitesimal deformation can be extended to a formal deformation.
\end{customthm}

In section 5 we use the explicit description of formal Maurer-Cartan elements obtained before, together with homotopy operators, in order to obtain an integrability result (see Theorem \ref{intdgla} and Corollary \ref{MaxIntNew}), i.e. smooth parametrizations for $\sigma^{-1}(0)$ around a structure $x_0$.

\begin{customthm}{4}
Let $(M\overset{\sigma}{\rightarrow} E\overset{\Phi}{\rightarrow} F)\curvearrowleft G$ be an analytic deformation problem with $(V,\ell)$ its associated deformation $L_\infty$-algebra. If $(V,\ell)$ is $N$-strict and $H^1(V,\ell)=0$, then $\sigma^{-1}(0)$ is smooth around $x_0$.
\end{customthm}

We collect in Appendix A the combinatorial background material used in the proofs; in Appendix B we compute higher order derivatives of the differential of an $L_\infty$-structure.

\subsection*{Acknowledgements} The authors would like to thank Camilo Arias Abad, Diogo Soares and Ricardo Campos for fruitful discussions related to the work of this paper.

This work was supported by the Fundação
para a Ciência e a Tecnologia (FCT) under the scope of the project UID/00324 – Center for Mathematics of the University of Coimbra. Sebasti\'an Daza acknowledges FCT for support under the Ph.D. Scholarship UI/BD/152072/2021.

\section{Rigidity}\label{Sec-rigidity}
\subsection{Deformations}
Let $\pi_E:E\rightarrow M$ be a vector bundle  with an action $E\curvearrowleft G$ such that the zero section $0_M\cdot G=0_M$ is $G$-invariant. Accordingly, $M$ inherits an action of $G$. Take a $G$-equivariant section $\sigma\in\Gamma_G (E)$. We want to study the space of solutions of $\sigma(x)=0$. Fix $x_0\in M$ such that $\sigma(x_0)=0$ and consider $\sigma^{-1}(0)\subset M$ with the subspace topology.
\begin{dfn}
A smooth path $x_t: I\rightarrow M$ that starts at $x_0$ and satisfies $\sigma(x_t)=0$ for all $t\in I$ is called a \textit{deformation of} $x_0$. 
\end{dfn}
The \textit{vertical derivative} of $\sigma$ at $x_0$ is the linear map $d_{x_0}^v\sigma: T_{x_0}M\rightarrow E_{x_0}$ given by 
\[
	T_{x_0}M\overset{d_{x_0}\sigma} {\rightarrow }T_{0_{x_0}} E \overset{\mathrm{can}}{\cong} T_{x_0} M\oplus E_{x_0} \overset{pr}{\rightarrow} E_{x_0}, 
\]
where the isomorphism is canonically induced by the zero section $0_M$. The following is a standard result in deformation theory.
\begin{prop}\label{Tsigma}
If $x_t$ is a deformation of $x_0$ then $\partial_{t=0} \ x_t\in \ker d_{x_0}^v \sigma$. 
\end{prop}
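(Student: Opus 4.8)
The plan is to differentiate the defining equation $\sigma(x_t) = 0$ at $t=0$ and carefully extract the vertical component, using the canonical splitting of $T_{0_{x_0}}E$ induced by the zero section. First I would observe that the curve $t \mapsto \sigma(x_t)$ is a smooth path in $E$ which, by hypothesis, is constantly equal to $0_{x_0} = 0_M(x_0)$; in particular it is a path lying inside the image of the zero section. Hence $\partial_{t=0}\,\sigma(x_t) = 0 \in T_{0_{x_0}}E$ as a tangent vector. On the other hand, by the chain rule this same tangent vector equals $d_{x_0}\sigma\bigl(\partial_{t=0}\,x_t\bigr)$, since $\sigma(x_t) = \sigma \circ x_t$ and $\partial_{t=0}\,x_t \in T_{x_0}M$ by Proposition \ref{Tsigma}'s hypothesis that $x_t$ is a deformation (so $x_0$ is the basepoint).

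Next I would push this equality through the canonical isomorphism $\mathrm{can}\colon T_{0_{x_0}}E \xrightarrow{\ \cong\ } T_{x_0}M \oplus E_{x_0}$. Under this identification the zero vector $0 \in T_{0_{x_0}}E$ maps to $(0,0)$, so $\mathrm{can}\bigl(d_{x_0}\sigma(\partial_{t=0}x_t)\bigr) = (0,0)$. Applying the projection $pr$ onto the second factor $E_{x_0}$ and comparing with the definition of the vertical derivative $d^v_{x_0}\sigma = pr \circ \mathrm{can} \circ d_{x_0}\sigma$, I get $d^v_{x_0}\sigma\bigl(\partial_{t=0}\,x_t\bigr) = pr(0,0) = 0$, which is exactly the assertion that $\partial_{t=0}\,x_t \in \ker d^v_{x_0}\sigma$.

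The only point that requires a little care — and the step I expect to be the main (mild) obstacle — is justifying that the first-factor projection of $d_{x_0}\sigma(\partial_{t=0}x_t)$ under $\mathrm{can}$ is indeed $\partial_{t=0}\,x_t$ itself, i.e. that $d_{x_0}\sigma$ is a section of $d\pi_E$ over $\mathrm{id}_{T_{x_0}M}$. This follows from $\pi_E \circ \sigma = \mathrm{id}_M$ by differentiating, but one should also note it is not strictly needed for the statement: here the whole vector $d_{x_0}\sigma(\partial_{t=0}x_t)$ vanishes, so both components are zero and the conclusion about the vertical part is immediate. I would therefore present the argument in the streamlined form: $\sigma(x_t)$ constant $\Rightarrow d_{x_0}\sigma(\partial_{t=0}x_t) = 0 \Rightarrow d^v_{x_0}\sigma(\partial_{t=0}x_t) = 0$, remarking that the constancy is the full content of the hypothesis and the rest is the definition of $d^v$ applied to the zero vector.
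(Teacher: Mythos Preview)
Your argument contains a genuine error. The curve $t\mapsto \sigma(x_t)$ is \emph{not} constantly equal to $0_{x_0}=0_M(x_0)$: the hypothesis $\sigma(x_t)=0$ means $\sigma(x_t)=0_M(x_t)\in E_{x_t}$, so the curve moves along the zero section as $x_t$ moves in $M$. Consequently $\partial_{t=0}\,\sigma(x_t)$ is not the zero vector in $T_{0_{x_0}}E$; it equals $d_{x_0}(0_M)(\partial_{t=0}x_t)$, which is tangent to the image of the zero section and generally nonzero. Your ``streamlined form'' $\sigma(x_t)$ constant $\Rightarrow d_{x_0}\sigma(\partial_{t=0}x_t)=0$ therefore fails, and the remark that ``the whole vector vanishes, so both components are zero'' is exactly where the gap lies.

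The fix is immediate once you track the correct curve: since $\sigma\circ x_t = 0_M\circ x_t$ as paths in $E$, the chain rule gives $d_{x_0}\sigma(\partial_{t=0}x_t)=d_{x_0}(0_M)(\partial_{t=0}x_t)$. The canonical splitting $T_{0_{x_0}}E\cong T_{x_0}M\oplus E_{x_0}$ is defined precisely so that the image of $d_{x_0}(0_M)$ is the first summand $T_{x_0}M\oplus 0$; hence the projection $pr$ onto $E_{x_0}$ kills it, and $d^v_{x_0}\sigma(\partial_{t=0}x_t)=0$ as desired. Ironically, the point you flagged as a mild obstacle and then dismissed---that the horizontal component of $d_{x_0}\sigma(\partial_{t=0}x_t)$ is $\partial_{t=0}x_t$ itself---is the whole content of the proof. (Alternatively, pass to a local trivialization $E\vert_U\cong U\times V$; there $\sigma$ becomes a map $U\to V$, the condition $\sigma(x_t)=0$ really is constancy of a $V$-valued function, and your original computation goes through verbatim.) The paper states the result without proof, calling it standard.
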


In this sense we think of $\ker d_{x_0}^v\sigma$ as the model space for the tangent space ``$T_{x_0}\sigma^{-1}(0)$''. In other words, the best situation we can expect is that $\sigma^{-1}(0)$ has locally, around $x_0$, a manifold structure modelled on $\ker d_{x_0}^v\sigma$.

\subsection{Rigidity}
Let $G(x_0)$ be the orbit of $x_0$ under the action $M\curvearrowleft G$. By the $G$-equivariance of $\sigma$ we have that $G(x_0)\subset \sigma^{-1}(0)$. 
\begin{dfn}
A solution $x_0$ is called \textit{rigid} if there exists an open neighbourhood $x_0\in U\subset M$ such that $G(x_0) \cap U=\sigma^{-1}(0)\cap U$.
\end{dfn}
In the next Proposition we show that rigidity, seen as openness of the orbit around $x_0$, is equivalent to another notion of rigidity found commonly in the literature, for example in Proposition $4.3$ of \cite{crainic2014survey}. 
\begin{prop}\label{eqrig}
A solution $x_0$ is rigid if and only if there exists an open set $x_0\in U\subset M$ and smooth map $h: U\rightarrow G$ such that, for every $y\in \sigma^{-1}(0)\cap U$, we have that $y=x_0\cdot h(y)$. 
	\begin{proof}
		$(\Rightarrow)$ Given that orbits of Lie group actions are locally embedded, there exists an open neighbourhood $x_0\in U\subset M$ such that $V':=G(x_0)\cap U$ is an open of $G(x_0)$. 
				Take a relatively compact open set $V$ such that $x_0\in V\subset  \bar{V}\subset V^\prime\subset G(x_0)$. Let $G_{x_0}$ be the isotropy of $G$ at $x_0$. The diffeomorphism $\mu:G/G_{x_0}\cong G(x_0)$ gives us a relatively compact open set $W=\mu^{-1}(V)$ of $W'=\mu^{-1}(V')$ such that $[e]\in W\subset \bar{W}\subset W^\prime \subset G/G_{x_0}$. Given that $G_{x_0}\subset G$ is a closed subgroup then $G\rightarrow G/G_{x_0}$ is a $G_{x_0}$-principal bundle.  
				Shrink $W'$ if necessary and take a section $s\in \Gamma_{W^\prime}( G\rightarrow G/G_{x_0})$. It induces a smooth map $h=s\circ \mu^{-1}: V^\prime \rightarrow G$ such that $y=x_0\cdot h(y)$. The result follows from extending $h: \bar{V}\rightarrow G$ to a smooth map $h: U\subset M\rightarrow G$. Indeed, shrinking $U$ if necessary, since $x_0$ is rigid, we can assume that $G(x_0) \cap U=\sigma^{-1}(0)\cap U$.  \\
		$(\Leftarrow)$ We know that $G(x_0)\subset \sigma^{-1}(0)$. By hypothesis $ \sigma^{-1}(0)\cap U\subset G(x_0)\cap U$ and the result follows. 
	\end{proof}
\end{prop}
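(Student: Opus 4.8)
The plan is to treat the two implications separately; the reverse implication is immediate, and the forward one carries all the content. For the reverse implication, $G$-equivariance of $\sigma$ already gives $G(x_0)\subset\sigma^{-1}(0)$, hence $G(x_0)\cap U\subset\sigma^{-1}(0)\cap U$; conversely, if $y\in\sigma^{-1}(0)\cap U$ then $y=x_0\cdot h(y)\in G(x_0)$, so $\sigma^{-1}(0)\cap U\subset G(x_0)\cap U$. The two inclusions yield rigidity.

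For the forward implication, suppose $x_0\in U_0$ is open with $G(x_0)\cap U_0=\sigma^{-1}(0)\cap U_0$. I would proceed in three steps. First, build a smooth local section of the orbit map near $x_0$: since the isotropy $G_{x_0}$ is a closed subgroup, $G\to G/G_{x_0}$ is a principal $G_{x_0}$-bundle and hence admits smooth local sections $s$ near the class $[e]$; composing with the inverse of the orbit diffeomorphism $\mu\colon G/G_{x_0}\xrightarrow{\ \cong\ }G(x_0)$ produces a smooth $h=s\circ\mu^{-1}$ on a small open $V'$ of the orbit around $x_0$, with $x_0\cdot h(y)=\mu(\mu^{-1}(y))=y$ for $y\in V'$. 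Here one uses that orbits of Lie group actions, although only initial submanifolds in general, are embedded submanifolds of $M$ in a neighbourhood of any of their points, so $V'$ may be taken to be an embedded submanifold of some open set in $M$. Second, extend $h$ off the orbit: choosing a relatively compact $V$ with $x_0\in V\subset\bar V\subset V'$ and invoking a tubular neighbourhood of the embedded submanifold $V'$ (equivalently, a smooth retraction onto it), extend $h|_{\bar V}$ to a smooth map $\tilde h$ on an open neighbourhood $U_1$ of $x_0$ in $M$. Third, set $U:=U_1\cap U_0$ and shrink further so that $\sigma^{-1}(0)\cap U=G(x_0)\cap U$ is contained in $V$; then for $y\in\sigma^{-1}(0)\cap U$ we have $y\in V$, $\tilde h(y)=h(y)$, and $x_0\cdot\tilde h(y)=y$, as required.

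The main obstacle I anticipate is not any single deep fact but the bookkeeping of the nested neighbourhoods in the forward direction: one must simultaneously keep the orbit embedded, keep the section $h$ smooth, extend it to $M$, and ensure that $\sigma^{-1}(0)$ does not escape the region where the extension $\tilde h$ still agrees with the genuine section $h$, so that the identity $x_0\cdot\tilde h(y)=y$ survives. The soft inputs — $G$-equivariance, the principal bundle structure of $G\to G/G_{x_0}$, local embeddedness of orbits, and smooth extension from an embedded submanifold — are all standard; the care lies in ordering the shrinking correctly.
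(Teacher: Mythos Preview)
Your proposal is correct and follows essentially the same route as the paper: both use the principal bundle structure of $G\to G/G_{x_0}$ to produce a local section $s$, compose with $\mu^{-1}$ to get $h$ on an open piece $V'$ of the orbit, extend $h$ from a relatively compact $\bar V\subset V'$ to an open set in $M$, and then shrink using rigidity so that $\sigma^{-1}(0)\cap U$ lands where the extension still satisfies $x_0\cdot h(y)=y$. If anything, your step-three bookkeeping (explicitly ensuring $\sigma^{-1}(0)\cap U\subset V$ so that the extended map agrees with the original section there) is slightly more careful than the paper's phrasing.
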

Let $m_{x_0}: G\rightarrow M$ be $g\mapsto x_0\cdot g$. By the $G$-equivariance of $\sigma$, the sequence 
	\begin{equation}\label{rigidseq}
		\begin{tikzcd}
			\g \arrow[r," d_e m_{x_0}"]& T_{x_0} M \arrow[r," d_{x_0}^v\sigma"] & E_{x_0}.
		\end{tikzcd}
	\end{equation}
is a cochain complex. 
\begin{dfn}
The solution $x_0$ is called \textit{infinitesimally rigid} if the sequence \ref{rigidseq} is exact at $T_{x_0}M$. 
\end{dfn}
Proposition $4.3$ of \cite{crainic2014survey} says the following:
\begin{thm}\label{rigiditysurvey}
Let $x_0\in \sigma^{-1}(0)$. If $x_0$ is infinitesimally rigid then it is rigid.
\end{thm}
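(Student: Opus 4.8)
The plan is to reduce, via Proposition \ref{eqrig}, to producing an open neighbourhood $x_0\in U\subseteq M$ together with a smooth map $h\colon U\to G$ such that $y=x_0\cdot h(y)$ for every $y\in\sigma^{-1}(0)\cap U$; once such an $h$ is in hand, the $(\Leftarrow)$ implication of Proposition \ref{eqrig} gives rigidity immediately. The content of infinitesimal rigidity is the identity $\ker d^v_{x_0}\sigma=\operatorname{im} d_e m_{x_0}$, which I would repackage as a degree-$0$ homotopy operator for \ref{rigidseq}, i.e. linear maps $h_1\colon T_{x_0}M\to\g$ and $h_2\colon E_{x_0}\to T_{x_0}M$ with $d_e m_{x_0}\circ h_1+h_2\circ d^v_{x_0}\sigma=\operatorname{id}_{T_{x_0}M}$; equivalently (and this is all I really need) a choice of vector-space complement $C$ to $\ker d^v_{x_0}\sigma$ inside $T_{x_0}M$. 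This is the algebraic object I want to feed into a geometric normal-form argument.

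First I would use $C$ (produced, say, from $\operatorname{im} h_2$ or $\ker h_1$) to manufacture an embedded disk $S\subseteq M$ through $x_0$ with $T_{x_0}S=C$. Since $d^v_{x_0}\sigma$ restricts to an injective map on $T_{x_0}S$, working in a local trivialization of $E$ near $x_0$ shows that $\sigma|_S$ has injective derivative at $x_0$, hence is locally injective; as $\sigma(x_0)=0$, this forces $S\cap\sigma^{-1}(0)=\{x_0\}$ on a possibly smaller $S$. This is where infinitesimal rigidity is used: once through the dimension count that makes $C$ a genuine complement, and once (through exactness, i.e.\ $T_{x_0}S\cap\ker d^v_{x_0}\sigma=0$) to make the slice meet the zero set only at $x_0$.

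Next I would consider the smooth map $\Psi\colon G\times S\to M$, $\Psi(g,s)=s\cdot g$. Its differential at $(e,x_0)$ is $(\xi,w)\mapsto d_e m_{x_0}(\xi)+w$, whose image is $\operatorname{im} d_e m_{x_0}+T_{x_0}S=\ker d^v_{x_0}\sigma+C=T_{x_0}M$, so $\Psi$ is a submersion at $(e,x_0)$; by the submersion normal form it admits a smooth local section $y\mapsto(h(y),s(y))$ on a neighbourhood $U$ of $x_0$, normalized so that $(h(x_0),s(x_0))=(e,x_0)$ and $s(y)\cdot h(y)=y$. Finally, for $y\in\sigma^{-1}(0)\cap U$, $G$-equivariance of $\sigma$ gives $0=\sigma(y)=\sigma\bigl(s(y)\bigr)\cdot h(y)$, and since the $G$-action on $E$ is fibrewise linear (the zero section being $G$-invariant), the map $\cdot\,h(y)\colon E_{s(y)}\to E_y$ is a linear isomorphism, so $\sigma\bigl(s(y)\bigr)=0$. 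Hence $s(y)\in S\cap\sigma^{-1}(0)=\{x_0\}$, i.e.\ $y=x_0\cdot h(y)$, which is exactly the hypothesis needed in Proposition \ref{eqrig}.

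I do not expect a deep obstacle here: the argument is an assembly of the immersion/submersion normal forms with the equivariance bookkeeping of the last step, the two points needing care being the extraction of a slice of the correct dimension from the homotopy operator and the fibrewise-linearity input that lets $\sigma(s(y))\cdot h(y)=0$ descend to $\sigma(s(y))=0$. An alternative, more explicitly homotopy-operator-driven route would be to define $h$ by an iteration of the form $y\mapsto y\cdot\exp\bigl(-h_1(\phi(y))\bigr)$ in a chart $\phi$ centred at $x_0$: the homotopy identity together with $\sigma(y)=0$ makes a single step contract $\sigma^{-1}(0)$ towards $x_0$ quadratically, in the spirit of the Newton-iteration remark in the introduction; there the difficulty would migrate to proving convergence and smooth dependence on $y$ of the limiting group element, which is why I would present the slice argument as the main proof.
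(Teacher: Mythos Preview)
Your argument is correct. One small quibble: you invoke ``fibrewise linearity'' of the $G$-action on $E$ to pass from $\sigma(s(y))\cdot h(y)=0$ to $\sigma(s(y))=0$, but the paper only assumes the zero section is $G$-invariant. Fortunately linearity is not needed: the action of $h(y)$ is a diffeomorphism of fibres sending $0_{s(y)}$ to $0_y$, so injectivity alone gives the conclusion.

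Your route differs from the paper's. You build a slice $S$ with $T_{x_0}S=C$, show $\sigma|_S$ is an immersion so that $S\cap\sigma^{-1}(0)=\{x_0\}$, and then use that the action map $G\times S\to M$ is a submersion to obtain a local section $(h,s)$; equivariance then forces $s(y)=x_0$ on $\sigma^{-1}(0)$. The paper instead uses the homotopy operator $h_1$ to write down an explicit parametrization $\psi_h=m_{x_0}\circ\exp\circ h_1$ of the orbit, straightens this embedding to a chart $\varphi:U\to\ker d_{x_0}\sigma\oplus C$, and then applies a constant-rank argument to the map $(v,c)\mapsto(v,\sigma(v,c))$ to see that $\sigma(v,c)=0$ forces $c=0$. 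Your slice argument is the more standard geometric packaging and makes the role of equivariance very transparent; what it does not deliver is the explicit formula for $h$ in terms of $h_1,h_2$ that the paper highlights (their Remark after the proof), which is the advertised payoff of the homotopy-operator viewpoint. The iterative alternative you sketch at the end is closer in spirit to that explicit description.
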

\subsection{Homotopy operators and rigidity}
Let $(V^\bullet, \delta)$ be a cochain complex and $H_\delta^\bullet(V)$ its cohomology. 
\begin{dfn}
The pair of linear maps $h_{k+1}: V^{k+1}\rightarrow V^k$ and $h_k: V^k\rightarrow V^{k-1}$ are called \textit{homotopy operators in degree} $k$ if
\[
	\delta\circ h_k + h_{k+1}\circ \delta= Id.
\]
\end{dfn}
The following is a standard result:
\begin{prop}\label{hom}
Let $V^\bullet$ be finite dimensional. Then $H^k_\delta(V)=0$ if and only if there exists homotopy operators in degree $k$. 
\end{prop}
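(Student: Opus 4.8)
The plan is to establish the two implications separately. The implication ``existence of homotopy operators $\Rightarrow H^k_\delta(V)=0$'' is formal and uses no hypothesis on dimension: given $h_k\colon V^k\to V^{k-1}$ and $h_{k+1}\colon V^{k+1}\to V^k$ with $\delta\circ h_k+h_{k+1}\circ\delta=\mathrm{Id}_{V^k}$, apply this identity to an arbitrary cocycle $v\in\ker(\delta\colon V^k\to V^{k+1})$ to get $v=\delta(h_k(v))+h_{k+1}(\delta v)=\delta(h_k(v))$, so every cocycle is a coboundary and $H^k_\delta(V)=0$.

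For the converse I would use finite-dimensionality to split the complex around degree $k$. Write $Z^k:=\ker(\delta|_{V^k})$ and $B^{k+1}:=\mathrm{im}(\delta|_{V^k})$, and pick complements to obtain decompositions $V^{k-1}=\ker(\delta|_{V^{k-1}})\oplus C^{k-1}$, $V^k=Z^k\oplus C^k$ and $V^{k+1}=B^{k+1}\oplus D^{k+1}$. Then $\delta$ restricts to isomorphisms $C^{k-1}\xrightarrow{\ \sim\ }\mathrm{im}(\delta|_{V^{k-1}})$ and $C^k\xrightarrow{\ \sim\ }B^{k+1}$. Here is the one place the hypothesis $H^k_\delta(V)=0$ is used: it says precisely that $\mathrm{im}(\delta|_{V^{k-1}})=Z^k$, so the first of these is an isomorphism $C^{k-1}\xrightarrow{\ \sim\ }Z^k$.

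I would then define $h_k\colon V^k\to V^{k-1}$ to be the inverse of $\delta|_{C^{k-1}}\colon C^{k-1}\to Z^k$ on the summand $Z^k$ and zero on $C^k$, and $h_{k+1}\colon V^{k+1}\to V^k$ to be the inverse of $\delta|_{C^k}\colon C^k\to B^{k+1}$ on the summand $B^{k+1}$ and zero on $D^{k+1}$. The final step is to check the homotopy identity: for $v=z+c$ with $z\in Z^k$ and $c\in C^k$ one has $\delta(h_k v)=\delta(h_k z)=z$ by construction of $h_k$, while $\delta v=\delta c\in B^{k+1}$ (as $\delta z=0$) and hence $h_{k+1}(\delta v)=h_{k+1}(\delta c)=c$; adding gives $\delta(h_k v)+h_{k+1}(\delta v)=v$, as required.

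There is no serious obstacle here: the statement is essentially a restatement of the fact that a finite-dimensional cochain complex is (non-canonically) a direct sum of two-term complexes $0\to \mathbb{R}\xrightarrow{\ 1\ }\mathbb{R}\to 0$ and one-term complexes $0\to\mathbb{R}\to 0$, and the only point to be careful about is that exactness at $V^k$ is exactly what makes the preimage defining $h_k$ exist. Alternatively, one could fix inner products on the $V^j$ and take $h=\delta^{\ast}$ composed with a Green's operator for the associated Laplacian, which is forced to be invertible off the (vanishing) harmonic part; but the splitting argument above avoids any choice of metric.
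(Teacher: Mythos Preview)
Your proof is correct and complete; both implications are handled cleanly, and the splitting argument for the converse is the standard one. Note that the paper itself does not prove this proposition at all---it is stated as ``a standard result'' with no argument given---so there is nothing to compare against beyond observing that your write-up supplies exactly the kind of routine verification the authors chose to omit.
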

Now we are going to give an alternative proof of Theorem \ref{rigiditysurvey} using homotopy operators. Even though the idea of the proof is in essence the same, i.e. to use the constant rank theorem, we are going to see how homotopy operators provide an explicit parametrization of the orbit. Moreover, they give a clearer frame on how, and why, the constant rank theorem is used. First of all, since the results we want to prove are local, we suppose $E=M\times V$ for some vector space $V$ and $\sigma: M\rightarrow V$. Hence $d^v_{x_0}\sigma$ corresponds to $d_{x_0}\sigma$. 

\begin{proof}[Alternative proof of Theorem \ref{rigiditysurvey}]
 By hypothesis the sequence \ref{rigidseq} is exact and so by Proposition \ref{hom} there exists homotopy operators $h_2: V \rightarrow T_{x_0}M$  and $h_1: T_{x_0}M\rightarrow \g$ such that 
\begin{equation}\label{homotopyrelation}
	d_em_{x_0}\circ h_1+ h_2\circ d_{x_0}\sigma= Id. 
\end{equation}
Let $\psi_h: \ker d_{x_0}\sigma\rightarrow G(x_0)\subset M$ be given by the composition
\[
	\begin{tikzcd}
			 \ker d_{x_0}\sigma \arrow[r, "h_1"] & \g \arrow[r," \exp"]& G  \arrow[r," m_{x_0}"] & G(x_0)\subset M.
		\end{tikzcd}
\]
Equation \ref{homotopyrelation} implies that $d_0 \psi_h= Id$. Additionally, $\dim G(x_0)= \mathrm{rk}\ d_em_{x_0} = \dim  \ker d_{x_0}\sigma$ and so $\psi_h$ gives a local parametrization of the orbit around $x_0$. Moreover, since $\psi_h$ is an embedding, there exists a complement $T_{x_0} M= \ker d_{x_0}\sigma\oplus C$ and a chart $\varphi: U\subset M \rightarrow \ker d_{x_0}\sigma\oplus C$, with $x_0\mapsto 0$, and such that 
\[
	y\in G(x_0) \Longleftrightarrow \varphi(y)=(v,0)\in \ker d_{x_0}\sigma\oplus C.
\]
Hence $x_0$ is rigid if and only if $\sigma(v,c)=0$ implies that $c=0$. To see this, define $\Phi:  \ker d_{x_0}\sigma\oplus C\rightarrow  \ker d_{x_0}\sigma\oplus V$ by $(v,c)\mapsto (v, \sigma(v,c))$. Given that $\sigma(v,0)=0$ for all $v$ and $\mathrm{rk} \ d_{x_0}\sigma = \dim C$, then $d_{0}\Phi$ has maximal constant rank and the result follows. 
\end{proof}

\begin{rmk}
With respect to the previous structure, the map $h: U\subset M\rightarrow G$ given in Proposition \ref{eqrig} corresponds to 
\[
	h:= \exp\circ h_1\circ (Id-h_2\circ d_{x_0}\sigma)\circ \varphi.
\]
\end{rmk}

\subsection{Rigidity of Lie algebras and parallel transport}
Let $\g$ be a finite dimensional vector space and let $C^k(\g):= Hom(\wedge^k\g,\g)$. Consider the action $C^k(\g)\curvearrowleft GL(\g)$ given by 
		\[
			(\eta\cdot A)(x_1,\ldots, x_k):= A^{-1}\eta(Ax_1, \ldots, Ax_k). 
		\]
A \textit{Lie algebra structure} on $\g$ is an element $\mu\in C^2(\g)$ which is a zero of the Jacobiator $Jac: C^2(\g)\rightarrow C^3(\g)$, given by 
\[
	Jac(\mu)(x,y,z)= \mu(\mu(x,y),z)+\mu(\mu(y,z),x)+\mu(\mu(z,x),y).
\]
Accordingly, Lie algebras can be thought as zeros of the equivariant section  
		\[
				\begin{tikzcd}
					 C^2(\g)\times C^3(\g) \curvearrowleft GL(\g) \arrow[d] \\
					\arrow[ u, bend left, dashed, " Jac"]  C^2(\g) \curvearrowleft GL(\g) 
				\end{tikzcd}
		\]
We denote the space of Lie algebra structures on $\g$ by $Lie(\g):= Jac^{-1}(0)$. 

The trivial vector bundle $\tau_{C^2(\g)}:=C^2(\g)\times \g\rightarrow C^2(\g)$ is equipped with a tautological skew-symmetric bilinear operation $[-,-]:\wedge^2\tau_{C^2(\g)}\to\tau_{C^2(\g)}$, given by
$$[-,-]: C^\infty(C^2(\g), \g)\times C^\infty(C^2(\g), \g) \rightarrow C^\infty(C^2(\g), \g),$$
\[
	[ \alpha, \beta] (\mu)= \mu(\alpha(\mu),\beta(\mu)). 
\]

We call the pair $(\tau_{C^2(\g)}, [-,-])$ the \textit{tautological bundle of} $C^2(\g)$. The restriction of $\tau_{C^2(\g)}$ to the subspace $Lie(\g)\subset C^2(\g)$ is a topological vector subbundle; the restriction of $[-,-]$ makes it into a bundle of Lie algebras.

\begin{dfn}
The \textit{tautological bundle} of $\g$ is the (topological) bundle of Lie algebras $\tau_\g:=(Lie(\g)\times \g\rightarrow Lie(\g), [-,-])$. 
\end{dfn}

Now fix a $\mu_0\in Lie(\g)$ and suppose that we have homotopy operators $h_2^{\mu_0}: C^3(\g)\rightarrow C^2(\g)$ and $h_1^{\mu_0}: C^2(\g)\rightarrow C^1(\g)$ for the sequence
\begin{equation*}
		\begin{tikzcd}
			C^1(\g) \arrow[r," d_e m_{\mu_0}"]& C^2(\g) \arrow[r," d_{\mu_0} Jac"] & C^3(\g).
		\end{tikzcd}
\end{equation*}
Take an open set $\mu_0\in U\subset C^2(\g)$ such that the maps $H_2:U\times C^3(\g)\rightarrow C^2(\g)$ and $H_1:U\times C^2(\g)\rightarrow C^1(\g)$, given by 

\begin{align*}
H_2(\mu,\cdot):=& h_2^{\mu_0}\circ (1-  (d_{\mu-\mu_0} Jac)\circ h_2^{\mu_0})^{-1},\\
H_1( \mu,\cdot) :=& h_1^{\mu_0}\circ (1-  (d_{\mu-\mu_0} m) \circ h_2^{\mu_0})^{-1}
\end{align*}

are well-defined. Denote $h_2^\mu= H_2(\mu, \cdot)$ and $h_1^\mu=H_1(\mu,\cdot)$. By Proposition $3.4$ of \cite{crainic2004perturbation}, whenever $\mu\in Lie(\g)\cap U$, we have that $h_1^\mu$ and $h_2^\mu$ are homotopy operators for 
\begin{equation}\label{laseq2}
		\begin{tikzcd}
			C^1(\g) \arrow[r," d_e m_{\mu}"]& C^2(\g) \arrow[r," d_{\mu} Jac"] & C^3(\g).
		\end{tikzcd}
\end{equation}
Consider the restriction of $\tau_{C^2(\g)}$ to $U$, the bundle $(U\times \g\rightarrow U,[-,-])$. Given that $U\subset C^2(\g)$ is an open subset of a vector space, we can identify $\mathfrak{X}(U)$ with $C^\infty(U, C^2(\g))$. Define the connection $\nabla: \mathfrak{X}(U) \times\Gamma(U\times \g) \rightarrow \Gamma(U\times\g)$ by
\[
	\nabla_X \xi :=  \mathcal{L}_X \xi + H_1(X)(\xi). 
\]
A direct computation shows the following:
\begin{lemma}
For every $\alpha,\beta\in \Gamma( U\times \g)$ and $X\in\mathfrak{X}(U)$ we have that 
\begin{multline}\label{lem}
	\nabla_X [\alpha,\beta]-[\nabla_X \alpha,\beta]-[\alpha,\nabla_X\beta]= \\
	 X(\alpha, \beta)+ H_1(X)( [\alpha,\beta])- [H_1(X)(\alpha),\beta]- [\alpha, H_1(X)(\beta)]. 
\end{multline}
\end{lemma}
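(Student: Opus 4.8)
The plan is to unwind every object into an honest map out of $U$ and then differentiate. Since the bundle $U\times\g\to U$ is trivial, a section $\xi\in\Gamma(U\times\g)$ is a smooth map $\xi\colon U\to\g$; the vector field $X$ is a smooth map $X\colon U\to C^2(\g)$ under the identification $\mathfrak{X}(U)\cong C^\infty(U,C^2(\g))$; the Lie derivative $\cL_X\xi$ is just the directional derivative $\mu\mapsto d_\mu\xi(X(\mu))$; and $H_1(X)\colon U\to C^1(\g)=\mathrm{End}(\g)$ is a smooth endomorphism-valued map, with $H_1(X)(\xi)$ the section $\mu\mapsto H_1(X)(\mu)(\xi(\mu))$, which is manifestly $C^\infty(U)$-linear in $\xi$. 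The tautological bracket is $[\alpha,\beta](\mu)=\mu(\alpha(\mu),\beta(\mu))$, and $X(\alpha,\beta)$ denotes the section $\mu\mapsto X(\mu)(\alpha(\mu),\beta(\mu))$.

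The crux is a Leibniz-type identity for $\cL_X$ with respect to $[-,-]$. Writing $[\alpha,\beta]$ as the composite of $\mu\mapsto(\mu,\alpha(\mu),\beta(\mu))$ with the evaluation map $\mathrm{ev}\colon C^2(\g)\times\g\times\g\to\g$, $(\mu,x,y)\mapsto\mu(x,y)$, and using that $\mathrm{ev}$ is linear in each of its three arguments, the chain and product rules give
\[
\cL_X[\alpha,\beta](\mu)=X(\mu)\big(\alpha(\mu),\beta(\mu)\big)+\mu\big((\cL_X\alpha)(\mu),\beta(\mu)\big)+\mu\big(\alpha(\mu),(\cL_X\beta)(\mu)\big),
\]
that is, $\cL_X[\alpha,\beta]=X(\alpha,\beta)+[\cL_X\alpha,\beta]+[\alpha,\cL_X\beta]$. (In coordinates: fixing a basis $\{e_i\}$ of $\g$ and writing $\mu(e_i,e_j)=c_{ij}^k(\mu)e_k$ with $c_{ij}^k$ linear functionals, one has $[\alpha,\beta]=\alpha^i\beta^j c_{ij}^k e_k$, and the term $\alpha^i\beta^j\,\cL_X(c_{ij}^k)\,e_k$ produced by differentiation is exactly $X(\alpha,\beta)$ because $c_{ij}^k$ is linear.) The one point worth isolating is this extra summand $X(\alpha,\beta)$: it appears precisely because the bracket varies over the base, so $\cL_X$ also differentiates the structure constants.

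Granting this, the statement follows by bookkeeping. Expanding $\nabla_X=\cL_X+H_1(X)(-)$ in each of the three terms on the left-hand side, and using that $H_1(X)(-)$ is tensorial,
\begin{multline*}
\nabla_X[\alpha,\beta]-[\nabla_X\alpha,\beta]-[\alpha,\nabla_X\beta]
=\big(\cL_X[\alpha,\beta]-[\cL_X\alpha,\beta]-[\alpha,\cL_X\beta]\big)\\
+\big(H_1(X)([\alpha,\beta])-[H_1(X)(\alpha),\beta]-[\alpha,H_1(X)(\beta)]\big),
\end{multline*}
and the first parenthesis equals $X(\alpha,\beta)$ by the Leibniz identity, which is exactly \eqref{lem}.

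I do not expect a genuine obstacle: the proof is the direct computation above. The only thing to handle with care is the base-point dependence of the tautological bracket when applying $\cL_X$, which is the source of the term $X(\alpha,\beta)$; everything else is tensorial.
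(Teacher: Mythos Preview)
Your proof is correct and is exactly the direct computation the paper alludes to: expand $\nabla_X=\cL_X+H_1(X)(-)$ on both sides, and use the Leibniz identity $\cL_X[\alpha,\beta]=X(\alpha,\beta)+[\cL_X\alpha,\beta]+[\alpha,\cL_X\beta]$, which holds because the tautological bracket depends on the base point. The paper omits the details, but your write-up is precisely what is intended.
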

\begin{prop}
Let $\mu_t: I\rightarrow U\cap Lie(\g)$ be a deformation of $\mu_0$. Then, for every $\alpha,\beta\in \Gamma(U\times \g)$, we have that
\[
	\nabla_{\partial_t \mu_t} [\alpha,\beta]= [\nabla_{\partial_t \mu_t} \alpha, \beta]+[\alpha, \nabla_{\partial_t \mu_t} \beta].
\]
\begin{proof}
By Proposition \ref{Tsigma} we know that $\partial_t \gamma(t)\in \ker d_{\gamma(t)} Jac$. The fact that $h_1^{\mu_t}$ is an homotopy operator for the sequence \ref{laseq2}, with $\mu=\mu_t$, implies that  
\[
	(d_{e}m_{\mu_t} \circ h_1^{\mu_t})(\partial_t \mu_t)= \partial_t \mu_t. 
\]
But for $A\in C^1(\g)$ we get
\[
	d_e m_{\mu_t} (A)(x,y)= \mu_t(A(x), y)+\mu_t (x, A(y))- A(\mu_t(x,y)),
\]
and then 
\[
	 (\partial_t \mu_t) (x, y)+ h_1^{\mu_t}(\partial_t \mu_t)( \mu_t(x,y))- \mu_t(h_1^{\mu_t}(\partial_t \mu_t )(x),y)- \mu_t(x, h_1^{\mu_t}(\partial_t \mu_t)(y))=0. 
\]
Letting $x=\alpha(\mu_t)$, $y=\beta(\mu_t)$, $X=\partial_t \mu_t$, and recalling that $[-,-](\mu_t)=\mu_t$, we conclude that the right hand side of \ref{lem} vanishes and the result follows.
\end{proof}
\end{prop}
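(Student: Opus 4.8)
The plan is to deduce the derivation identity directly from the Lemma above (equation \ref{lem}), applied to the vector field $X=\partial_t\mu_t$, by showing that the right-hand side of \ref{lem} vanishes along the deformation. Since \ref{lem} is a pointwise identity, I would regard $\partial_t\mu_t$ as a vector field defined only along the curve $\mu_t$: all that matters is its value $\partial_t\mu_t\in T_{\mu_t}U=C^2(\g)$ at each time, and the identity \ref{lem} depends only on that value.

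First I would specialize \ref{lem} to $X=\partial_t\mu_t$, so that the claimed equation becomes the assertion that
\[
	(\partial_t\mu_t)(\alpha,\beta)+H_1(\partial_t\mu_t)([\alpha,\beta])-[H_1(\partial_t\mu_t)(\alpha),\beta]-[\alpha,H_1(\partial_t\mu_t)(\beta)]=0
\]
at every $\mu_t$. Evaluating this at a point $\mu=\mu_t$, writing $x=\alpha(\mu_t)$, $y=\beta(\mu_t)$ and $A:=h_1^{\mu_t}(\partial_t\mu_t)=H_1(\partial_t\mu_t)(\mu_t)\in C^1(\g)$, and using that the tautological bracket at $\mu_t$ is $\mu_t(-,-)$ — so that $[\alpha,\beta](\mu_t)=\mu_t(x,y)$ — the left-hand side reduces to the concrete expression
\[
	(\partial_t\mu_t)(x,y)+A(\mu_t(x,y))-\mu_t(A(x),y)-\mu_t(x,A(y))
\]
in $C^2(\g)$. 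The point is then to recognize the last three terms: a short computation (differentiating $g\mapsto\mu_t\cdot g$ at $g=e$) gives $d_e m_{\mu_t}(A)(x,y)=\mu_t(A(x),y)+\mu_t(x,A(y))-A(\mu_t(x,y))$, so the displayed expression equals $(\partial_t\mu_t)(x,y)-d_e m_{\mu_t}(A)(x,y)$.

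It then remains to show $d_e m_{\mu_t}(h_1^{\mu_t}(\partial_t\mu_t))=\partial_t\mu_t$, and for this I would invoke two facts already available. Since $\mu_t$ is a deformation of $\mu_0$ inside $Lie(\g)=Jac^{-1}(0)$, Proposition \ref{Tsigma} gives $\partial_t\mu_t\in\ker d_{\mu_t}Jac$; and since $\mu_t\in Lie(\g)\cap U$, the pair $h_1^{\mu_t},h_2^{\mu_t}$ are homotopy operators for the sequence \ref{laseq2}, so $d_e m_{\mu_t}\circ h_1^{\mu_t}+h_2^{\mu_t}\circ d_{\mu_t}Jac=\mathrm{Id}$. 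Applying this identity to $\partial_t\mu_t$ and using $d_{\mu_t}Jac(\partial_t\mu_t)=0$ yields $d_e m_{\mu_t}(h_1^{\mu_t}(\partial_t\mu_t))=\partial_t\mu_t$. Hence the concrete expression above equals $(\partial_t\mu_t)(x,y)-(\partial_t\mu_t)(x,y)=0$; since $x,y$ are the values at $\mu_t$ of arbitrary sections, the right-hand side of \ref{lem} vanishes identically along $\mu_t$, which is exactly the claim.

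The only genuinely delicate step is the translation in the second paragraph: unwinding the abstract right-hand side of \ref{lem} into an explicit element of $C^2(\g)$ and matching it against the formula for the infinitesimal action $d_e m_{\mu_t}\colon C^1(\g)\to C^2(\g)$. Everything else is formal — it is just the homotopy identity for \ref{laseq2} evaluated on the tangent vector to the deformation, combined with the defining property of a deformation.
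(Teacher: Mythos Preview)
Your proposal is correct and follows essentially the same approach as the paper's proof: both specialize the Lemma to $X=\partial_t\mu_t$, use Proposition~\ref{Tsigma} together with the homotopy identity for the sequence~\ref{laseq2} to obtain $d_e m_{\mu_t}(h_1^{\mu_t}(\partial_t\mu_t))=\partial_t\mu_t$, and then identify the right-hand side of~\ref{lem} with $(\partial_t\mu_t)-d_e m_{\mu_t}(h_1^{\mu_t}(\partial_t\mu_t))$ via the explicit formula for the infinitesimal action. Your write-up is somewhat more explicit about the pointwise unwinding, but the argument is the same.
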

The deformation $\mu_t$ is called \textit{trivial} if $(\g,\mu_t)\cong (\g,\mu_0)$ for all $t$. 
\begin{coro}
If $H^1(\mu_0)=0$ then every (small) deformation $\mu_t$ of $\mu_0$ is trivial.
\begin{proof}
$\nabla_{\partial_t \mu_t}$ is a derivation of $[-,-]$ and $[-,-](\mu_t)=\mu_t$. Hence, the parallel transport $P_{\mu}^{t,0}(\nabla): \g\rightarrow \g$ along $\mu_t$ gives a Lie algebra isomorphism between $(\g,\mu_0)$ and $(\g,\mu_t)$. 
\end{proof}
\end{coro}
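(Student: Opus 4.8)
The plan is to manufacture, for each small $t$, a Lie algebra isomorphism $(\g,\mu_0)\cong(\g,\mu_t)$ out of the parallel transport of the connection $\nabla$ built above, so that the deformation is trivial by definition. First I would pin down where the hypothesis is used: since $\g$ is finite dimensional and $H^1(\mu_0)=0$, Proposition \ref{hom} supplies homotopy operators $h_1^{\mu_0},h_2^{\mu_0}$ for the complex \ref{rigidseq} at $\mu_0$, and these are precisely the data entering $H_1,H_2$ and hence the definition of $\nabla$ on the bundle $(U\times\g\to U,[-,-])$ over a suitable neighbourhood $\mu_0\in U$. Here ``small'' means that the deformation $\mu_t$ stays inside $U$.

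Next I would invoke the preceding Proposition: for a deformation $\mu_t\colon I\to U\cap Lie(\g)$, the operator $\nabla_{\partial_t\mu_t}$ is a derivation of the tautological bracket, that is $\nabla_{\partial_t\mu_t}[\alpha,\beta]=[\nabla_{\partial_t\mu_t}\alpha,\beta]+[\alpha,\nabla_{\partial_t\mu_t}\beta]$ for all sections $\alpha,\beta$. The crucial extra ingredient is the tautological identity $[-,-](\mu)=\mu$: along the curve, the fibrewise bracket is literally the Lie algebra structure we wish to trivialise.

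Then comes the standard step that a connection which is a bracket-derivation along a curve has bracket-preserving parallel transport. Write $P^{t,0}:=P_{\mu}^{t,0}(\nabla)\colon\g\to\g$ for parallel transport along $\mu_t$. Given $a_0,b_0\in\g$, extend them to sections $\alpha,\beta$ of $U\times\g$ that are $\nabla$-parallel along $\mu_t$ (only their values along the curve enter, so such extensions exist), so that $\alpha(\mu_t)=P^{t,0}a_0$ and $\beta(\mu_t)=P^{t,0}b_0$. Since $\nabla_{\partial_t\mu_t}\alpha=\nabla_{\partial_t\mu_t}\beta=0$, the derivation property forces $\nabla_{\partial_t\mu_t}[\alpha,\beta]=0$, so $[\alpha,\beta]$ is also parallel along $\mu_t$ and $[\alpha,\beta](\mu_t)=P^{t,0}\bigl([\alpha,\beta](\mu_0)\bigr)$. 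Unwinding $[\alpha,\beta](\mu)=\mu(\alpha(\mu),\beta(\mu))$ yields $\mu_t(P^{t,0}a_0,P^{t,0}b_0)=P^{t,0}(\mu_0(a_0,b_0))$, i.e. $P^{t,0}$ is a Lie algebra homomorphism $(\g,\mu_0)\to(\g,\mu_t)$; being invertible it is an isomorphism. Hence $(\g,\mu_t)\cong(\g,\mu_0)$ for all small $t$, which is exactly the statement that $\mu_t$ is trivial.

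I do not expect a genuine obstacle: the analytic content sits entirely in the preceding Proposition (the derivation property of $\nabla$ along a deformation) and in the bookkeeping identity $[-,-](\mu)=\mu$. The only points needing a little care are (i) that $\nabla$ exists at all, which is precisely where $H^1(\mu_0)=0$ is used, through the homotopy operators; and (ii) the elementary fact that parallel transport of a derivation-compatible connection preserves the bracket, which I would either isolate as a one-line lemma or simply spell out as above.
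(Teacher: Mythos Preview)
Your proposal is correct and follows exactly the paper's approach: invoke the preceding Proposition to get that $\nabla_{\partial_t\mu_t}$ is a derivation of $[-,-]$, combine with the tautological identity $[-,-](\mu_t)=\mu_t$, and conclude that parallel transport along $\mu_t$ is a Lie algebra isomorphism. The only difference is that you spell out explicitly the standard fact that a bracket-derivation connection has bracket-preserving parallel transport, which the paper simply asserts.
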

Indeed, the parallel transport can be thought of as a smooth function $P_\mu(\nabla): I\rightarrow GL(\g)$ and the Lie algebra isomorphism $(\g, \mu_0)\cong (\g,\mu_t)$ is given by the change of coordinates induced by the parallel transport, i.e.
\[
	\mu_0(x,y)= \left(P^{t,0}_\mu(\nabla)\right)^{-1} \mu_t( P_\mu^{t,0}(\nabla)(x), P_\mu^{t,0}(\nabla)(y)). 
\]
In other words 
\begin{equation}\label{abc}
\mu_0= \mu_t\cdot P_{\mu}^{t,0}(\nabla).
\end{equation} 
\begin{thm}\label{rigidityPT}
If $H^1(\mu_0)=0$ then $\mu_0$ is rigid. 
\begin{proof}
The space of Lie algebra structures $Lie(\g)$ is a quadratic affine algebraic variety. Because of \cite[Theorem 9.3.6]{BCR1998} we have that it is locally path connected. 
 
This fact, together with \ref{abc}, implies the result.
\end{proof}
\end{thm}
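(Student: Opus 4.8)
The plan is to peel off the definition of rigidity and reduce the statement to one about \emph{deformations}, to which the previous Corollary already applies; the actual work is then in realising nearby Lie brackets as endpoints of deformations. Recall that $\mu_0$ is rigid exactly when there is an open $\mu_0\in U\subset C^2(\g)$ with $GL(\g)(\mu_0)\cap U=Lie(\g)\cap U$, and that $GL(\g)$-equivariance of $Jac$ already gives $GL(\g)(\mu_0)\subset Lie(\g)$; hence it suffices to produce one (small) open $U$ with $Lie(\g)\cap U\subset GL(\g)(\mu_0)$. First I would fix the open set $\mu_0\in U_0\subset C^2(\g)$ on which the perturbed operators $h_1^\mu,h_2^\mu$ of \cite{crainic2004perturbation} are defined, so that the previous Corollary applies to \emph{every} deformation contained in $Lie(\g)\cap U_0$: if $\mu_t\colon I\to Lie(\g)\cap U_0$ is a deformation of $\mu_0$ then it is trivial, and by \ref{abc} its endpoint is $\mu_1=\mu_0\cdot\big(P_{\mu}^{1,0}(\nabla)\big)^{-1}\in GL(\g)(\mu_0)$. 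So every point of $Lie(\g)\cap U_0$ joined to $\mu_0$ by a deformation lying in $Lie(\g)\cap U_0$ is automatically in the orbit.

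The remaining, and main, step is to exhibit each sufficiently close $\mu\in Lie(\g)$ as such an endpoint. Here I would use that $Lie(\g)=Jac^{-1}(0)$, being cut out by the (quadratic) components of $Jac$ on the finite-dimensional space $C^2(\g)$, is a closed real algebraic set, in particular a closed semialgebraic set. By the local conic structure of closed semialgebraic sets \cite[Theorem 9.3.6]{BCR1998}, $\mu_0$ admits arbitrarily small open neighbourhoods $U_1$ with $\overline{U_1}\subset U_0$ such that $Lie(\g)\cap\overline{U_1}$ is semialgebraically a cone with vertex $\mu_0$; in particular every $\mu\in Lie(\g)\cap U_1$ is joined to $\mu_0$ by a continuous semialgebraic path whose image lies in $Lie(\g)\cap U_0$. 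Such a path need not be smooth, but it can be upgraded: its image is a compact semialgebraic curve with only finitely many singular points, so one can traverse it by a $C^\infty$ path with the same endpoints, coming to rest (with vanishing derivatives) whenever passing through a singular point --- equivalently, reparametrise the given path by arclength to a Lipschitz path and check that the parallel-transport argument behind the previous Corollary carries over to absolutely continuous deformations. Either way one obtains a genuine deformation $\mu_t\colon I\to Lie(\g)\cap U_0$ from $\mu_0$ to $\mu$, whence $\mu\in GL(\g)(\mu_0)$ by the first paragraph. Thus $Lie(\g)\cap U_1\subset GL(\g)(\mu_0)$, and together with the automatic reverse inclusion $GL(\g)(\mu_0)\cap U_1\subset Lie(\g)\cap U_1$ this yields $GL(\g)(\mu_0)\cap U_1=Lie(\g)\cap U_1$, i.e. $\mu_0$ is rigid.

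I expect the main obstacle to be exactly this bridge between ``points of $Lie(\g)$ near $\mu_0$'' and ``endpoints of deformations''. Two features must be handled simultaneously: (i) $\mu$ must be connected to $\mu_0$ \emph{within} $Lie(\g)$ by a path that stays inside the perturbation neighbourhood $U_0$, which is why genuine local path-connectedness of $Lie(\g)$ --- and hence the semialgebraic input \cite[Theorem 9.3.6]{BCR1998}, rather than mere connectedness --- is indispensable; and (ii) the a priori only continuous (semialgebraic) connecting path must be promoted to one regular enough (smooth, or at least absolutely continuous) for the parallel transport in the previous Corollary to make sense, without leaving $Lie(\g)\cap U_0$. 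Once a deformation is in hand, the connection $\nabla$ and the previous Corollary supply the trivialising isomorphism with no further work.
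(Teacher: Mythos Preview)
Your approach is essentially the same as the paper's: both invoke \cite[Theorem 9.3.6]{BCR1998} to obtain local path connectedness of $Lie(\g)$ and then feed the resulting path into the parallel-transport identity \ref{abc} to land in the orbit. The paper's proof is extremely terse and leaves implicit exactly the point you single out as the ``main obstacle'' --- upgrading the a priori only continuous (semialgebraic) connecting path to one regular enough for $\nabla$-parallel transport --- so your version is in fact a more honest fleshing-out of the same argument rather than a different one.
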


\section{An introduction to $L_\infty$-algebras}
In this section we follow chapter $2$ of \cite{baarsma2019deformations}. 

\subsection{The graded symmetric algebra}
Let $V=\bigoplus_{i\in \mathbb{Z}} V_i$ be a graded vector space. An element $v\in V$ is called \textit{homogeneous} if $v\in V_k$ for some $k\in\Z$. Its \textit{degree} is denoted by $\grau{v}:=k$. The tensor algebra of $V$ can be decomposed by degree and rank via
\[
		\bigotimes V= \bigoplus_{k\in \mathbb{N}} \ \bigoplus_{i\in\mathbb{Z}} \ \bigoplus_{n_1+\ldots +n_k=i}  \otimes_{j=1}^k V_{n_j},
\]
with $k$ the rank and $i$ the degree. Accordingly, $(\bigotimes V, \otimes)$ becomes a graded algebra with respect to the degree. Let $\mathcal{I}$ be the graded ideal generated by 
\[
	\mathcal{I}:=\langle u\otimes v- (-1)^{\grau{u}\grau{v}} v\otimes u \mid u,v\in V \text{ homogeneous} \rangle.
\]	
The quotient $S(V):= \bigotimes V/ \mathcal{I}$  has the induced algebra structure $\odot$ from $\otimes$ and is called the \textit{graded symmetric algebra of} $V$. It is useful to introduce a sign rule that takes care of these symmetries.
\begin{dfn}
Let $\sigma\in S_k$ be a permutation and $v_1,\ldots, v_k\in V$ homogeneous elements. The \textit{Koszul sign rule} is given by 
\[
	\epsilon_\sigma(v_1, \ldots, v_k):= \prod_{\substack{i<j \\ \sigma(i)> \sigma(j)}} (-1)^{\grau{v_i}\grau{v_j}}.
\]
\end{dfn}
We would write $\epsilon_\sigma$ when there is no risk of confusion. The sign rule is such that 
\[
	v_{\sigma(1)}\odot \cdots \odot v_{\sigma(k)}= \epsilon_\sigma \ v_1\odot \cdots \odot v_k.
\]
\subsection{Plurilinear maps}
Let $V$ and $W$ be two graded vector spaces. 
\begin{dfn}
A linear map $f: \bigotimes^k V\rightarrow W$ is called \textit{graded symmetric} if, for any two consecutive indices $i$ and $i+1$, we have that
\[
	f(v_1, \ldots,v_i, v_{i+1}, \ldots, v_k)= (-1)^{\grau{v_i}\grau{v_{i+1}}} f(v_1, \ldots,v_{i+1}, v_{i}, \ldots, v_k).
\]
$f$ is called \textit{homogeneous of degree} $d\in \Z$ if, for every $k$-tuple of homogeneous elements $v_1, \ldots, v_k\in V$, we have that
\[
	f(v_1,\ldots, v_k)\in W_{\grau{v_1}+ \cdots + \grau{v_k}+d}.
\]
\end{dfn}
Let $Lin\left( S^k(V), W\right)_d$ be the vector space of graded symmetric $k$-multilinear maps of degree $d$. 
\begin{dfn}
The space of \textit{plurilinear maps of degree} $d$ is the vector space given by
\[
	Lin\left( S(V), W\right)_d:= \prod_{k\in \N}  Lin\left( S^k(V), W\right)_d.
\]
\end{dfn}
An element $\ell\in Lin\left( S(V), W\right)_d$ is given by a sequence $(\ell_k)_{k\in\N}$ such that $\ell_k\in Lin\left( S^k(V), W\right)_d$. In particular $\ell_0: \R\rightarrow V_d$ is identified with its image $\ell_0(1)\in V_d$.
\subsection{$L_\infty$-algebras}
Recall that a permutation $\sigma\in S_{p+q}$ is a $(p,q)$-\textit{unshuffle} if $\sigma(1)<\cdots< \sigma(p)$ and $\sigma(p+1)<\cdots< \sigma(p+q)$. We denote the \textit{space of $(p,q)$-unshuffles} by $S_{p,q}$.
\begin{dfn}
The map $Jac_n: Lin\left( S(V), W\right)_d\rightarrow Lin\left( S^n(V), W\right)_{2d}$, given by 
\begin{multline}\label{jacobi}
	Jac_n(\ell)(v_1,\ldots, v_n)=  \\ 
	\sum_{\substack{i+j=n+1 \\ i\leq j}}\ \sum_{\sigma\in S_{i, j-1} } \epsilon_\sigma \ell_j\left( \ell_i\left(v_{\sigma(1)}, \ldots, v_{\sigma(i)} \right), v_{\sigma(i+1)}, \ldots, v_{\sigma(n)} \right),
\end{multline}
is called the $n$-\textit{Jacobiator}.
\end{dfn}
\begin{exm}
Let $\ell\in  Lin\left( S(V), W\right)_1$ be such that $\ell_k=0$ for $k\neq 1,2$. Then:
\begin{enumerate}
\item  $Jac_1(\ell)=0$ if and only if $(V,\ell_1)$ is a cochain complex. 
\item  $Jac_3(\ell)=0$ if and only if $(V,\ell_2)$ is a graded Lie algebra. 
\item  $Jac_1(\ell)=Jac_2(\ell)=Jac_3(\ell)=0$ if and only if $(V,\ell_1,\ell_2)$ is a differential graded Lie algebra. 
\end{enumerate}
\end{exm}
\begin{dfn}
A \textit{curved} $L_\infty$-\textit{algebra structure on} $V$ is a plurilinear map of degree one $\ell\in Lin\left( S(V), W\right)_1$ such that $Jac_n(\ell)=0$ for all $n\in \N$. Its \textit{curvature} is the element $\ell_0\in V_1$. An $L_\infty$-\textit{algebra} is a curved $L_\infty$-algebra with zero curvature.
\end{dfn}
\begin{rmk}
$L_\infty$-algebras were introduced in \cite{lada1992introduction}. The definition given in this paper is usually encountered as $L_\infty[1]$-algebra structures. Using the shifted graded vector space $(V[1])_d:= V_{d+1}$ and the décalage isomorphism, one can show that the definitions in \textit{loc. cit.} and in this paper are equivalent.  A textbook account of the use of $L_\infty$-algebras in deformation theory can be found in \cite{Manetti2022}.
\end{rmk}

\subsection{Twisted $L_\infty$-algebras and the Maurer-Cartan equation}
Let $(V,\ell)$ be an $L_\infty$-algebra and $u\in V_0$. 
\begin{dfn}
The \textit{twisting} of $\ell$ by $u$ is the graded map $\ell^u\in Lin( S(V), V)_1$ given by 
\begin{equation*}
	\ell^u_p(v_1,\ldots, v_p):= \sum_{k\in\N} \dfrac{1}{k!}\ell_{p+k}(\odot^k u\odot v_1\odot\cdots\odot v_p). 
\end{equation*}
\end{dfn}
Let $C_\ell=\lbrace u\in V_0\mid \ell^u \text{ converges}\rbrace$. 
\begin{dfn}
The \textit{domain of convergence} of a curved $L_\infty$-algebra $(V,\ell)$ is the interior of $C_\ell$. We denote it by $D_\ell:= Int(C_\ell)$. 
\end{dfn}
The following result is given by Propositions $2.4.3$ and $2.4.4$ of \cite{baarsma2019deformations}.
\begin{prop}\label{derivativeproperty}
For every curved $L_\infty$-algebra $(V,\ell)$, the map 
\[
	\mathfrak{l}: D_\ell\rightarrow Lin( S(V),V)_1 , \ \ \ u\mapsto \ell^u,
\]
is real analytic and its derivatives are given by 
\begin{equation*}
	D^k\mathfrak{l} (u)(\odot^k \dot{u})(v_1, \ldots, v_p)= \ell^u_{k+p}(\odot^k \dot{u}\odot v_1\odot\cdot\odot v_p).
\end{equation*}
Moreover, $(V,\mathfrak{l}(u))$ is a curved $L_\infty$-algebra.
\end{prop}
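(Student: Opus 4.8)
\emph{Strategy and the analytic part.} The statement bundles three assertions — real analyticity of $\mathfrak{l}$ on $D_\ell$, the formula for $D^k\mathfrak{l}(u)$, and the fact that $(V,\mathfrak{l}(u))=(V,\ell^u)$ is again a curved $L_\infty$-algebra — and I would prove the first two together and the third separately. Throughout, the bookkeeping is lightened by the fact that $u\in V_0$ has degree zero, so inserting copies of $u$ produces no Koszul signs. For the analytic part, since $Lin(S(V),V)_1=\prod_{p\in\N}Lin(S^p(V),V)_1$ carries the product structure, it suffices to argue componentwise: for fixed $p$ and homogeneous $v_1,\dots,v_p$ one examines $u\mapsto\ell^u_p(v_1,\dots,v_p)=\sum_{k\geq 0}\frac{1}{k!}\ell_{p+k}(\odot^k u\odot v_1\odot\cdots\odot v_p)$, whose $k$-th term is a homogeneous polynomial of degree $k$ in $u$; this is a genuine power series, and around any $u\in D_\ell=Int(C_\ell)$ it converges on a ball contained in $C_\ell$, hence is real analytic there with derivatives obtained by differentiating term by term. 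Differentiating once and reindexing $j=k-1$ yields $D\mathfrak{l}(u)(\dot u)(v_1,\dots,v_p)=\ell^u_{p+1}(\dot u\odot v_1\odot\cdots\odot v_p)$, the claimed formula for $k=1$; the general $k$ then follows by induction, or by differentiating the relation $\mathfrak{l}(u+w)=\mathfrak{l}(u)^{w}$, which falls out of the (sign-free) binomial expansion of $\odot^k(u+w)$.

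\emph{Twisting preserves the curved $L_\infty$ condition.} The remaining point is the identity
\[
Jac_n(\ell^u)(v_1,\dots,v_n)=\sum_{k\geq 0}\frac{1}{k!}\,Jac_{n+k}(\ell)\bigl(\odot^k u\odot v_1\odot\cdots\odot v_n\bigr),\qquad n\in\N,
\]
from which the curved $L_\infty$ relations for $\ell^u$ are immediate: $\ell$ is a curved $L_\infty$-algebra, so the right-hand side vanishes identically, and therefore $Jac_n(\ell^u)=0$ for all $n$ — with curvature $\ell^u_0=\sum_{k\ge 0}\frac1{k!}\ell_k(\odot^k u)$, the Maurer--Cartan function evaluated at $u$. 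To establish the identity one expands both sides of the Jacobiator formula \eqref{jacobi}: on the left each $\ell^u_i$ and $\ell^u_j$ contributes its own power series in $u$, and one groups terms by the total number $k$ of copies of $u$; on the right one re-sums over the distribution of those $k$ copies between the inner slot of $\ell_i$ and the outer slots of $\ell_j$, paired with the $(i,j-1)$-unshuffles of $v_1,\dots,v_n$. Since $|u|=0$ no signs appear, so matching the two sides reduces to a purely combinatorial identity between unshuffle and binomial sums, of exactly the kind assembled in Appendix~A.

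\emph{Main obstacle.} The genuinely technical step is this combinatorial matching: it is elementary but delicate in the indexing of unshuffles and the $1/k!$ normalizations, and it is where I would concentrate the effort. Conceptually it expresses that twisting corresponds to conjugating the codifferential on the symmetric coalgebra $S(V)$ by the automorphism ``translation by $u$'', so that $Jac(\ell)=0$ is transported to $Jac(\ell^u)=0$; since that formalism is not developed in the text, I would carry out the explicit computation instead. The analytic half, by contrast, is routine once one reduces it, component by component, to a convergent power series on $D_\ell$.
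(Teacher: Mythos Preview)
The paper does not actually prove this proposition: it is stated with the attribution ``The following result is given by Propositions 2.4.3 and 2.4.4 of \cite{baarsma2019deformations}'' and no argument is supplied. Your proposal therefore goes beyond what the paper does, and the sketch you give is essentially the standard direct proof: componentwise analyticity of a convergent power series on the interior of its domain of convergence, termwise differentiation for the derivative formula (aided by the cocycle identity $\mathfrak{l}(u+w)=\mathfrak{l}(u)^w$), and the twisted Jacobiator identity $Jac_n(\ell^u)=\sum_{k\ge 0}\tfrac{1}{k!}Jac_{n+k}(\ell)(\odot^k u\odot -)$ for the curved $L_\infty$ claim. This is correct and is exactly the approach one finds in Baarsma's thesis (or, as you note, via the codifferential picture on $S(V)$).

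One small caveat: your appeal to ``exactly the kind assembled in Appendix~A'' is a stretch --- that appendix treats super-Catalan numbers and partition bookkeeping for the obstruction formulas, not the unshuffle/binomial matching needed for the twisted Jacobiator. The combinatorics you actually need is the bijection between $(i+a,\,j-1+b)$-unshuffles of $(u,\dots,u,v_1,\dots,v_n)$ and pairs consisting of a choice of how many $u$'s go inside versus outside together with an $(i,j-1)$-unshuffle of $(v_1,\dots,v_n)$; this is elementary but you should either write it out or cite it directly rather than pointing to Appendix~A.
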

\begin{coro}\label{derivative}
For every smooth path $u_t\subset D_\ell$ we have that 
\begin{multline*}
	\partial_t\ell_i^{u_t} (\partial_t^{r_1} u_t \odot\cdots\odot \partial_t^{r_i} u_t) =\ell_{i+1}^{u_t}( \partial_t u_t\odot \partial_t^{r_1}u_t\odot\cdots\odot \partial_t^{r_i} u_t)\\
	+\sum_{j=1}^i \ell_i^{u_t}(\partial_t^{r_1} u_t\odot\cdots\odot \partial_t^{r_j+1} u_t\odot\cdots\odot \partial_t^{r_i} u_t).
\end{multline*}
\end{coro}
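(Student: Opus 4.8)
The plan is to reduce the identity to the ordinary chain rule, with Proposition~\ref{derivativeproperty} supplying the one nontrivial ingredient, namely the $t$-derivative of the superscript. Fix the exponents $r_1,\ldots,r_i\ge 1$ and introduce the auxiliary function of $i+1$ real variables
\[
	G(s,t_1,\ldots,t_i):=\ell^{u_s}_i\big(\partial_t^{r_1} u_{t_1}\odot\cdots\odot \partial_t^{r_i} u_{t_i}\big),
\]
which is well defined for $s,t_1,\ldots,t_i$ ranging in $I$, since $u_t$ takes values in the \emph{open} set $D_\ell$ and all the derivatives $\partial_t^{r_j}u_{t_j}$ make sense there. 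The quantity to be differentiated is precisely the restriction to the diagonal, $F(t):=G(t,\ldots,t)$, so the chain rule gives
\[
	\partial_t F(t)=\partial_s G\big|_{\mathrm{diag}}+\sum_{j=1}^i \partial_{t_j} G\big|_{\mathrm{diag}},
\]
and it remains to identify the two kinds of partial derivatives.

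For the variable $t_j$: since $\ell^{u_s}_i$ is multilinear in its $i$ arguments and $\partial_{t_j}\big(\partial_t^{r_j} u_{t_j}\big)=\partial_t^{r_j+1} u_{t_j}$, differentiating in $t_j$ and then restricting to the diagonal yields
\[
	\partial_{t_j} G\big|_{\mathrm{diag}}=\ell^{u_t}_i\big(\partial_t^{r_1} u_t\odot\cdots\odot \partial_t^{r_j+1} u_t\odot\cdots\odot \partial_t^{r_i} u_t\big),
\]
and summing over $j$ produces the second line of the claimed formula. No Koszul signs appear because every argument $\partial_t^{r_j} u_t$ lies in $V_0$, so all transpositions involved have trivial sign.

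For the variable $s$: with $w_1,\ldots,w_i$ fixed, the map $s\mapsto \ell^{u_s}_i(w_1\odot\cdots\odot w_i)$ is the composition of the smooth path $s\mapsto u_s\subset D_\ell$, the real-analytic map $\mathfrak{l}\colon D_\ell\to Lin(S(V),V)_1$ of Proposition~\ref{derivativeproperty}, and the (bounded, linear) evaluation $\ell\mapsto \ell_i(w_1\odot\cdots\odot w_i)$. Hence $\partial_s \ell^{u_s}_i(w_1\odot\cdots\odot w_i)=\big(D\mathfrak{l}(u_s)(\partial_s u_s)\big)_i(w_1\odot\cdots\odot w_i)$, and substituting the $k=1$ case of the derivative formula in Proposition~\ref{derivativeproperty} turns this into $\ell^{u_s}_{i+1}\big(\partial_s u_s\odot w_1\odot\cdots\odot w_i\big)$. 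Evaluating at $s=t$ with $w_j=\partial_t^{r_j}u_t$ gives the first term on the right-hand side, and adding the two contributions finishes the proof.

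The only point that needs care — and the one I would state explicitly before the computation — is the interchange of $\partial_s$ with evaluation on fixed arguments, i.e. the continuity of the evaluation maps on $Lin(S(V),V)_1=\prod_k Lin(S^k(V),V)_1$; once this is granted (each evaluation being linear into the fixed vector space $V$), the rest is a mechanical application of the chain rule and multilinearity, with no further analytic input beyond Proposition~\ref{derivativeproperty}.
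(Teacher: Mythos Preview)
Your proof is correct and follows essentially the same approach as the paper: the paper's proof is the single line ``write $\ell_i^{u_t}(\partial_t^{r_1}u_t\odot\cdots\odot\partial_t^{r_i}u_t)$ as $\mathfrak{l}(u_t)(\partial_t^{r_1}u_t\odot\cdots\odot\partial_t^{r_i}u_t)$ and use the chain rule,'' and you have simply unpacked that chain-rule computation explicitly by separating the superscript variable from the argument variables. Your restriction $r_j\ge 1$ is unnecessary (the same computation works verbatim for $r_j=0$), but otherwise there is nothing to add.
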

\begin{proof} We can write $\ell_i^{u_t} (\partial_t^{r_1} u_t \odot\cdots\odot \partial_t^{r_i} u_t)$ as $\mathfrak{l}(u_t) (\partial_t^{r_1} u_t \odot\cdots\odot \partial_t^{r_i} u_t)$. Using the chain rule the result follows.
\end{proof}

The \textit{Maurer-Cartan equation} measures how far the twisted \textit{curved} $L_\infty$-algebra $\ell^u$ is from being an $L_\infty$-algebra. Explicitly, $MC: D_\ell\subset V_0\rightarrow V_1$ is defined by $MC(u):= \ell_0^u$.  
\begin{dfn}
$u\in D_\ell$ is a \textit{Maurer-Cartan element} if $u\in\ MC^{-1}(0)$. Explicitly  
\begin{equation*}
	MC(u)= \ell^u_0= \sum_{k\in\N} \dfrac{1}{k!}\ell_{k}(\odot^k u)=0. 
\end{equation*}
We denote by $MC(V,\ell)$ the set of Maurer-Cartan elements of $(V, \ell)$. 
\end{dfn}

\section{Deformations of Maurer-Cartan elements and obstructions} 

\subsection{Deformations and obstructions}

Let $(V,\ell)$ be an $L_\infty$-algebra and let $u_0\in MC(V,\ell)$.

\begin{dfn}
 A \textit{deformation of} $u_0$ is a smooth path $u_t\subset D_\ell$ that starts at $u_0$ and satisfies $MC(u_t)=0$. 
\end{dfn}
We want to find obstructions to the existence of deformations $u_t$ of $u_0$. Let $u_t\subset D_\ell$ be any smooth path starting at $u_0$.
Natural necessary conditions for $MC(u_t)=0$ to hold are the differential consequences of this equation, namely $\partial_{t=0}^k MC(u_t)=0$. Denote $\partial_{t=0}^k u_t= u_0^k$ and use the notation $\vec{r}_{i}=k$ of Appendix \ref{appendix-partition} for partitions of a number.

\begin{prop}\label{dMC1}
	For every $k\in\N$ we have the equation  
		\[
			\partial_{t=0}^{k} MC(u_t) =\ell_1^{u_0}(u_0^{k})+\sum_{i=2}^{k} \sum_{\vec{r}_{i}=k} \binom{k}{\vec{r}_{i}} \dfrac{1}{i!} \ \ell_{i}^{u_0}(u_0^{r_1} \odot \cdots\odot u_0^{r_i}).
		\]
	\begin{proof}
	Since $MC(u_t)= \ell^{u_t}_0$ then $\partial_t (MC(u_t))= \ell^{u_t}_1(\partial_t u_t)$. We now make use of Proposition \ref{PropAuxB}, taking $v_t=\partial_t u_t$, to conclude that 
		\begin{multline*}
			\partial_{t=0}^{k} MC(u_t) =\ell_1^{u_0}(u_0^{k})+ \\
			\sum_{i=1}^{k-1} \sum_{j=0}^{k-1-i}\sum_{\vec{r}_{i}=k-1-j} \binom{k-1}{\vec{r}_{i}} \dfrac{1}{i!j!}\ \ell^{u_0}_{i+1}(u_0^{r_1}\odot\cdots\odot u_0^{r_{i}}\odot u_0^{j+1}).
		\end{multline*}	
	Reorganizing the terms we get 
		\[
			\partial_{t=0}^{k} MC(u_t)=\ell_1^{u_0}(u_0^{k})+\sum_{i=2}^k \sum_{\vec{r}_i=k} \binom{k-1}{\vec{r}_i}\dfrac{r_i}{(i-1)!} \ \ell_i^{u_0}(u_0^{r_1}\odot \cdots \odot u_0^{r_i}).
		\]
	Moreover, for every pair of summands $r_p$ and $r_q$ of $r_1+\cdots +r_i=k$, we have that 
	\[
		\sum_{\vec{r}_i=k} \binom{k-1}{\vec{r}_i}\dfrac{r_p}{(i-1)!} \ \ell_i^{u_0}(u_0^{r_1}\odot \cdots \odot u_0^{r_i})=\sum_{\vec{r}_i=k} \binom{k-1}{\vec{r}_i}\dfrac{r_q}{(i-1)!} \ \ell_i^{u_0}(u_0^{r_1}\odot \cdots \odot u_0^{r_i}).
	\]
	Thus
	\[ 
		\begin{split}
		 & i \cdot \sum_{\vec{r}_i=k} \binom{k-1}{\vec{r}_i}\dfrac{r_i}{(i-1)!} \ \ell_i^{u_0}(u_0^{r_1}\odot \cdots \odot u_0^{r_i}) \\
		& =\sum_{\vec{r}_i=k} \binom{k-1}{\vec{r}_i}\dfrac{r_1+\cdots +r_i}{(i-1)!} \ \ell_i^{u_0}(u_0^{r_1}\odot \cdots \odot u_0^{r_i}) \\
		& =\sum_{\vec{r}_i=k} \binom{k}{\vec{r}_i}\dfrac{1}{(i-1)!} \ \ell_i^{u_0}(u_0^{r_1}\odot \cdots \odot u_0^{r_i}), \\
		\end{split}
	\]
and the result follows. 
\end{proof}
\end{prop}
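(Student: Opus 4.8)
The plan is to compute the $k$-th derivative of the Maurer--Cartan map $MC(u_t)=\ell_0^{u_t}$ at $t=0$ by iterating the differentiation rule for twisted brackets and then collecting the resulting terms by arity. The first step is a reduction: by Corollary~\ref{derivative} applied with $i=0$ one has $\partial_t MC(u_t)=\ell_1^{u_t}(\partial_t u_t)$, so it remains to compute $\partial_{t=0}^{k-1}$ of $\ell_1^{u_t}(v_t)$ for $v_t:=\partial_t u_t$. Each additional $\partial_t$ acts through Corollary~\ref{derivative}: it either raises the twisting point, turning $\ell_m^{u_t}$ into $\ell_{m+1}^{u_t}$ with one extra slot filled by $\partial_t u_t$, or it differentiates an already present argument, $\partial_t^{r}u_t\mapsto\partial_t^{r+1}u_t$. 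Iterating this bookkeeping is exactly the higher-derivative computation carried out in Appendix~B (Proposition~\ref{PropAuxB}); applying it to $v_t=\partial_t u_t$ and evaluating at $t=0$ should give
\[
\partial_{t=0}^{k}MC(u_t)=\ell_1^{u_0}(u_0^{k})+\sum_{i=1}^{k-1}\sum_{j=0}^{k-1-i}\sum_{\vec r_i=k-1-j}\binom{k-1}{\vec r_i}\frac{1}{i!\,j!}\,\ell_{i+1}^{u_0}\bigl(u_0^{r_1}\odot\cdots\odot u_0^{r_i}\odot u_0^{j+1}\bigr).
\]

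The second step is to rewrite this triple sum in the stated form. I would re-index by the arity of the bracket, setting $i'=i+1$ and reading $u_0^{j+1}$ as one more argument, so that $(r_1,\dots,r_i,j+1)$ becomes a composition of $k$ into $i'$ parts; assembling the factorials $i!$, $j!$ and the weight $\binom{k-1}{\vec r_i}$ by means of the multinomial identities of Appendix~\ref{appendix-partition}, the sum should collapse to $\sum_{i=2}^{k}\sum_{\vec r_i=k}\binom{k-1}{\vec r_i}\frac{r_i}{(i-1)!}\,\ell_i^{u_0}(u_0^{r_1}\odot\cdots\odot u_0^{r_i})$. Finally I would symmetrize the summand: since $u_0$ has degree $0$, the bracket $\ell_i$ is symmetric in its $i$ arguments with no Koszul signs, and the combinatorial weight $\binom{k-1}{\vec r_i}$ is symmetric in the parts $r_1,\dots,r_i$; as the inner sum ranges over all $\vec r_i=k$, the single asymmetric factor $r_i$ may be replaced by the average $\tfrac1i(r_1+\cdots+r_i)=\tfrac{k}{i}$, and using $\binom{k}{\vec r_i}=k\binom{k-1}{\vec r_i}$ this turns the coefficient into $\binom{k}{\vec r_i}\frac{1}{i!}$, yielding precisely the asserted identity.

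I expect the main obstacle to be purely combinatorial: controlling the index shift $i\mapsto i+1$ together with the absorption of $u_0^{j+1}$ into the argument list so that every composition of $k$ is counted with the correct multiplicity, and reconciling the three separate factorials into a single clean multinomial coefficient. The symmetrization relies on $u_0\in V_0$, which is what makes all signs disappear; and all the manipulations of the a priori infinite twisted sums are legitimate because $u_0\in D_\ell$, so by Proposition~\ref{derivativeproperty} the twisted bracket $\ell^{u_0}$ and all of its derivatives are real-analytic and converge.
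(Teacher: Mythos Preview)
Your proposal is correct and follows essentially the same route as the paper's proof: differentiate $MC$ once via Corollary~\ref{derivative}, apply Proposition~\ref{PropAuxB} with $v_t=\partial_t u_t$, absorb $u_0^{j+1}$ as the last slot to re-index by arity, and then symmetrize the asymmetric factor $r_i$ into $\tfrac{1}{i}(r_1+\cdots+r_i)=\tfrac{k}{i}$ to convert $\binom{k-1}{\vec r_i}$ into $\binom{k}{\vec r_i}$. The only remark is notational: the intermediate quantity $\binom{k-1}{\vec r_i}$ with $\vec r_i=k$ is not a bona fide multinomial coefficient but the shorthand $\tfrac{(k-1)!}{r_1!\cdots r_i!}$, which is exactly how the paper uses it and is what makes your identity $\binom{k}{\vec r_i}=k\binom{k-1}{\vec r_i}$ valid.
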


We will focus on building paths $u_t$ that satisfy this sequence of necessary conditions.

\begin{dfn} A $k$-\textit{deformation of} $u_0$ is a  path $u_t\subset D_\ell$ that starts at $u_0$ and satisfies $\partial_{t=0}^j MC(u_t)=0$ for all $0\leq j\leq k$. It is called $(k+1)$-\textit{extendable} if there exists a $(k+1)$-deformation $v_t$ of $u_0$ such that  $v_0^j=u_0^j$ for all $0\leq j\leq k$.
\end{dfn}

The problem of $(k+1)$-extending a $k$-deformation $u_t$, i.e. finding an appropriate $v_0^{k+1}$, amounts to solving the equation \begin{equation}\label{indeq}
	0 = \ell_1^{u_0} (v^{k+1}_0)+ Obs^{k}(u_{t}),
\end{equation}

where the last term of the equation is the following obstruction.

\begin{dfn}
Let $k\in\N$. The \textit{obstruction to $(k+1)$-extendability} of a $k$-deformation $u_t$ is the element $Obs^{k}(u_{t})\in V_1$ given by 
\begin{equation}\label{obstructions}
	Obs^{k}(u_t)= \sum_{i=2}^{k+1} \sum_{\vec{r}_{i}=k+1} \binom{k+1}{\vec{r}_{i}} \dfrac{1}{i!} \ \ell_{i}^{u_0}(u_0^{r_1} \odot \cdots\odot u_0^{r_i}).
\end{equation}
\end{dfn}

Recall that, by the first Jacobi identity \ref{jacobi},  $(V, \ell_1^{u_0})$ is a cochain complex. 
We will now see that the obstructions are cocycles.
\begin{prop}\label{aux}
If $u_t$ is a $k$-deformation of $u_0$ then
$\ell_1^{u_0}(Obs^{k}(u_{t}))=0$. In particular it defines a class in $H^1(V, \ell^{u_0})$. 
\end{prop}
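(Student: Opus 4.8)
The statement is that for a $k$-deformation $u_t$ of $u_0$, the obstruction $Obs^k(u_t)$ is a cocycle in $(V, \ell_1^{u_0})$, i.e. $\ell_1^{u_0}(Obs^k(u_t)) = 0$. The natural strategy is to exploit Proposition \ref{dMC1}: since $u_t$ is a $k$-deformation, the differential consequences $\partial_{t=0}^j MC(u_t) = 0$ vanish for $0 \le j \le k$, and in particular $\partial_{t=0}^{k+1} MC(u_t) = \ell_1^{u_0}(u_0^{k+1}) + Obs^k(u_t)$ (reading off the formula in Proposition \ref{dMC1} with $k$ replaced by $k+1$ and comparing with the definition of $Obs^k$). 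But this equation by itself only says $Obs^k(u_t) = -\ell_1^{u_0}(u_0^{k+1})$ for a $(k+1)$-deformation, which is not quite what we want — for a mere $k$-deformation we need a direct argument that $\ell_1^{u_0}$ annihilates $Obs^k(u_t)$.

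So the actual approach should be: apply $\ell_1^{u_0}$ to the full equation $\partial_{t=0}^{k+1} MC(u_t) = \ell_1^{u_0}(u_0^{k+1}) + Obs^k(u_t)$ and use $\ell_1^{u_0} \circ \ell_1^{u_0} = 0$ (which holds because $(V, \ell_1^{u_0})$ is a cochain complex, by the first Jacobi identity applied to the twisted $L_\infty$-algebra $\ell^{u_0}$). This gives $\ell_1^{u_0}(\partial_{t=0}^{k+1} MC(u_t)) = \ell_1^{u_0}(Obs^k(u_t))$. So it suffices to show that $\ell_1^{u_0}(\partial_{t=0}^{k+1} MC(u_t)) = 0$ whenever $u_t$ is a $k$-deformation. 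This in turn should follow from the higher Jacobi identities $Jac_n(\ell) = 0$ (equivalently $Jac_n(\ell^{u_0}) = 0$, since twisting preserves the $L_\infty$ structure by Proposition \ref{derivativeproperty}): the identity $Jac_2(\ell^{u_0}) = 0$ says precisely that $\ell_1^{u_0}$ is a derivation-like compatibility with $\ell_2^{u_0}$, and more generally the Jacobi identities give $\ell_1^{u_0}(MC(u_t))$ (or its derivatives) expressed in terms of lower derivatives of $MC(u_t)$ together with brackets. Concretely, one differentiates the identity coming from $Jac_\bullet(\ell^{u_0}) = 0$ evaluated appropriately, or — cleaner — one notes that the $L_\infty$ relations imply $\ell_1^{u_0}(\ell_0^{u_t - u_0}) = $ (a sum of terms each containing a factor $\ell_0^{u_0}$ or lower-order $MC$ derivatives), all of which vanish on a $k$-deformation up to order $k+1$.

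**Main obstacle.** The delicate part is the bookkeeping that turns the $L_\infty$ Jacobi identities into the statement ``$\ell_1^{u_0}(\partial_{t=0}^{k+1} MC(u_t))$ is a combination of $\partial_{t=0}^j MC(u_t)$ for $j \le k$''. The cleanest route is probably to use the twisted $L_\infty$-algebra $(V, \mathfrak{l}(u_t))$ along the path: since $MC(u_t) = \ell_0^{u_t}$ is the curvature of the twisted algebra, the Bianchi-type identity $\ell_1^{u_t}(\ell_0^{u_t}) = \tfrac12 \ell_2^{u_t}(\ell_0^{u_t}, \ell_0^{u_t})$ (or the appropriate $L_\infty$ analogue, which follows from $Jac_1(\mathfrak{l}(u_t)) = 0$ combined with the structure of the Jacobiator) holds pointwise in $t$. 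Differentiating this $k$ times at $t=0$, using Corollary \ref{derivative} and the Leibniz rule, every term on the right-hand side carries at least one factor $\partial_{t=0}^{j} MC(u_t)$ with $j \le k$ when the total order is $k+1$; these all vanish by hypothesis. Wrestling the combinatorial coefficients and the Koszul signs into place — likely via the partition notation $\vec r_i$ and Appendix \ref{appendix-partition} — is the one genuinely technical step; the conceptual content is simply ``$d^2 = 0$ plus the Bianchi identity for the curvature.'' Finally, since $Obs^k(u_t) \in V_1$ and $\ell_1^{u_0}(Obs^k(u_t)) = 0$, it represents a class in $H^1(V, \ell^{u_0})$, which is the ``in particular'' clause.
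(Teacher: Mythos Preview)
Your approach matches the paper's: use Proposition \ref{dMC1} to write $\partial_{t=0}^{k+1} MC(u_t) = \ell_1^{u_0}(u_0^{k+1}) + Obs^k(u_t)$, apply $\ell_1^{u_0}$ (killing the first term since $(\ell_1^{u_0})^2=0$), and then show $\ell_1^{u_0}(\partial_{t=0}^{k+1} MC(u_t)) = 0$ by differentiating a Bianchi-type identity $k+1$ times along the path --- the paper packages this last step as Lemma \ref{dMC2}, whose proof is exactly the Leibniz/Corollary \ref{derivative} expansion you sketch (carried out in Proposition \ref{PropAuxB} with $v_t=MC(u_t)$). One small correction: the identity you need is simply $\ell_1^{u_t}(\ell_0^{u_t}) = 0$, i.e.\ the $0$-th Jacobi identity for the twisted curved $L_\infty$-algebra $\ell^{u_t}$ (not $Jac_1$, and with no $\tfrac12\ell_2^{u_t}(\ell_0^{u_t},\ell_0^{u_t})$ on the right), so after differentiating, the leading term $\ell_1^{u_0}(\partial_{t=0}^{k+1} MC(u_t))$ is directly balanced by terms each containing a factor $\partial_{t=0}^{j} MC(u_t)$ with $j \le k$ --- no right-hand side to worry about.
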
 

In order to prove this proposition we make use of the following lemma.

\begin{lemma}\label{dMC2}
	For every $k\in\N$ we have the following equation 
		\begin{multline*}
			\ell_1^{u_t}(\partial_t^k MC(u_t)) \\
			+\sum_{i=1}^{k} \sum_{j=0}^{k-i}\sum_{\vec{r}_{i}=k-j} \binom{k}{\vec{r}_{i}} \dfrac{1}{i!j!}\ \ell^{u_t}_{i+1}(\partial_t^{r_1} u_t \odot\cdots\odot \partial_t^{r_i} u_t \odot \partial_t^j MC(u_t))=0.
		\end{multline*}
\begin{proof}
	 By Proposition \ref{PropAuxB}, letting $v_t=MC(u_t)$, the left-hand side equals 
	 \begin{multline*}
			\partial_{t}^{k} \ell_1^{u_t} MC(u_t) =\ell_1^{u_t}(\partial^k_t MC(u_t))+ \\
			\sum_{i=1}^{k} \sum_{j=0}^{k-i}\sum_{\vec{r}_{i}=k-j} \binom{k}{\vec{r}_{i}} \dfrac{1}{i!j!}\ \ell^{u_t}_{i+1}(\partial_t^{r_1} u_t \odot\cdots\odot \partial_t^{r_i} u_t \odot \partial_t^j MC(u_t)).
		\end{multline*}	
	By the 0-th Jacobi identity \ref{jacobi}, $\ell_1^{u_t} MC(u_t)=\ell_1^{u_t}( \ell_0^{u_t})=0$.
\end{proof}
\end{lemma}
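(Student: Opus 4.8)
The plan is to obtain the displayed identity as the $k$-th $t$-derivative of a quantity that vanishes identically. By Proposition \ref{derivativeproperty}, the twisted structure $(V,\ell^{u_t})$ is again a curved $L_\infty$-algebra for each $t$ in the domain of convergence, so all of its Jacobi identities $Jac_n(\ell^{u_t})=0$ hold. The $0$-th one reads $Jac_0(\ell^{u_t})=\ell_1^{u_t}(\ell_0^{u_t})=0$. Since $MC(u_t)=\ell_0^{u_t}$ by definition, the $V_2$-valued path $t\mapsto \ell_1^{u_t}(MC(u_t))$ is therefore identically zero, and hence so are all of its $t$-derivatives:
\[
\partial_t^k\bigl(\ell_1^{u_t}(MC(u_t))\bigr)=0 \qquad \text{for every } k\in\N.
\]

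The content of the lemma is then precisely that its left-hand side equals this $k$-th derivative. To make this explicit I would establish the general Leibniz-type formula, valid for an arbitrary smooth path $v_t\subset V$ (this is the content of Proposition \ref{PropAuxB}),
\[
\partial_t^k\bigl(\ell_1^{u_t}(v_t)\bigr)=\ell_1^{u_t}(\partial_t^k v_t)+\sum_{i=1}^{k}\sum_{j=0}^{k-i}\sum_{\vec{r}_i=k-j}\binom{k}{\vec{r}_i}\frac{1}{i!\,j!}\,\ell_{i+1}^{u_t}\bigl(\partial_t^{r_1}u_t\odot\cdots\odot\partial_t^{r_i}u_t\odot\partial_t^j v_t\bigr),
\]
and then specialize $v_t=MC(u_t)$. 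Comparing the right-hand side with the statement of the lemma shows they coincide, and since the left-hand side vanishes the conclusion follows at once.

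I would prove the Leibniz formula by induction on $k$, the base case $k=0$ being trivial. For the inductive step I would differentiate the right-hand side once more, applying Corollary \ref{derivative} to each summand $\ell_{i+1}^{u_t}(\partial_t^{r_1}u_t\odot\cdots\odot\partial_t^{r_i}u_t\odot\partial_t^j v_t)$. That corollary produces three families of contributions: one in which an extra factor $\partial_t u_t$ is inserted, raising the arity from $i+1$ to $i+2$ (i.e. $i\mapsto i+1$); one in which some existing order $r_p$ on a $u_t$-slot is increased by one; and one in which the order $j$ on the $v_t$-slot is increased by one. The first term $\ell_1^{u_t}(\partial_t^k v_t)$ likewise splits into the new leading term $\ell_1^{u_t}(\partial_t^{k+1}v_t)$ and a contribution feeding the $i=1$ family. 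The remaining task is to regroup all these pieces into the $(k+1)$-version of the formula with the correct weights.

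The main obstacle is exactly this combinatorial matching, for which I expect to use the multinomial and Pascal-type identities collected in Appendix \ref{appendix-partition}. The key point is that the transition $\binom{k}{\vec{r}_i}\to\binom{k+1}{\vec{r}_i}$ is realized through the recursion $\binom{k+1}{\vec{r}}=\sum_p\binom{k}{\vec{r}-e_p}$, where $p$ ranges over the slots whose order gets bumped, and that inserting a fresh $\partial_t u_t$ converts the symmetric weight $1/i!$ into $1/(i+1)!$ together with the count of slots available. Care is required because raising an order $r_p$ on a $u_t$-slot and raising the order $j$ on the $v_t$-slot carry different factorial normalizations (the shared $1/i!$ among the symmetric $u_t$-slots versus the separate $1/j!$), so these two families must be tracked independently and only recombined after the weights are reconciled. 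Once this identity is verified, the specialization $v_t=\ell_0^{u_t}$ together with the vanishing $\ell_1^{u_t}(\ell_0^{u_t})=0$ closes the argument.
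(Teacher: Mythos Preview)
Your proposal is correct and follows exactly the paper's approach: recognize the left-hand side as $\partial_t^k\bigl(\ell_1^{u_t}(MC(u_t))\bigr)$ via the Leibniz-type formula of Proposition~\ref{PropAuxB} with $v_t=MC(u_t)$, and then use the $0$-th Jacobi identity $\ell_1^{u_t}(\ell_0^{u_t})=0$ for the twisted structure to conclude. Your sketch of the inductive proof of Proposition~\ref{PropAuxB} is superfluous for this lemma (the paper simply cites it), but it is also essentially how the appendix proves that proposition.
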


\begin{proof}[Proof of Proposition \ref{aux}]
By hypothesis $\partial_{t=0}^k MC(u_t)=0$. Using Lemma \ref{dMC2}, we get that
\[
	\ell_1^{u_0}(\partial_{t=0}^{k+1} MC(u_t))=0.
\]
By Proposition \ref{dMC1}, 

\begin{equation}\label{MC_OBS}
\partial_{t=0}^{k+1} MC(u_t)=\ell_1^{u_0}(u_0^{k+1})+Obs^{k}(u_t),
\end{equation} and the result follows.
\end{proof}

We can now recast Equation \ref{indeq} as an equation in cohomology.

\begin{thm}
Let $u_0\in MC(V,\ell)$ be a Maurer-Cartan element and $u_t\subset D_\ell$ a deformation of $u_0$. Then 
\[
	Obs^k(u_t)=0 \ \ \text{in} \ \ H^1(V,\ell^{u_0}),
\]
for every $k\in\N$. 
\begin{proof}
If $u_t$ is a deformation of $u_0$, it is in particular a $k$-deformation and $k+1$-extendable, for all $k$. The result follows from Proposition \ref{aux} and Equation \ref{indeq}. 
\end{proof}
\end{thm}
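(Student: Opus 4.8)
The plan is to deduce the statement directly from the two ingredients just assembled: Proposition \ref{aux}, which tells us that $Obs^k(u_t)$ is a cocycle in $(V,\ell_1^{u_0})$ and hence represents a class in $H^1(V,\ell^{u_0})$, and Equation \ref{indeq} (equivalently Equation \ref{MC_OBS}), which expresses $Obs^k(u_t)$ as a coboundary whenever a suitable extension exists. So the argument is essentially a bookkeeping argument about what ``deformation'' gives us versus what ``$k$-deformation'' gives us.

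First I would observe that a genuine deformation $u_t$ of $u_0$ satisfies $MC(u_t)=0$ identically in $t$, so in particular $\partial_{t=0}^j MC(u_t)=0$ for every $j\in\N$. Thus $u_t$ is a $k$-deformation for every $k$, and moreover it is $(k+1)$-extendable for every $k$ — indeed $u_t$ itself serves as the required $(k+1)$-deformation, with $v_0^j = u_0^j$ for all $0\le j\le k$ (trivially, since $v_t=u_t$), and $v_0^{k+1}=u_0^{k+1}$. Hence the extension equation \ref{indeq} holds with $v_0^{k+1}=u_0^{k+1}$, i.e. $\ell_1^{u_0}(u_0^{k+1}) = -\,Obs^k(u_t)$. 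This exhibits $Obs^k(u_t)$ as a coboundary in $(V,\ell_1^{u_0})$.

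Combining the two points: by Proposition \ref{aux}, $Obs^k(u_t)$ is a cocycle, so it has a well-defined class $[Obs^k(u_t)]\in H^1(V,\ell^{u_0})$; by the previous paragraph it equals $-\ell_1^{u_0}(u_0^{k+1})$, which is a coboundary, so its class vanishes. Therefore $Obs^k(u_t)=0$ in $H^1(V,\ell^{u_0})$ for every $k\in\N$, which is exactly the assertion. I would also remark that this is consistent with — and in fact is the infinitesimal shadow of — Equation \ref{MC_OBS}, since for a deformation the left-hand side $\partial_{t=0}^{k+1}MC(u_t)$ vanishes, forcing $\ell_1^{u_0}(u_0^{k+1}) + Obs^k(u_t)=0$ on the nose, not merely in cohomology.

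There is no real obstacle here; the proof is short once Propositions \ref{dMC1} and \ref{aux} are in hand. The only point requiring a little care is the logical distinction between a path being a $k$-deformation for all $k$ (which does \emph{not} by itself kill the obstruction classes — a priori one only knows they are cocycles) and the stronger fact that an honest deformation furnishes explicit primitives $u_0^{k+1}$ for each $Obs^k$; it is this second observation, via \ref{indeq}, that forces the classes to vanish. I would state the proof in exactly the two-line form already drafted in the excerpt, perhaps expanding the phrase ``it is in particular a $k$-deformation and $(k+1)$-extendable, for all $k$'' to make explicit that $u_t$ itself is the witnessing $(k+1)$-deformation.
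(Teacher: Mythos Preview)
Your proposal is correct and follows exactly the paper's approach: a deformation is a $k$-deformation and $(k+1)$-extendable for every $k$, so Proposition \ref{aux} gives that $Obs^k(u_t)$ is a cocycle while Equation \ref{indeq} (with $v_0^{k+1}=u_0^{k+1}$) exhibits it as a coboundary. Your expanded explanation of why $u_t$ itself witnesses the $(k+1)$-extendability is a welcome clarification of the paper's terse two-line argument.
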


\subsection{Formal Maurer-Cartan elements}
Now we consider the formal power series $V [\! [t] \!] := \oplus_{i\in\Z} V_i [\! [t] \!]$. We can extend the $L_\infty$-algebra framework to the formal setting by letting all the structure maps be $t$-linear. 

\begin{dfn}A \textit{formal Maurer-Cartan element} of $(V, \ell)$ is a Maurer-Cartan element of $(V [\! [t] \!], \ell)$.
\end{dfn}

Let $u[\! [t] \!]\in V_0[\! [t] \!]$ be $u[\! [t] \!]= \sum_{k\geq 0} \dfrac{u_k}{k!} t^k$. By the Equations \ref{MC_OBS} we know that $u[\! [t] \!]$ is a formal Maurer-Cartan element if and only if 
\begin{equation}\label{melu}
	 \ell_1^{u_0} (u_{k+1})+ Obs^{k}(u[\! [t] \!])=0, \ \ \text{for all} \ \ k.
\end{equation}
By definition, $Obs^{k}(u[\! [t] \!])$ depends only on $u_0, \ldots, u_k$. Therefore we can construct a formal Maurer-Cartan element by solving a recursive sequence of cohomological equations. 
\begin{thm}\label{formalint}
Let $v_0$ be a Maurer-Cartan element of $(V,\ell)$. If $H^1(V,\ell^{v_0})=0$ then, for every $v_1\in \ker \ell^{v_0}_1$, there exists a formal Maurer-Cartan element $u[\! [t] \!]$ such that $u_0=v_0$ and $u_1=v_1$. 
\begin{proof}
Let $u_0=v_0$ and $u_1=v_1\in \ker \ell^{v_0}_1$ and suppose we have $u_0,\ldots, u_{k}$ such that \ref{melu} holds for $1\leq j\leq k-1$. By Proposition \ref{aux}, letting 
\[
	u^k[\! [t] \!]=\sum_{j=0}^k \dfrac{u_j}{j!} t^j,
\]
 we have that $Obs^{k}(u^k[\! [t] \!])\in H^1(V,\ell^{v_0})$. Consequently, there exists $u_{k+1}$ such that 
\[
	 \ell_1^{v_0} (u_{k+1})+ Obs^{k}(u^k[\! [t] \!])=0.
\]
Therefore, we can extend the $k$-deformation $u^k[\! [t] \!]$ to the $(k+1)$-deformation 
\[
	u^{k+1}[\! [t] \!]=\sum_{j=0}^{k+1} \dfrac{u_j}{j!} t^j.
\]
\end{proof}
\end{thm}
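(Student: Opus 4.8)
The plan is to reduce Theorem~\ref{formalint} to a straightforward induction on the order of the deformation, feeding the vanishing hypothesis $H^1(V,\ell^{v_0})=0$ into the recursive cohomological equations \ref{melu}. We wish to produce a formal power series $u[\![t]\!]=\sum_{k\ge 0}\frac{u_k}{k!}t^k$ with prescribed $u_0=v_0$ and $u_1=v_1$, so that $u[\![t]\!]$ is a formal Maurer-Cartan element; by Equations \ref{MC_OBS} (or equivalently \ref{melu}) this is exactly the requirement that $\ell_1^{v_0}(u_{k+1})+Obs^k(u^k[\![t]\!])=0$ hold for every $k$, where $Obs^k$ depends only on $u_0,\dots,u_k$.

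The induction is set up as follows. For the base case, note that the $k=0$ equation is $\ell_1^{v_0}(u_1)+Obs^0=0$; since $Obs^0$ is the empty sum (its index set requires $i\ge 2$ and $\vec r_i$ a partition of $1$) it vanishes, so the equation reads $\ell_1^{v_0}(u_1)=0$, which is precisely the hypothesis $v_1\in\ker\ell_1^{v_0}$. For the inductive step, suppose $u_0,\dots,u_k$ have been constructed so that \ref{melu} holds for all $j\le k-1$; then $u^k[\![t]\!]=\sum_{j=0}^{k}\frac{u_j}{j!}t^j$ is a $k$-deformation of $v_0$ in the sense that $\partial_{t=0}^j MC(u^k[\![t]\!])=0$ for $0\le j\le k$ --- here one uses \ref{MC_OBS} together with the induction hypothesis. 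By Proposition~\ref{aux}, the obstruction $Obs^k(u^k[\![t]\!])$ is a $\ell_1^{v_0}$-cocycle and hence defines a class in $H^1(V,\ell^{v_0})$. Since that cohomology vanishes, the class is zero, i.e. $Obs^k(u^k[\![t]\!])=\ell_1^{v_0}(w)$ for some $w\in V_0$; setting $u_{k+1}:=-w$ solves the $k$-th equation of \ref{melu}, completing the step. Iterating produces the full formal solution.

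The one subtlety worth spelling out --- and the place where a careless argument could go wrong --- is the bookkeeping that lets us apply Proposition~\ref{aux}: that proposition is stated for an actual $k$-deformation (a path satisfying $\partial_{t=0}^j MC=0$), whereas in the formal induction we only know the algebraic equations \ref{melu} for $j\le k-1$. The point is that, by \ref{MC_OBS}, the equation $\partial_{t=0}^{j+1}MC(u_t)=\ell_1^{v_0}(u^{j+1})+Obs^j(u_t)$ shows these two conditions are equivalent order by order: knowing \ref{melu} up to index $k-1$ is exactly knowing $\partial_{t=0}^j MC(u^k[\![t]\!])=0$ for $0\le j\le k$, so $u^k[\![t]\!]$ genuinely is a $k$-deformation and Proposition~\ref{aux} applies verbatim. (One should also observe that truncating $u[\![t]\!]$ at order $k$ does not change $Obs^k$, since $Obs^k$ only sees $u_0,\dots,u_k$.) Everything else is formal manipulation; no convergence issue arises because we work in $V[\![t]\!]$, where the $L_\infty$ structure maps are extended $t$-linearly and each coefficient equation involves only finitely many terms. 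The reformulation of the conclusion as "any infinitesimal deformation extends to a formal deformation" is then immediate: an infinitesimal deformation of $v_0$ is precisely a choice of $v_1\in\ker\ell_1^{v_0}$, and the constructed $u[\![t]\!]$ is the desired formal extension.
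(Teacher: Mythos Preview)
Your proof is correct and follows essentially the same inductive argument as the paper: build $u_{k+1}$ recursively by invoking Proposition~\ref{aux} to show $Obs^k$ is an $\ell_1^{v_0}$-cocycle, then use $H^1(V,\ell^{v_0})=0$ to solve Equation~\ref{melu}. You additionally spell out the base case and the bookkeeping that justifies applying Proposition~\ref{aux} to the truncated series, which the paper leaves implicit.
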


\begin{rmk} The same result, when $(V, \ell)$ is a differential graded Lie algebra, is classic and can be found for example in \cite[Theorem 3.25]{DMZ2007} (in the context of deformations of associative algebras). Although the result for general $(V, \ell)$ could then be obtained via rectification (see \cite[Appendix B6]{Quillen1969}, or \cite[Section 2.2]{Hinich2001}) that process leads to infinite dimensional differential graded Lie algebras. 

We have chosen to offer a construction of the obstructions directly in terms of $(V, \ell)$. When $V$ is finite dimensional, the obstructions turn out to be convenient for the study of convergence of formal Maurer-Cartan elements, explicitly realizing them as smooth families of Maurer-Cartan elements. We will focus on that task in the next Section.
\end{rmk}

\section{Integrability}
We come back to the set up of Section \ref{Sec-rigidity}: 
\begin{equation}\label{problem}
\begin{tikzcd}
					 E\curvearrowleft G \arrow[d]  \\
					\arrow[ u, bend left, dashed, "\sigma"]  M \curvearrowleft G
				\end{tikzcd}
\end{equation}

\begin{dfn} We say that the problem \ref{problem} is \textit{integrable} at $x_0$ if there exists an open subset $x_0\in U\subset M$ and a (immersed) submanifold $S_{x_0}\subset M$ such that $S_{x_0}\cap U=\sigma^{-1}(0)\cap U$.
\end{dfn}
\begin{rmk}
 If we have integrability at $x_0$ then $G(x_0)\cap U\subset S_{x_0}\cap U$. Furthermore, by Proposition \ref{Tsigma}
\[
	\dim G(x_0)\leq \dim S_{x_0}\leq \dim \ker d_{x_0}^v\sigma.
\] 
\end{rmk}
\begin{dfn} The problem \ref{problem}
 is called \textit{maximally integrable} at $x_0$ if it is integrable of maximal dimension, i.e. $\dim S_{x_0}= \dim \ker d_{x_0}^v\sigma$. 
\end{dfn}

\subsection{Stable sections}

In order to motivate the definition of stable sections, and of equivariant deformation problems, we recall one of the main elements of the proof of Proposition \cite[Proposition 4.4 (1)]{crainic2014survey}. 

Integrability is a local question, therefore we consider $E=M\times V$, with $V$ some vector space and $\sigma:M\rightarrow V$. We want $\sigma^{-1}(0)$ to have a manifold structure. Of course, the first thing that comes to mind is to use the regular value theorem. However, being a submersion is very restrictive and we want to relax this hypothesis. An alternative is to use transversality. In fact, picking a complement $V= Im \ d_{x_0} \sigma \oplus B$ gives us a manifold structure for $\sigma^{-1}(B)$. How far is $\sigma^{-1}(B)$ from being $\sigma^{-1}(0)$? The following extra structure controls this question. 
\begin{dfn}

Let $\pi_F: F\rightarrow M$ be a vector bundle and let  $\Phi: E\rightarrow F$ be a vector bundle map over the identity.
The section $\sigma: M\rightarrow E$ is called \textit{stabilized by} $\Phi$ if $\Phi\circ \sigma=0$. 
\end{dfn}
Indeed, since $\Phi\circ\sigma=0$, we know that $\sigma(M)\subset \ker \Phi$. Accordingly, the following cochain complex controls how far is $\sigma^{-1}(B)$ from being equal to $\sigma^{-1}(0)$:
\begin{equation}\label{stableseq}
		\begin{tikzcd}
			  T_{x_0} M \arrow[r," d_{x_0}^v\sigma"] & E_{x_0} \arrow[r, " \Phi_{x_0}"] & F_{x_0}.
		\end{tikzcd}
\end{equation}
\begin{dfn}
The solution $x_0$ is called \textit{infinitesimally stable} if the sequence \ref{stableseq} is exact at $E_{x_0}$. 
\end{dfn}

In \cite{crainic2014survey}, the authors prove the following.
\begin{prop}
Let $x_0\in \sigma^{-1}(0)$. If $x_0$ is infinitesimally stable, the problem is maximally integrable at $x_0$. 
\end{prop}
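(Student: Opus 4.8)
The plan is to reduce the geometric statement to the algebraic construction already set up in Sections 3 and 4. Since integrability is local, we work with $E = M\times V$ and $\sigma\colon M\to V$; by \cite[Theorem 5.2.5]{baarsma2019deformations} the analyticity hypothesis gives a local homeomorphism between $\sigma^{-1}(0)$ near $x_0$ and $MC(V,\ell)$ near the Maurer--Cartan element corresponding to $x_0$ (after translating so that this element is $0$, or working with the twisted $L_\infty$-algebra $\ell^{u_0}$). So it suffices to exhibit a smooth structure on $MC(V,\ell)$ near $0$ modelled on $\ker \ell_1 = H^1$-complement data. The hypothesis $H^1(V,\ell)=0$ together with Proposition \ref{hom} provides homotopy operators $h_1\colon V_1\to V_0$ and $h_2\colon V_2\to V_1$ with $\ell_1 h_2 + h_1\ell_1 = \mathrm{id}_{V_1}$.

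First I would set up the parametrization map. For each $v_1\in\ker\ell_1$ we run the recursion of Theorem \ref{formalint}, but now with the \emph{explicit} choice $u_{k+1} = -h_1(Obs^k(u[\![t]\!]))$ described in the introduction; this produces a formal Maurer--Cartan element $u[\![t]\!] = \sum_{k\geq 0}\frac{u_k}{k!}t^k$ with $u_0=0$, $u_1 = v_1$ depending (polynomially in each coefficient, hence) real-analytically on $v_1$. Grouping the series by total degree in $v_1$, one gets a formal power series $\Psi(v_1) = \sum_{k} u_k(v_1,\dots,v_1)/k!$ where each $u_k$ is a homogeneous degree-$k$ polynomial map $\ker\ell_1\to V_0$, with $u_0=0$ and $u_1 = \mathrm{id}$. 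The content of $N$-strictness is exactly what makes this series converge: when $\ell_j = 0$ for $j>N$, the obstruction $Obs^k$ only involves finitely many brackets, and one estimates $\|u_{k+1}\|$ by a recursion of the form $\|u_{k+1}\| \leq C\sum (\text{products of }\|u_{r_i}\|)$ with bounded combinatorial coefficients, which by a standard majorant/Cauchy-estimate argument (the same one underlying the analyticity in Proposition \ref{derivativeproperty}) yields a positive radius of convergence. Thus $\Psi$ is a genuine real-analytic map from a neighbourhood of $0$ in $\ker\ell_1$ into $V_0$ with $d_0\Psi = \mathrm{id}$.

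Next I would show $\Psi$ parametrizes $MC(V,\ell)$: by construction $MC(\Psi(v_1)) = 0$, so the image lies in $MC(V,\ell)$; and conversely, given a Maurer--Cartan element $u$ near $0$, its degree-$1$ component $v_1 := \mathrm{pr}_{\ker\ell_1}(u)$ (with respect to a chosen splitting $V_0 = \ker\ell_1\oplus \mathrm{im}\,h_1$, i.e. using the projector $h_1\ell_1$) satisfies $\Psi(v_1) = u$ — this is where one uses that the homotopy-operator recursion reconstructs $u$ from its "leading" part, an inverse-function-theorem argument since $d_0\Psi = \mathrm{id}$ and $\Psi$ followed by the projection to $\ker\ell_1$ is the identity to first order. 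Concretely: $\Phi\colon \ker\ell_1\oplus C \to V_0$, $(v_1,c)\mapsto \Psi(v_1) + c$ for a complement $C$, has invertible differential at $0$, and $MC^{-1}(0)$ pulls back under $\Phi^{-1}$ to the graph $\{c = 0\}$ by the stabilization/constant-rank mechanism already used in the alternative proof of Theorem \ref{rigiditysurvey} and in the previous Proposition (infinitesimal stability / the role of $\Phi$ and $F$). This gives a chart in which $\sigma^{-1}(0)$ is a submanifold, hence smoothness of $\sigma^{-1}(0)$ around $x_0$, modelled on $\ker\ell_1 = \ker d_{x_0}^v\sigma$.

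The main obstacle I expect is the convergence estimate: making precise that $N$-strictness tames the combinatorial explosion in the iterated obstructions $Obs^k$ and in the applications of $h_1$, so that $\sum u_k(v_1,\dots,v_1)/k!$ has positive radius. The binomial-coefficient bookkeeping from Appendix \ref{appendix-partition} and the derivative formulas of Appendix B should furnish the needed bounds, but organizing them into a clean majorant argument — uniform in a neighbourhood, and compatible with the analytic dependence on $v_1$ — is the delicate point. A secondary subtlety is checking that the local correspondence from \cite[Theorem 5.2.5]{baarsma2019deformations} matches the chosen splitting and the homotopy operators coherently, so that the resulting submanifold genuinely has the maximal dimension $\dim\ker d_{x_0}^v\sigma$; this is where the hypotheses $H^1 = 0$ and $N$-strictness must be used in tandem rather than separately.
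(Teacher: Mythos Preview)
The paper does not itself prove this Proposition: it is attributed to \cite[Proposition~4.4(1)]{crainic2014survey}, and the paragraph preceding the definition of stabilized sections sketches that argument --- choose a complement $V = \mathrm{Im}\,d_{x_0}\sigma \oplus B$, so that $\sigma$ is transverse to $B$ and $\sigma^{-1}(B)$ is a submanifold of dimension $\dim\ker d_{x_0}^v\sigma$; infinitesimal stability gives $\ker\Phi_{x_0} = \mathrm{Im}\,d_{x_0}\sigma$, and since $\Phi\circ\sigma=0$ this forces $\sigma^{-1}(0)=\sigma^{-1}(B)$ near $x_0$. That proof uses only transversality and the constant rank theorem: no analyticity, no $L_\infty$-structure, no $N$-strictness.

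Your proposal is instead the route the paper later takes to its own Corollary~\ref{MaxIntNew} (the announced ``new proof''), and you have imported the extra hypotheses of that Corollary --- an analytic deformation problem and an $N$-strict associated $(V,\ell)$. Those hypotheses are not part of the Proposition as stated, so your argument does not establish it in the generality claimed; this is the genuine gap. Within the $N$-strict analytic regime your outline does match the paper's: the recursion $u_{k+1} = -h_1(Obs^k)$ is Proposition~\ref{solution}, convergence from finiteness of the $\ell_i$ leads to Theorem~\ref{intpath}, and the embedding $\Psi$ with $d_0\Psi=\mathrm{id}$ is Theorem~\ref{intdgla}. The one place your sketch is vaguer than the paper is the convergence step: where you invoke a generic majorant argument, the paper bounds $\|u_k\|/k!$ explicitly by super-Catalan numbers (Lemma~\ref{boundcoeff}, Proposition~\ref{SCN}) and uses their asymptotics (Proposition~\ref{asymp}) to obtain a concrete radius in Theorem~\ref{intpath}.
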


We will use homotopy operators for a suitable $L_\infty$-algebra to give a new proof of this result in Corollary \ref{MaxIntNew}.

\begin{rmk}
A section can be stabilized in different ways. For example, one can always take $F\rightarrow M$ an arbitrary vector bundle and $\Phi=0$. In this case being infinitesimally stable just means that $0$ is a regular value of $\sigma$. 
\end{rmk}

\subsection{Equivariant deformation problems and $L_\infty$-algebras}
\begin{dfn}
		An (analytic) equivariant deformation problem is given by 
			\[
				\begin{tikzcd}
					 E\curvearrowleft G \arrow[d]  \arrow[r,"\Phi"] & F\arrow[dl]\\
					\arrow[ u, bend left, dashed, "\sigma"]  M \curvearrowleft G
				\end{tikzcd}
			\]
		where:
		\begin{enumerate}[i.]
			\item $E \rightarrow M$ is an (analytic) vector bundle whose zero section $0_M$ is $G$-invariant.  
			\item $\sigma\in\Gamma_G(E)$ is an (analytic) $G$-equivariant section. 
			\item $\pi_F:F\rightarrow M$ is an (analytic) vector bundle.
			\item $\Phi: E\rightarrow F$ is an (analytic) vector bundle map such that $\Phi\circ \sigma=0$. 
		\end{enumerate}
We denote an (analytic) deformation problem by $(M\overset{\sigma}{\rightarrow} E \overset{\Phi}{\rightarrow} F)\curvearrowleft G $.  
\end{dfn} 
	\begin{exm}{\textbf{ Lie algebra structures:}}
		Let $\g$ be a finite dimensional vector space. Recall that  $C^k(\g)= Hom( \wedge^k \g,\g)$ and that $GL(\g)$ acts on $C^k(\g)$ by 
		\[
			(\eta\cdot A)(x_1,\ldots, x_k):= A^{-1}\eta(Ax_1, \ldots, Ax_k). 
		\]
		Then 
		\[
				\begin{tikzcd}
					 C^2(\g)\times C^3(\g) \curvearrowleft GL(\g) \arrow[d]  \arrow[r,"\Phi"] & C^2(\g)\times C^4(\g) \arrow[dl]\\
					\arrow[ u, bend left, dashed, " Jac"]  C^2(\g) \curvearrowleft GL(\g) 
				\end{tikzcd}
		\]
		is an analytic equivariant deformation problem with $Jac$ the Jacobiator and $\Phi(\mu, \eta):= \delta_\mu \eta$, where $\delta_\mu$ is the Chevalley-Eilenberg operator \begin{multline*}\delta_\mu\eta(v_1,\ldots,v_4)=\sum_{i=1}^4(-1)^{i+1}\mu(v_i,\eta(v_1,\ldots,\hat{v_i},\ldots,v_4)\\ + \sum_{1\leq i<j\leq 4} (-1)^{i+j}\eta(\mu(v_i,v_j),v_1,\ldots,\hat{v_i}\ldots,\hat{v_j},\ldots,v_4).\end{multline*} The space $Jac^{-1}(0)= Lie(\g)$ is the space of Lie algebra structures on $\g$. 
	\end{exm}
Equivariant deformation problems and curved $L_\infty$-algebras are closely related. To see this, take a chart $\varphi:U\subset M\rightarrow \R^n$, with $x_0\mapsto 0$, such that the vector bundles $E\vert_{U}\cong U\times A$ and $F\vert_U\cong U\times B$ are trivialized. Consider the graded vector space $V= \g[-1]\oplus \R^n[0]\oplus A[1]\oplus B[2]$, where $[i]$ indicates the grading. In the PhD thesis \cite{baarsma2019deformations} the author proved (a more general version) of the following:
\begin{thm}[\cite{baarsma2019deformations}, Theorem 5.25]\label{arjen}
Let $(M\overset{\sigma}{\rightarrow} E \overset{\Phi}{\rightarrow} F)\curvearrowleft G $ be an analytic equivariant deformation problem and let $\varphi$, $U$ and $V$ be as above. Then $V$ admits a curved $L_\infty$-algebra structure $\ell\in Lin(\bigodot V, V)_1$ such that there exists an open subset $U^\prime \subset U$ for which
\[
	\varphi: U^\prime \cap \sigma^{-1}(0)\rightarrow \varphi(U^\prime)\cap MC(V, \ell)
\] 
is a bijection.
\end{thm}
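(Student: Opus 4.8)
The plan is to obtain the curved $L_\infty$-structure by Taylor expanding, at $x_0 \cong 0$, all the data defining the equivariant deformation problem, and then to read off the Maurer--Cartan correspondence as an immediate consequence. First I would work in the trivializations already fixed in the statement, so that $\sigma$ becomes an analytic map $\sigma : U \to A$ with $\sigma(0) = 0$, the bundle map $\Phi$ becomes a map $U \to \mathrm{Hom}(A, B)$, and the infinitesimal action $\g \to \mathfrak{X}(M)$ becomes an analytic family of vector fields, i.e. an analytic map $\rho : U \to \mathrm{Hom}(\g, \R^n)$. The graded vector space $V = \g[-1] \oplus \R^n[0] \oplus A[1] \oplus B[2]$ has its degree-$1$ piece of $\bigodot V \to V$ given exactly by the sequence \ref{ses} assembled from $d_e m_{x_0}$, $d^v_{x_0}\sigma$, and $\Phi_{x_0}$; the higher brackets $\ell_k$ are, up to combinatorial normalization factors $1/k!$ and Koszul signs, the higher Taylor coefficients $d^k\sigma(0)$, $d^k\Phi(0)$, $d^k\rho(0)$ together with the coefficients of the Lie bracket structure constants of $\g$ (for the part landing in $\g[-1]$). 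Concretely I would define $\ell_k$ on a homogeneous input by declaring its component in each summand of $V$ to be the corresponding multilinear Taylor coefficient, with the convention that inputs in $\g[-1]$ are fed into the action/structure-constant maps and inputs in $\R^n[0]$ are fed into the derivatives of $\sigma$ and $\Phi$.

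The core of the argument is to verify the Jacobi identities $Jac_n(\ell) = 0$ for all $n$. These should not be checked by brute force; instead I would organize them by which summand of $V$ the output lies in, and in each case identify $Jac_n(\ell) = 0$ with a Taylor coefficient of a known identity among the structural data. For the component valued in $B[2]$ the relevant identity is $\Phi \circ \sigma = 0$: differentiating this $n$ times at $0$ and applying the general Leibniz rule produces exactly the unshuffled sum appearing in \ref{jacobi}. For the component valued in $A[1]$ the relevant identity is the $G$-equivariance of $\sigma$, i.e. that $\sigma$ is constant along orbits, which infinitesimally says $d\sigma(\rho(\cdot)) = (\text{linearized action on } E)\circ \sigma$; its Taylor coefficients give the mixed Jacobi relations involving one $\g$-input. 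For the component valued in $\R^n[0]$ (respectively $\g[-1]$) the relevant identity is that $\rho$ is a Lie algebra morphism into vector fields (respectively the Jacobi identity of $\g$ together with compatibility of the action), again read off order by order. In each case the combinatorial bookkeeping — matching binomial coefficients from the Leibniz rule with the $(i,j-1)$-unshuffle sums and Koszul signs in $Jac_n$ — is exactly the content of Appendices \ref{appendix-partition} and B, which I would invoke rather than redo.

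Once the curved $L_\infty$-structure is in place, the Maurer--Cartan correspondence is essentially definitional: for $v \in \R^n[0]$ sufficiently small, the equation $MC(v) = \ell_0^v = \sum_{k \geq 0} \frac{1}{k!}\ell_k(\odot^k v) = 0$, when projected onto the $A[1]$-summand, is precisely $\sum_{k \geq 0}\frac{1}{k!} d^k\sigma(0)(v,\ldots,v) = 0$, which by analyticity of $\sigma$ equals $\sigma(v) = 0$ on a neighbourhood of $0$; the projections onto the other summands vanish automatically for degree reasons (there is no $\ell_0$-contribution into $\g[-1]$, $\R^n[0]$, or $B[2]$ once $\sigma(0)=0$ and $\Phi(0)$ is handled). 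Hence on a possibly smaller open $U' \subset U$ the chart $\varphi$ restricts to a bijection between $U' \cap \sigma^{-1}(0)$ and $\varphi(U') \cap MC(V,\ell)$, using that convergence of the series is guaranteed on the interior of the domain of convergence $D_\ell$, which by analyticity contains a neighbourhood of $0$. The main obstacle is the Jacobi identity verification: keeping the Koszul signs, the unshuffle combinatorics, and the multi-degree bookkeeping consistent across all four output summands simultaneously is genuinely delicate, and it is where essentially all the work (and the need for the appendices) lies; everything before and after it is formal.
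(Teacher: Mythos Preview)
The paper does not prove this theorem; it is stated with attribution to Baarsma's thesis \cite{baarsma2019deformations} and no proof is supplied. The only information the paper gives about the construction is the remark immediately following the statement, which says that $(V,\ell)$ is determined by the Taylor series at $x_0$ of $\sigma$, the $G$-actions on $M$ and $E$, the Lie bracket on $\g$, and $\Phi$ --- which is consistent with your outline.

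Your sketch is a plausible reconstruction of how such a proof would go, and the Maurer--Cartan correspondence part is essentially correct (indeed, for $v\in V_0$ the output of $\ell_k(\odot^k v)$ lies in $V_1=A$ for degree reasons, so the equation really does reduce to the Taylor series of $\sigma$). One caveat: you appeal to Appendices~A and~B of this paper for the combinatorial bookkeeping in the Jacobi verification, but those appendices treat partitions and higher derivatives of the Maurer--Cartan map for the purpose of the obstruction theory in Sections~4--5, not the $L_\infty$ Jacobi identities arising from Taylor data. That verification lives entirely in \cite{baarsma2019deformations} and is not reproduced here, so you cannot cite these appendices for it. Since there is no proof in the paper to compare against, there is nothing further to assess.
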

\begin{rmk}
The curved $L_\infty$-algebra $(V,\ell)$ is determined by the Taylor series at $x_0$ of: the section $\sigma$, the actions of $G$ on $M$ and on $E$, the Lie bracket on $\g$, and the vector bundle map $\Phi$ \cite[Theorem 5.2.4]{baarsma2019deformations}. Moreover, if we choose different trivializations we get isomorphic curved $L_\infty$-algebras. 
\end{rmk}
In particular, the Maurer-Cartan equation $MC: \R^n\rightarrow A$ is given by
\[
	MC(v)= \sum_{k\geq 0} \dfrac{1}{k!} d^k_0\tsigma (v, \overset{k}{\cdots}, v),
\]
with $\tsigma: \R^n\rightarrow A$ the local description of the section.  Accordingly \[\sigma(x_0)=0 \ \ \Leftrightarrow\ \ MC(0)=0 \ \ \Leftrightarrow\ \ (V,\ell)\ \text{is an } L_\infty\text{-algebra.}\]

\subsection{Homotopy operators and integrability}
Let $(M\overset{\sigma}{\rightarrow}E\overset{\Phi}{\rightarrow} F)\curvearrowleft G$ be an analytic deformation problem, with $\sigma(x_0)=0$, and let $ (V, \ell)$ be the $L_\infty$-structure associated to it by Theorem \ref{arjen}. The chart $\varphi: U\subset M\rightarrow \R^n$ gives an equivalence between integrability of the problem at $x_0$ and smoothness of an open neighbourhood $0\in W\cap MC(V,\ell)$ of the Maurer-Cartan elements. We will use the algebraic structure on $V$ to construct an explicit smooth structure on $W\cap MC(V,\ell)$. Indeed, Theorem \ref{formalint} tells us how the cohomology $H^1(V,\ell)$ controls formal Maurer-Cartan elements $u_t$. Letting 
\[
	u_t=\sum_k \dfrac{u_k}{k!} t^k,
\] 
we showed that the coefficients of $u_t$ can be built recursively by solving the infinite sequence of cohomological equations
\[
	\ell_1(u_{k+1})+Obs^k(u[\![t ]\!])=0. 
\]
Now, by Proposition \ref{hom}, the vanishing of $H^1(V,\ell)=0$ gives us homotopy operators $h_2: B\rightarrow A$ and $h_1: A\rightarrow \R^n$. They provide explicit solutions for the coefficients of $u_t$. 
\begin{prop}\label{solution}
Let $u_0=0$ and $u_1\in \ker \ell_1$. If $H^1(V,\ell)=0$ then $u_t=\sum_k \dfrac{u_k}{k!} t^k\in \R^n[ \![ t]\!]$, given by 
\[
	u_{k+1}:=- h_1 ( Obs^k( u_t)),
\]
is a formal Maurer-Cartan element. 
\end{prop}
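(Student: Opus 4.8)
The plan is to run exactly the induction in the proof of Theorem~\ref{formalint}, but recording at each stage the \emph{explicit} solution $u_{k+1}=-h_1(Obs^k(u_t))$ of the cohomological equation \ref{melu}. First one checks that the recursion is well-posed: by Equation~\ref{obstructions} the element $Obs^k(u_t)$ is assembled from terms $\ell_i^{u_0}(u_0^{r_1}\odot\cdots\odot u_0^{r_i})$ with $i\ge 2$ and $r_1+\cdots+r_i=k+1$, so every $r_j\le k$; hence $Obs^k(u_t)$ depends only on $u_0,\ldots,u_k$. Thus, taking $u_0=0$ and $u_1\in\ker\ell_1$ as the prescribed initial data, the rule $u_{k+1}:=-h_1(Obs^k(u_t))$ for $k\ge 1$ determines a well-defined formal series $u_t\in\R^n[\![t]\!]$.

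The inductive claim is that for each $k$ the truncation $u^k[\![t]\!]=\sum_{j=0}^k\frac{u_j}{j!}t^j$ is a $k$-deformation of $0$; equivalently, $\ell_1(u_j)+Obs^{j-1}(u_t)=0$ for $1\le j\le k$. For $k\le 1$ this holds because $u_0=0$ gives $MC(u_0)=\ell_0=0$ (zero curvature) and $\partial_{t=0}MC(u_t)=\ell_1(u_1)=0$ by the choice of $u_1$. For the inductive step, assuming $u^k[\![t]\!]$ is a $k$-deformation, Proposition~\ref{aux} gives $\ell_1(Obs^k(u_t))=0$, i.e. $Obs^k(u_t)\in\ker(\ell_1\colon A\to B)$; applying the homotopy identity $\ell_1\circ h_1+h_2\circ\ell_1=\mathrm{id}_A$ to $Obs^k(u_t)$ and using that $\ell_1$ kills it yields $\ell_1\bigl(h_1(Obs^k(u_t))\bigr)=Obs^k(u_t)$, hence $\ell_1(u_{k+1})+Obs^k(u_t)=0$. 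By Equation~\ref{MC_OBS} this is precisely $\partial_{t=0}^{k+1}MC(u_t)=0$, so $u^{k+1}[\![t]\!]$ is a $(k+1)$-deformation, closing the induction. Letting $k\to\infty$ gives $\partial_{t=0}^jMC(u_t)=0$ for all $j$, so $MC(u_t)=0$ in $V_1[\![t]\!]$ and $u_t$ is a formal Maurer--Cartan element.

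I do not expect a genuine obstacle: the statement is essentially Theorem~\ref{formalint} with the abstract lift of the class $[Obs^k]\in H^1(V,\ell)=0$ replaced by the concrete choice $-h_1(Obs^k(u_t))$, which is legitimate because $\ell_1\circ h_1$ restricts to the identity on $\ell_1$-cocycles in degree $1$. The two points deserving care are (i) that the apparently circular formula for $u_{k+1}$ is a bona fide recursion on $k$, which is the degree/partition count above, and (ii) the low-order bookkeeping: the formula $u_{k+1}=-h_1(Obs^k(u_t))$ is applied only for $k\ge 1$, since $Obs^0(u_t)=0$ and using it at $k=0$ would wrongly force $u_1=0$ instead of allowing an arbitrary element of $\ker\ell_1$.
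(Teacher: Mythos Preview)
Your proof is correct and is essentially the argument the paper intends: the paper does not give a separate proof of Proposition~\ref{solution}, treating it as the evident specialization of the proof of Theorem~\ref{formalint} in which the abstract preimage of $Obs^k$ is replaced by the explicit choice $-h_1(Obs^k(u_t))$. Your handling of well-posedness of the recursion, the base cases, and the application of the homotopy identity $\ell_1\circ h_1 + h_2\circ\ell_1=\mathrm{id}_A$ to the $\ell_1$-closed obstruction are exactly what is needed.
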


\begin{rmk} When $ (V, \ell)$ is a differential graded Lie algebra, a similar recurrence has recently been used to construct formal Maurer-Cartan elements, in the context of perturbative quantum mechanics \cite{LS2024}.
\end{rmk}

Our objective now is to use these explicit solutions to prove that $u_t$ converges for small values of $t$ and $u_1$. To see this, take a norm  $\norm{\cdot}$ on $\R^n$. By the equation \ref{obstructions} we get that 
\begin{equation}\label{aux2}
	\dfrac{\norm{u_{k+1}}}{(k+1)!}\leq \norm{h_1}\sum_{i=2}^{k+1}  \dfrac{\norm{\ell_{i}}}{i!} \sum_{\vec{r}_{i+1}=k+1 } \dfrac{\norm{u_{r_1}}}{(r_1)!} \cdots \dfrac{\norm{u_{r_{i+1}}}}{(r_{i+1})!}.
\end{equation}
\begin{lemma}\label{boundcoeff}
Suppose that there exists $\alpha>0$ satisfying
\[
	\sum_{i=1}^\infty \dfrac{\norm{\ell_{i}}}{i!}\leq \alpha.
\]
Then, for every $k$ we have that
\[
	\dfrac{\norm{u_{k}}}{k!}\leq \norm{u_1}^{k} (\norm{h_1}\alpha)^{k-1} C_{k} , 
\]
with 
\[
	C_k=\sum_{i=2}^{k}\sum_{\vec{r}_i=k} C_{r_1}\cdots C_{r_i}.
\]
\begin{proof}
If $\norm{h_1}\alpha\leq 1$ then the result follows by Inequality \ref{aux2}. For $\norm{h_1}\alpha\geq 1$, the proof is by strong induction. By Proposition \ref{solution} we know that 
\[
	u_2= -h_1\circ \ell_2(u_1,u_1),
\]
and letting $C_1=C_2=1$ the base cases holds. Assume now that the result holds for every $p\leq k-1$. By Inequality \ref{aux2}, we conclude that
\[
\dfrac{\norm{u_{k}}}{k!}\leq (\norm{u_1}\norm{h_1} \alpha)^{k} \sum_{i=2}^k   \left(\dfrac{1}{\norm{h_1}\norm{\alpha}}\right)^i \sum_{\vec{r}_i=k} C_{r_1}\cdots C_{r_i}.
\]
But $\left(\dfrac{1}{\norm{h_1}\norm{\alpha}}\right)^i \leq \dfrac{1}{\norm{h_1}\norm{\alpha}}$ for all $i$ and the result follows. 
\end{proof}
\end{lemma}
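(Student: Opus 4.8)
The plan is to prove the estimate $\norm{u_k}/k! \leq \norm{u_1}^k (\norm{h_1}\alpha)^{k-1} C_k$ by strong induction on $k$, treating the cases $\norm{h_1}\alpha \leq 1$ and $\norm{h_1}\alpha \geq 1$ slightly differently only in the bookkeeping. The starting point is Inequality \ref{aux2}, which already packages the recursion for $u_{k+1}$ in terms of the lower coefficients $u_{r_1},\dots,u_{r_{i+1}}$ with $r_1+\cdots+r_{i+1}=k+1$; the factor $\norm{h_1}$ appears once per application of the homotopy operator (Proposition \ref{solution}), and the hypothesis $\sum_i \norm{\ell_i}/i! \leq \alpha$ lets me replace all the $\norm{\ell_i}/i!$ weights by a single $\alpha$.

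First I would set up the base cases. By Proposition \ref{solution}, $u_2 = -h_1\circ\ell_2(u_1,u_1)$, so $\norm{u_2}/2! \leq \norm{h_1}\,(\norm{\ell_2}/2!)\,\norm{u_1}^2 \leq \norm{u_1}^2(\norm{h_1}\alpha)$, which matches the claimed bound with $C_1 = C_2 = 1$; the case $k=1$ is trivial. Then, assuming the bound for all $p \leq k-1$, I would substitute the inductive estimates into \ref{aux2}: each $\norm{u_{r_j}}/(r_j)!$ becomes $\norm{u_1}^{r_j}(\norm{h_1}\alpha)^{r_j-1}C_{r_j}$, and since $r_1+\cdots+r_i = k$ the $\norm{u_1}$-powers multiply to $\norm{u_1}^k$ and the $(\norm{h_1}\alpha)$-powers to $(\norm{h_1}\alpha)^{k-i}$. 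Collecting the extra factor $\norm{h_1}$ from the outer homotopy operator and the factor $\alpha$ from bounding the $\ell$-sum, one gets $(\norm{u_1}\norm{h_1}\alpha)^k$ times $\sum_{i=2}^k (\norm{h_1}\alpha)^{-i}\sum_{\vec r_i = k} C_{r_1}\cdots C_{r_i}$. When $\norm{h_1}\alpha \geq 1$ each $(\norm{h_1}\alpha)^{-i} \leq (\norm{h_1}\alpha)^{-1}$, which absorbs into the $(k-1)$-th power and leaves exactly $C_k$; when $\norm{h_1}\alpha \leq 1$ the estimate is even more direct from \ref{aux2} since the $\norm{h_1}\alpha$ factors only help.

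The main obstacle is purely combinatorial rather than analytic: one must carefully track the multinomial coefficients $\binom{k+1}{\vec r}$ appearing in \ref{obstructions} against the factorials $(r_j)!$ hidden in the normalized coefficients $\norm{u_{r_j}}/(r_j)!$, and check that the $(p,q)$-unshuffle combinatorics underlying $\ell_i^{u_0}$ and the obstruction formula collapse cleanly so that \ref{aux2} is actually valid as stated. Once \ref{aux2} is granted — which is where the machinery of Appendix \ref{appendix-partition} and the partition notation $\vec r_i = k$ does its work — the induction is a routine manipulation of power series majorants, and the sequence $C_k$ is recognizable as (a shift of) the Catalan-type numbers counting the relevant nested partitions, so that $\sum_k C_k z^k$ has a positive radius of convergence, which is what will ultimately give convergence of $u_t$ for small $t$ and small $\norm{u_1}$.
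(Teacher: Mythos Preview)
Your proposal is correct and follows essentially the same route as the paper: strong induction on $k$ based on Inequality \ref{aux2}, the same base case $u_2=-h_1\circ\ell_2(u_1,u_1)$ with $C_1=C_2=1$, the same substitution producing the factor $(\norm{h_1}\alpha)^{-i}$, and the same case split at $\norm{h_1}\alpha=1$. The only superfluous part is your last paragraph worrying about the validity of \ref{aux2}; in the paper that inequality is already recorded just before the lemma as an immediate consequence of the obstruction formula \ref{obstructions}, so no extra combinatorial work is needed there.
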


\subsection{Integrability of $N$-strict $L_\infty$-algebras}

\begin{dfn}We say that an $L_\infty$-algebra is \textit{$N$-strict} if $\ell_k=0$ for all $k\geq N$.
\end{dfn}

For example, any nilpotent $L_\infty$-algebra is $N$-strict \cite[Section 10.5]{Manetti2022}. Given an $N$-strict $L_\infty$-algebra, and any norm, let \[\alpha_\ell =\sum_{i=1}^N \frac{\norm{\ell_i}}{i!}.\]

\begin{thm}\label{intpath}
Let $(V,\ell)$ be an $N$-strict $L_\infty$-algebra such that $H^1(V,\ell)=0$. Then, for every $u_1\in \ker\ell_1$ with 
\[
	\norm{u_1}< \dfrac{1}{12 \norm{h_1}\alpha_\ell},
\]
there exists a deformation $u_t: [0,2)\rightarrow MC(V,\ell)\subset \R^n$ of $0$ by Maurer-Cartan elements, such that $\partial_{t=0}u_t=u_1$. 
\begin{proof}
Let $u_t$ be given as in Proposition \ref{solution}.  
By Lemma \ref{boundcoeff} we know that 
\[
	\dfrac{\norm{u_{k}}}{k!}\leq \norm{u_1}^{k} (\norm{h_1}\alpha_\ell)^{k-1} C_{k}.
\]
Moreover, the numbers $C_k$ are the super-Catalan numbers (see Proposition \ref{SCN}) and by the asymptotic growth \ref{asymp} we know that, for $k>>1$
\[
	\dfrac{\norm{u_{k}}}{k!}\leq \dfrac{1}{\norm{h_1}\alpha_\ell} (6\norm{u_1} \norm{h_1}\alpha_\ell)^k .
\]
Therefore, there exists a constant $M>0$ such that 
\[
	\norm{u_t}\leq M+ \dfrac{1}{\norm{h_1}\alpha_\ell} \sum_{k >>1} \left( 6 \norm{u_1} \norm{h_1}\alpha_\ell t \right)^{k+1},
\]
and $u_t$ converges. Finally, since $u_t$ is analytic by construction and the twisting of an $L_\infty$-algebra is analytic by Proposition \ref{derivativeproperty}, then $MC(u_t)$ is an analytic map. Thus $MC(u_t)=0$ if and only if $\partial^k_{t=0} MC(u_t)=0$ for all $k$. We conclude that $u_t: [0,2) \rightarrow MC(V,\ell)$ takes values in the Maurer-Cartan elements. 
\end{proof}
\end{thm}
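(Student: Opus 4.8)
The plan is to feed the explicit formal Maurer--Cartan element of Proposition~\ref{solution} into the quantitative estimates already established, show that the resulting power series converges on $[0,2)$, and then promote the formal solution to an honest one by an analyticity argument.

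First I would fix $u_1\in\ker\ell_1$ with $\norm{u_1}<\frac{1}{12\norm{h_1}\alpha_\ell}$, set $u_0=0$ and $u_{k+1}=-h_1\big(Obs^k(u[\![t]\!])\big)$ as in Proposition~\ref{solution}, so that $u[\![t]\!]=\sum_k\frac{u_k}{k!}t^k$ is a formal Maurer--Cartan element, i.e. $\ell_1(u_{k+1})+Obs^k(u[\![t]\!])=0$ for all $k$. Since $(V,\ell)$ is $N$-strict, the quantity $\alpha_\ell=\sum_{i=1}^N\frac{\norm{\ell_i}}{i!}$ is finite, so the hypothesis of Lemma~\ref{boundcoeff} holds with $\alpha=\alpha_\ell$ and yields $\frac{\norm{u_k}}{k!}\le\norm{u_1}^k(\norm{h_1}\alpha_\ell)^{k-1}C_k$, where $(C_k)$ is the sequence with $C_1=C_2=1$ and $C_k=\sum_{i=2}^k\sum_{\vec{r}_i=k}C_{r_1}\cdots C_{r_i}$.

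Next I would identify $(C_k)$ with a classical combinatorial sequence, namely the super-Catalan (little Schröder) numbers collected in Appendix~A (Proposition~\ref{SCN}), whose exponential growth rate is $3+2\sqrt2<6$; hence $C_k\le 6^k$ for $k\gg1$, the sub-exponential corrections being harmless. Substituting, $\frac{\norm{u_k}}{k!}\le\frac{1}{\norm{h_1}\alpha_\ell}\big(6\norm{u_1}\norm{h_1}\alpha_\ell\big)^k$ for large $k$. Writing $q:=6\norm{u_1}\norm{h_1}\alpha_\ell$, the bound on $\norm{u_1}$ forces $q<\frac12$, so for $t\in[0,2)$ the series $\sum_k\frac{\norm{u_k}}{k!}t^k$ is, up to finitely many initial terms, dominated by a geometric series of ratio $qt<1$; it therefore converges absolutely and uniformly on compact subsets of $[0,2)$, and $u_t:=\sum_k\frac{u_k}{k!}t^k$ is a real-analytic path $[0,2)\to\R^n$ with $u_0=0$ and $\partial_{t=0}u_t=u_1$. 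Note that for an $N$-strict algebra $D_\ell=\R^n$, so the path automatically stays inside the domain of convergence.

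Finally I would argue that $u_t$ actually solves $MC=0$ and not merely formally. By construction all Taylor coefficients of $MC(u_t)=\ell^{u_t}_0$ at $t=0$ vanish, that being the content of Equation~\ref{melu}. On the other hand $t\mapsto MC(u_t)$ is real-analytic: the twisting map $u\mapsto\ell^u$, hence $u\mapsto\ell^u_0$, is real-analytic on $D_\ell=\R^n$ by Proposition~\ref{derivativeproperty}, and it is composed with the analytic path $t\mapsto u_t$. A real-analytic function on the connected interval $[0,2)$ all of whose derivatives vanish at $0$ is identically zero, so $MC(u_t)\equiv0$ and $u_t$ takes values in $MC(V,\ell)$, giving the desired deformation. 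The main obstacle is the middle step: correctly reading off the combinatorics of the coefficient bound, identifying the $C_k$ as super-Catalan numbers, and controlling their growth by $6^k$ --- it is precisely this base $6$ together with the room needed to let $t$ run up to $2$ that dictates the constant $12$ in the hypothesis. Everything else is the routine estimate plus the standard uniqueness-of-analytic-continuation argument.
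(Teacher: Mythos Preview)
Your proposal is correct and follows essentially the same route as the paper's own proof: construct the formal solution via Proposition~\ref{solution}, bound the coefficients through Lemma~\ref{boundcoeff}, identify the $C_k$ as super-Catalan numbers with growth $3+2\sqrt{2}<6$, and finish with the identity theorem for real-analytic functions. Your write-up even makes explicit a few points the paper leaves implicit (why $\alpha_\ell<\infty$, why $D_\ell=\R^n$ in the $N$-strict case, and how the factor $12=6\cdot 2$ arises from the growth bound together with the interval $[0,2)$).
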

Let 
\[
	B_{h_1,\ell}:= \left\{ v\in \ker \ell_1\mid \norm{v}< \dfrac{1}{12 \norm{h_1}\alpha_\ell}\right\},
\]
and define 
\[
	\begin{split}
		\psi: B_{h_1,\ell}&\rightarrow C^\infty([0,2),\R^n) \\
		  v&\mapsto v_t
		\end{split}
\]
the map that associates to each cocycle $v$ the path $v_t$ given by Theorem \ref{intpath}. 
\begin{lemma}
For every $s\in [0,1]$ and $v\in B_{h_1,\ell}$ we have that $\psi(sv)(t)=\psi(v)(st)$. 
\begin{proof}
Let $v_t=\psi_h(v)(t)$ and $(sv)_t=\psi_h(sv)(t)$. Denote their coefficients by $v_k$ and $(sv)_k$ respectively. We claim that $(sv)_{k+1}= s^{k+1} v_k$. Indeed, by Proposition \ref{solution} and definition \ref{obstructions} it is easy to see that this holds. Consequently 
\[
	\psi_h(v)(st)= \sum_{k\geq 0} v_k (st)^k = \sum_{k\geq 0} (sv)_k t^k = \psi(sv)(t).\qedhere 
\]
\end{proof}
\end{lemma}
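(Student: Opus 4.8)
The plan is to unwind the recursive definitions and show that the claimed identity $(sv)_{k+1} = s^{k+1} v_k$ — wait, let me re-read: the claim in the lemma is $\psi(sv)(t) = \psi(v)(st)$, and the internal claim is that the coefficients satisfy $(sv)_k = s^k v_k$ (the excerpt writes $(sv)_{k+1} = s^{k+1}v_k$ which appears to be an index typo for $(sv)_{k+1} = s^{k+1} v_{k+1}$). So the real work is an induction on $k$ showing the coefficients of the rescaled path are obtained from the original ones by the corresponding power of $s$.

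\begin{proof}
Write $v_t = \psi(v)(t) = \sum_{k\geq 0} \frac{v_k}{k!}t^k$ and $(sv)_t = \psi(sv)(t) = \sum_{k\geq 0}\frac{(sv)_k}{k!}t^k$ for the two formal Maurer--Cartan elements produced by Theorem \ref{intpath} via the recursion of Proposition \ref{solution}. We claim that $(sv)_k = s^k v_k$ for all $k$. For $k=0$ both sides are $0$, and for $k=1$ we have $(sv)_1 = sv = s v_1$ by construction. Assume $(sv)_j = s^j v_j$ for all $j\leq k$. By the formula \ref{obstructions} for the obstruction, $Obs^k(\cdot)$ is a sum of terms $\binom{k+1}{\vec r_i}\frac{1}{i!}\ell_i(u_0^{r_1}\odot\cdots\odot u_0^{r_i})$ over partitions $\vec r_i = k+1$ with $i\geq 2$ (here $u_0 = 0$, so the only surviving terms involve the $u_0^{r_j}$ with $r_j\geq 1$). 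Since $r_1 + \cdots + r_i = k+1$ and each $\ell_i$ is multilinear, substituting the rescaled coefficients $(sv)_{r_j} = s^{r_j}v_{r_j}$ pulls out a total factor $s^{r_1+\cdots+r_i} = s^{k+1}$ from every term; hence $Obs^k((sv)_t) = s^{k+1}\, Obs^k(v_t)$. Applying $-h_1$, which is linear, gives $(sv)_{k+1} = -h_1(Obs^k((sv)_t)) = s^{k+1}(-h_1(Obs^k(v_t))) = s^{k+1}v_{k+1}$, completing the induction.

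It remains to check that both sides are genuinely defined (i.e.\ $sv\in B_{h_1,\ell}$) and that the rescaled series has the right radius of convergence. For $s\in[0,1]$ we have $\norm{sv} = s\norm{v} \leq \norm{v} < \frac{1}{12\norm{h_1}\alpha_\ell}$, so $sv\in B_{h_1,\ell}$ and $\psi(sv)$ is well-defined on $[0,2)$. Moreover, the coefficient bound of Lemma \ref{boundcoeff} for $(sv)_t$ reads $\frac{\norm{(sv)_k}}{k!}\leq \norm{sv}^k(\norm{h_1}\alpha_\ell)^{k-1}C_k \leq \norm{v}^k(\norm{h_1}\alpha_\ell)^{k-1}C_k$, which is exactly the bound for $v_t$; thus the series $\sum_k \frac{(sv)_k}{k!}(st)^k$ converges whenever $\sum_k\frac{v_k}{k!}t^k$ does, so the substitution $t\mapsto st$ is legitimate on $[0,2)$ (indeed on $[0,2/s)\supseteq[0,2)$).

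Finally, using the claim we compute directly
\[
	\psi(sv)(t) = \sum_{k\geq 0}\dfrac{(sv)_k}{k!}t^k = \sum_{k\geq 0}\dfrac{s^k v_k}{k!}t^k = \sum_{k\geq 0}\dfrac{v_k}{k!}(st)^k = \psi(v)(st),
\]
as desired.
\end{proof}

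The one point that genuinely needs care is the homogeneity of the obstruction map under rescaling of the $u_k$'s: one has to be sure that in the definition \ref{obstructions} every surviving summand involves exactly $k+1$ "copies" of the variables $u_0^{r_1},\dots,u_0^{r_i}$ counted with the weights $r_j$ summing to $k+1$, so that the $s$-degree is uniformly $k+1$ across all terms; this is immediate from the partition constraint $\vec r_i = k+1$, but it is the crux of why the clean rescaling identity holds. Everything else is bookkeeping with the convergence bounds already established in Lemma \ref{boundcoeff} and Theorem \ref{intpath}.
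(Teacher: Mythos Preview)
Your proof is correct and follows exactly the same route as the paper's: induction on $k$ showing $(sv)_k = s^k v_k$ via the homogeneity of $Obs^k$ under the partition constraint $\vec{r}_i = k+1$, followed by the obvious coefficient comparison. You have simply filled in the details the paper elides (and correctly spotted that the paper's ``$(sv)_{k+1}=s^{k+1}v_k$'' is a typo for $s^{k+1}v_{k+1}$, and that the displayed sum there omits the $1/k!$ factors); your added remarks on $sv\in B_{h_1,\ell}$ and convergence are not strictly needed but do no harm.
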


\begin{thm}\label{intdgla}
Let $\Psi: B_{h_1,\ell}\rightarrow MC(V,\ell)$ be given by $\Psi(v)= \psi(v)(1)$. Then $\Psi$ is smooth. In fact, there exists an open subset $0\in U\subset B_{h_1,\ell}$ such that $\Psi: U\subset \ker \ell_1 \rightarrow  MC(V,\ell)\subset \R^n$ is an embedding. 
\begin{proof}
By Theorem \ref{intpath} and the previous lemma we can compute
\[
	 \partial_{s=0} \Psi( s v)= \partial_{s=0} \Psi_h(v)(s)= v,
\]
and then $d_0 \Psi $ exists and is the identity, which implies the result.
\end{proof}
\end{thm}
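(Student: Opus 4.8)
The plan is to recognise $\Psi$ as the time-$1$ evaluation of the explicit formal Maurer--Cartan element built in Proposition \ref{solution}, to upgrade the convergence estimates of Lemma \ref{boundcoeff} into real-analytic dependence on the initial cocycle $v=u_1$, and then to read off $d_0\Psi$ from the rescaling lemma and conclude with the immersion theorem.

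First I would record the homogeneity of the coefficients. By Proposition \ref{solution} we have $u_0=0$, $u_1=v$, and $u_{k+1}=-h_1(Obs^k(u_t))$, where by \ref{obstructions} the obstruction $Obs^k(u_t)$ is a finite sum of terms $\ell_i(u_{r_1}\odot\cdots\odot u_{r_i})$ with $r_1+\cdots+r_i=k+1$ and each $r_j\ge 1$ (any term with some $r_j=0$ drops out since $u_0=0$). An easy induction then shows that each $u_k=u_k(v)$ is a polynomial map $\ker\ell_1\to\R^n$, homogeneous of degree $k$ in $v$; this is precisely the identity $(sv)_k=s^k v_k$ underlying the rescaling lemma. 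Consequently $\Psi(v)=\psi(v)(1)=\sum_{k\ge 1}\frac{1}{k!}u_k(v)$ is a genuine power series in $v$.

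Next I would promote convergence to analyticity. The estimate $\frac{\norm{u_k(v)}}{k!}\le\norm{v}^k(\norm{h_1}\alpha_\ell)^{k-1}C_k$ of Lemma \ref{boundcoeff}, with $C_k$ the super-Catalan numbers of Proposition \ref{SCN}, is monotone in $\norm{v}$; fixing $r<\tfrac{1}{12\norm{h_1}\alpha_\ell}$ and using the asymptotic growth \ref{asymp} exactly as in the proof of Theorem \ref{intpath}, I obtain that on the ball $\{\norm{v}\le r\}$ the terms $\frac{\norm{u_k(v)}}{k!}$ are, for $k$ large, dominated by a geometric series of ratio $6r\norm{h_1}\alpha_\ell<\tfrac12$. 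Hence the series defining $\Psi$ converges absolutely and uniformly on each such ball, so $\Psi$ is real-analytic (in particular smooth) on $B_{h_1,\ell}$ and may be differentiated term by term.

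It then remains to compute $d_0\Psi$ and apply the immersion theorem. By the rescaling lemma, $\Psi(sv)=\psi(sv)(1)=\psi(v)(s)$ for $s\in[0,1]$, and $s\mapsto\psi(v)(s)$ is the deformation produced by Theorem \ref{intpath}, which satisfies $\partial_{s=0}\psi(v)(s)=u_1=v$; therefore $\partial_{s=0}\Psi(sv)=v$ for every $v\in\ker\ell_1$, so $d_0\Psi$ is the inclusion $\ker\ell_1\hookrightarrow\R^n$ and in particular is injective. Since $\Psi$ is smooth with injective differential at $0$, its rank is locally constant and maximal near $0$, so by the constant rank (local immersion) theorem there is an open $0\in U\subset B_{h_1,\ell}$ on which $\Psi$ is an embedding; as $\Psi$ takes values in $MC(V,\ell)$, this gives the asserted embedding $\Psi\colon U\subset\ker\ell_1\to MC(V,\ell)\subset\R^n$. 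I expect the only genuine work to be the analyticity step: once $v\mapsto\Psi(v)$ is known to be smooth, the rescaling lemma hands us $d_0\Psi$ for free, and the rest is the standard constant-rank package.
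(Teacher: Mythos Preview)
Your proof is correct and follows the same route as the paper: use the rescaling lemma to compute $\partial_{s=0}\Psi(sv)=v$, then invoke the immersion theorem. The paper's own proof is terser and simply asserts that $d_0\Psi$ exists from the directional-derivative computation; your analyticity step (uniform convergence of the homogeneous terms $u_k(v)/k!$ on balls via Lemma~\ref{boundcoeff} and Proposition~\ref{asymp}) supplies the smoothness justification that the paper leaves implicit, so your argument is a fleshed-out version of the same idea rather than a different one.
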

\begin{coro}\label{MaxIntNew}
Let $(M\overset{\sigma}{\rightarrow} E\overset{\Phi}{\rightarrow} F)\curvearrowleft G$ be an analytic deformation problem with $(V,\ell)$ the associated $L_\infty$-structure described by Theorem \ref{arjen}. Assume that $(V,\ell)$ is $N$-strict. If $H^1(V,\ell)=0$, the problem is maximally integrable at $x_0$. In fact, around $x_0$, $\sigma^{-1}(0)$ has the smooth parametrization given by $\varphi^{-1}\circ \Psi: U\subset \ker\ell_1 \rightarrow M$. 
\begin{proof}
The map $\varphi^{-1}\circ\Psi: U\subset \ker\ell_1 \rightarrow \sigma^{-1}(0)\subset M$ gives an embedding.  The same argument as the one given at the end of the alternative proof of Theorem \ref{rigiditysurvey} proves that this embedding parametrizes, locally, all the possible zeros of $\sigma$.  
\end{proof}
\end{coro}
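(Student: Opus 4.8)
The plan is to combine Theorems~\ref{arjen} and~\ref{intdgla} to manufacture the parametrizing submanifold, and then to transplant the transversality argument from the end of the alternative proof of Theorem~\ref{rigiditysurvey} in order to see that it exhausts $\sigma^{-1}(0)$ near $x_0$. I would begin by noting that the standing assumption $\sigma(x_0)=0$ gives $MC(0)=0$, so $(V,\ell)$ is an honest $L_\infty$-algebra, $\ell^0=\ell$, and $H^1(V,\ell)=H^1(V,\ell_1)=0$; being $N$-strict, it meets the hypotheses of Theorem~\ref{intdgla} (the $N$-strictness enters, through Lemma~\ref{boundcoeff} and Theorem~\ref{intpath}, to make the formal Maurer-Cartan series converge). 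Theorem~\ref{intdgla} then furnishes an open $0\in U\subset\ker\ell_1$ and an embedding $\Psi\colon U\to\R^n$ whose image lies in $MC(V,\ell)$ and with $d_0\Psi$ the inclusion $\ker\ell_1\hookrightarrow\R^n$. Shrinking $U$ so that $\Psi(U)\subset\varphi(U')\cap MC(V,\ell)$ with $U'$ as in Theorem~\ref{arjen}, and composing with the bijection $\varphi^{-1}\colon\varphi(U')\cap MC(V,\ell)\to U'\cap\sigma^{-1}(0)$, I obtain an embedding $\varphi^{-1}\circ\Psi\colon U\to\sigma^{-1}(0)\subset M$, whose image I call $S_{x_0}$. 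Under $\varphi$ the vertical derivative $d^v_{x_0}\sigma$ corresponds to $\ell_1\colon V_0\to V_1$, so $\dim S_{x_0}=\dim\ker\ell_1=\dim\ker d^v_{x_0}\sigma$, which by Proposition~\ref{Tsigma} is the largest dimension an integrating submanifold at $x_0$ can have.

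It then remains to show that $S_{x_0}$ exhausts $\sigma^{-1}(0)$ near $x_0$; together with the dimension count this gives maximal integrability. Working inside the chart, $\sigma^{-1}(0)$ corresponds to $MC^{-1}(0)\subset\R^n$ and $\Psi(U)$ to an embedded submanifold tangent at $0$ to $\ker\ell_1$. I would compose the chart with a further diffeomorphism of $\R^n$ fixing $0$ that straightens $\Psi(U)$ onto $\ker\ell_1\times\{0\}$; writing $\R^n=\ker\ell_1\oplus C$ in the new coordinates (so $\dim C=\mathrm{rk}\,\ell_1$), the local model $\widetilde{MC}\colon\ker\ell_1\oplus C\to A$ of $\sigma$ then satisfies $\widetilde{MC}(v,0)\equiv 0$.

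The key step I would take next is precisely the constant-rank argument of the rigidity proof: set $\Theta(v,c)=(v,\widetilde{MC}(v,c))$, a map into $\ker\ell_1\oplus A$ defined near $0$. From $\widetilde{MC}(v,0)\equiv 0$ the differential $d_0\widetilde{MC}$ vanishes on $\ker\ell_1\times\{0\}$; and since $\widetilde{MC}$ is $MC$ precomposed with a diffeomorphism of $\R^n$ whose differential at $0$ carries $\{0\}\times C$ onto a complement of $\ker\ell_1=\ker d_0 MC$ (on which $d_0 MC=\ell_1$ is injective), $d_0\widetilde{MC}$ restricts to an injection on $\{0\}\times C$. Hence $d_0\Theta(v,c)=(v,d_0\widetilde{MC}(0,c))$ is injective, so $\Theta$ is an immersion at $0$ and therefore a local embedding; since $\Theta(v,0)=(v,0)$, the set $\Theta^{-1}(\ker\ell_1\times\{0\})$ coincides near $0$ with $\ker\ell_1\times\{0\}$. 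Thus $\widetilde{MC}(v,c)=0$ forces $c=0$, i.e.\ $MC^{-1}(0)=\Psi(U)$ near $0$, equivalently $\sigma^{-1}(0)=S_{x_0}$ on some open $x_0\in W\subset M$. This is integrability at $x_0$ by a submanifold of dimension $\dim\ker d^v_{x_0}\sigma$, i.e.\ maximal integrability, with $\varphi^{-1}\circ\Psi\colon U\subset\ker\ell_1\to M$ the asserted smooth parametrization of $\sigma^{-1}(0)$ around $x_0$.

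The hard part is this last transversality step. By that point Theorems~\ref{arjen} and~\ref{intdgla} already hand over an embedded submanifold $S_{x_0}\subset\sigma^{-1}(0)$ of the expected dimension, but one must still exclude the possibility of extra, lower-dimensional branches of $\sigma^{-1}(0)$ clinging to $x_0$ off $S_{x_0}$; this is exactly what injectivity of $\ell_1$ on the complement $C$ forbids, via the immersion/constant-rank computation above. The only genuine care it demands is the bookkeeping of the identifications $T_{x_0}M\cong\R^n$, $E_{x_0}\cong A$, $d^v_{x_0}\sigma\leftrightarrow\ell_1$, and of the extra chart-straightening, exactly as in the closing lines of the alternative proof of Theorem~\ref{rigiditysurvey}.
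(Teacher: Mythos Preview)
Your proof is correct and follows exactly the approach the paper intends: invoke Theorem~\ref{intdgla} (and Theorem~\ref{arjen}) to produce the embedding $\varphi^{-1}\circ\Psi$, then run the immersion/constant-rank argument from the end of the alternative proof of Theorem~\ref{rigiditysurvey} to see that the image exhausts $\sigma^{-1}(0)$ near $x_0$. The paper's own proof merely references that argument without unpacking it; you have spelled out the straightening and the injectivity of $d_0\Theta$ explicitly, which is precisely what the paper leaves implicit.
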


\begin{rmk} In fact, we can also obtain the results of this section when $(V, \ell)$ satisfies a weaker condition than $N$-strictness. It is enough that $\ell_{|\odot^k V_0}=0$ for all $k\geq N$. For example, this condition is satisfied for the $L_\infty$-algebras controlling simultaneous deformations of associative algebras and their morphisms (see \cite[Corollary 6.5]{FMY}), and of Lie algebras and their morphisms (see \cite[Corollary 8.5]{FMY} and \cite[Lemma 2.6]{FZ}).
\end{rmk}

\appendix
\section{Appendix: Combinatorics}

In this appendix we collect some results which are useful in dealing with the coefficients of the higher order derivatives of the Maurer-Cartan equation. 
\subsection{Super-Catalan numbers}
We will now study the sequence of numbers 
\[
	C_k=\sum_{i=2}^{k}\sum_{r_1+\cdots+r_i=k} C_{r_1}\cdots C_{r_i},
\]
with initial condition $C_1=1$. Let $S$ be a non-empty set.
\begin{dfn}
A \textit{word of length} $n$ in $S$ is a sequence $w=w_1\ldots w_n$, with $w_i\in S$. The \textit{alphabet generated by} $S$ is the free monoid generated by the words
\[
	Alf(S)=\langle w_1\ldots w_m\mid m\in\N \ \text{and} \ w_j\in S \rangle, 
\]
with multiplication given by concatenation 
\[
	v\cdot w:= v_1\ldots v_n w_1\ldots w_m,
\]
and identity given by the empty word. 
\end{dfn}
We call an element $w\in Alf(S)$ a \textit{word} and denote its length by $\grau{w}=n$. If $\grau{w}=1$ we call it a \textit{letter}. 
\begin{dfn}
A \textit{bracketing} of a word $w\in Alf(S)$ is obtained by expressing $w$ as the product of $2$ or more non-empty words $w=u_1\ldots u_k$, unless $w$ is a letter, and then inductively bracketing each $u_i$, until we get letters. 
\end{dfn}
\begin{exm}
Let $1 \ 2 \ 3 \in Alf(\N)$. The possible bracketings are given by 
\[
	(1)((2)(3)), \ ((1)(2))(3) \ \text{and} \ (1)(2)(3).
\]
\end{exm}
\begin{dfn}
The \textit{super-Catalan} $k$-number, denoted by $C_k$, is given by the number of different ways of bracketing a word $w\in Alf(S)$ with $\grau{w}=k$. 
\end{dfn}
By the previous example $C_3=3$. Indeed, the super-Catalan numbers satisfy the following recursive formula:
\begin{prop}\label{SCN}
Let $C_k$ be the super-Catalan $k$-number. Then 
\[
	C_{k}= \sum_{i=2}^k \sum_{r_1+\cdots+ r_i=k} C_{r_1} \cdots C_{r_i}. 
\]
\begin{proof}
The proof is by strong induction. Clearly the only bracketings of $1$ and $1 \ 2$ are $(1)$ and $(1)(2)$, i.e. $C_1=C_2=1$. Accordingly  
\[
	C_3= C_1C_2+C_2C_1+C_1C_1C_1= 3,
\]
and the base case follows. Suppose the result holds for $1\leq p\leq k-1$ and take the word $w\in Alf(\N)$ given by 
\[
	w= 1 \ 2 \ \cdots \ k. 
\]
Fix $2\leq i\leq k$ and a partition $r_1+\ldots +r_i=k$. Decompose $w$ by 
\[
	w= u_1 u_2 \cdots u_i,
\]
with $\grau{u_i}=r_i$. All the possible bracketings of this decomposition are given by the product of all the possible bracketings of each of the factors, i.e. the number of bracketings of this decomposition is $C_{r_1}\cdots C_{r_i}$. The number $i$ controls in how many different words can be decomposed $w$, i.e. from $2$ to $k$. The partitions $r_1+\ldots +r_i=k$ give all the possible decomposition into $i$ words of a word with $k$ letters. Since this covers all the possible decomposition of $w$ into smaller words, we get a bijection between the number of bracketings and the formula above.
\end{proof}
\end{prop}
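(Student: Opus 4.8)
The plan is to read the recursion straight off the recursive definition of a bracketing. The key observation is that, for $k \geq 2$, a bracketing of a word $w = w_1 \cdots w_k$ is nothing but (a) a top-level decomposition $w = u_1 u_2 \cdots u_i$ into $i \geq 2$ non-empty consecutive subwords, together with (b) a choice of a bracketing for each factor $u_j$. So I would set up a bijection between the set of bracketings of $w$ and the set of such pairs, and then count. (The number $C_k$ is finite and depends only on $k$, not on the chosen word or alphabet, since a bracketing is a finite nested datum bottoming out at letters; this is used implicitly throughout.)

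First I would dispatch the base cases: a word of length $1$ is a letter and its only bracketing is the letter itself, so $C_1 = 1$; a word $w_1 w_2$ of length $2$ can be written as a product of $\geq 2$ non-empty consecutive subwords only as $w_1 \cdot w_2$, and each factor, being a letter, cannot be split further, so $C_2 = 1$ (which is also what the formula gives, namely $C_1 C_1$). For $k \geq 2$ I would argue as follows. Giving the top-level decomposition in (a) is the same as choosing $i \in \{2, \dots, k\}$ and a composition $r_1 + \cdots + r_i = k$ with all $r_j = \grau{u_j} \geq 1$, because the $u_j$ are then forced to be the consecutive blocks of $w$ of the prescribed lengths. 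Since $i \geq 2$ and each $r_j \geq 1$, we have $r_j \leq k-1$, so each $u_j$ is strictly shorter than $w$ and hence has exactly $C_{r_j}$ bracketings by definition of the super-Catalan numbers; the bracketings of $u_1, \dots, u_i$ being chosen independently, the number of bracketings of $w$ with top-level shape $(r_1, \dots, r_i)$ is $C_{r_1} \cdots C_{r_i}$. Summing over all admissible data $(i; r_1, \dots, r_i)$ — a finite sum, since $i \leq k$ and there are finitely many compositions of $k$ — yields Proposition \ref{SCN}. Strong induction on $k$, as in the informal phrasing ``assume the result for all $p < k$'', is a convenient way to organize this, but it is not logically essential: $C_r$ is by definition the number of bracketings of a length-$r$ word, with no reference to the recursion.

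The step I expect to need the most care is checking that the correspondence
\[ \text{bracketing of } w \ \longleftrightarrow \ \bigl(\text{top-level decomposition of } w\,;\ \text{bracketings of the pieces}\bigr) \]
really is a bijection: one has to verify that the outermost grouping of a bracketing (both the number $i$ of top blocks and their lengths) is canonically recoverable from the bracketing, so that distinct top-level decompositions never produce the same bracketing and nothing is double-counted, and that the recursive unwinding terminates — which it does, since the lengths strictly decrease at each step. This is purely a matter of parsing the definition carefully; there is no genuine computation involved.
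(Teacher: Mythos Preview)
Your proposal is correct and follows essentially the same approach as the paper: both arguments count bracketings of a length-$k$ word by first fixing the top-level decomposition into $i\ge 2$ consecutive blocks of lengths $r_1,\dots,r_i$ and then multiplying the numbers of bracketings of the blocks. You are also right that the ``strong induction'' framing is cosmetic here, since $C_r$ is \emph{defined} as the number of bracketings rather than by the recursion; the paper phrases it as induction but never actually invokes the inductive hypothesis beyond this definitional fact.
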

The following asymptotic growth can be found at the Online Encyclopedia of Integer Sequences $(OEIS, A001003, 2025)$\cite{OeisSuperCatalan}.
\begin{prop}\label{asymp}
The super-Catalan numbers have the asymptotic growth
\[
	C_k\sim W\dfrac{(3+\sqrt{8})^k}{k^{3/2}},
\]
where $W=\dfrac{1+\sqrt{2}}{2^{7/4}\sqrt{\pi}}$.
\end{prop}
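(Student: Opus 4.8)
The plan is to reduce the statement to a routine application of singularity analysis. First I would package the $C_k$ into the ordinary generating function $f(x)=\sum_{k\ge 1}C_k x^k$. A word of length $k\ge 2$ is bracketed by choosing a concatenation $w=u_1\cdots u_i$ into $i\ge 2$ nonempty blocks and then bracketing each block; this is exactly the content of Proposition \ref{SCN}, together with $C_1=1$, and it translates into the functional equation
\[
	f-x=\sum_{i\ge 2}f^i=\frac{f^2}{1-f},
\]
equivalently the quadratic relation $2f^2-(1+x)f+x=0$. Solving it and selecting the branch with $f(0)=0$ yields the closed form $f(x)=\bigl(1+x-\sqrt{1-6x+x^2}\bigr)/4$.

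Next I would locate the dominant singularity. The discriminant factors as $1-6x+x^2=(1-x/\rho)(1-\rho x)$ with $\rho=3-2\sqrt 2$ the smaller root (note $\rho^{-1}=3+2\sqrt 2=3+\sqrt 8$), so $f$ is holomorphic on $\{|x|<\rho\}$ with a unique dominant singularity at $x=\rho$; this already forces the exponential rate $(3+\sqrt 8)^k$. Because $f$ is algebraic with only the two branch points $\rho$ and $\rho^{-1}$, it continues analytically to a slit neighbourhood of $\rho$, so the Flajolet--Odlyzko transfer theorem applies.

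I would then extract the singular expansion at $\rho$. Writing $\sqrt{1-6x+x^2}=\sqrt{1-x/\rho}\,\sqrt{1-\rho x}$ and Taylor-expanding the factor $\sqrt{1-\rho x}$, which is regular at $x=\rho$, one gets
\[
	f(x)=\frac{1+\rho}{4}-\frac{\sqrt{1-\rho^2}}{4}\,\sqrt{1-x/\rho}+O\bigl((1-x/\rho)^{3/2}\bigr),
\]
with $\sqrt{1-\rho^2}=2^{5/4}(\sqrt 2-1)$. The entire part and the higher-order half-integer terms are negligible, and since $[x^k](1-x/\rho)^{1/2}=\binom{1/2}{k}(-1)^k\rho^{-k}\sim -\rho^{-k}/(2\sqrt\pi\,k^{3/2})$, the asymptotics here resting on $\Gamma(-\tfrac12)=-2\sqrt\pi$, the transfer theorem gives
\[
	C_k\sim\frac{\sqrt{1-\rho^2}}{8\sqrt\pi}\cdot\frac{(3+\sqrt 8)^k}{k^{3/2}}=\frac{\sqrt 2-1}{2^{7/4}\sqrt\pi}\cdot\frac{(3+\sqrt 8)^k}{k^{3/2}},
\]
which is the asserted asymptotic (the constant $W$ of the statement is the one attached to the OEIS indexing $a_n:=C_{n+1}$, reconciled via $(\sqrt 2-1)(3+\sqrt 8)=1+\sqrt 2$).

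The algebra above is entirely routine; the single point that genuinely needs care is the hypothesis of the transfer theorem, i.e. verifying that $f$ is $\Delta$-analytic at $\rho$ (it continues holomorphically to a domain $\{|x|<\rho+\varepsilon\}\setminus[\rho,\rho+\varepsilon)$), which here is immediate from the explicit radical form but is the step one must not skip. If one wishes to avoid the transfer theorem, an equivalent elementary route is to subtract the polynomial part, expand $(1-x/\rho)^{1/2}$ as a binomial series, and estimate its $k$-th coefficient directly by Stirling's formula (now one owes a by-hand bound on the tail); alternatively, Darboux's method extracts the same leading term from the same singular expansion.
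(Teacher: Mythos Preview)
Your argument is correct and complete: the functional equation for the generating function, the quadratic solution, the location of the dominant singularity at $\rho=3-2\sqrt2$, and the transfer of the square-root singular term are all carried out cleanly. You also correctly flag the indexing issue: the constant $W=(1+\sqrt2)/(2^{7/4}\sqrt\pi)$ quoted in the statement is the OEIS constant for the sequence $a_n=C_{n+1}$, and with the paper's indexing one obtains instead $(\sqrt2-1)/(2^{7/4}\sqrt\pi)$, the two being related by a factor of $3+\sqrt8$. For the application in Theorem~\ref{intpath} only the exponential growth rate matters, so this discrepancy is harmless there, but it is a genuine inaccuracy in the stated proposition as written.

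By contrast, the paper offers no proof at all: it simply cites the asymptotic from the OEIS entry A001003. So your route is not merely different but strictly more informative---it supplies the actual derivation via analytic combinatorics that the paper defers to an external reference, and in doing so it uncovers the off-by-one in the constant. The trade-off is length: the paper treats the asymptotic as a black box because only the crude bound $C_k\le 6^k$ for large $k$ is needed downstream, whereas your argument gives the full leading term at the cost of invoking the Flajolet--Odlyzko machinery (or an equivalent Darboux/Stirling computation).
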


\subsection{Partitions of a number}\label{appendix-partition}
Let $k,i \in\N$ with $i<k$. 
\begin{dfn}
An $i$-\textit{partition of} $k$ is a sequence $(r_1,\ldots, r_i)\in \N^i$ such that $r_1+\cdots +r_i= k$. We denote it by $\vec{r}_i=k$ or $\vec{r}_i$ when there is no risk of confusion. An \textit{ordered} $i$-\textit{partition} of $k$ is a partition $\vec{r}_i$ such that $r_1\leq \cdots \leq r_i$.
\end{dfn}
\begin{dfn}
The \textit{multinomial coefficient of} $k$ \textit{associated with} $\vec{r}_i$ is defined to be 
 \[
 	\binom{k}{ \vec{r}_i}:= \binom{k}{r_1, \ldots, r_i}= \dfrac{k!}{r_1!\cdots r_i!}.
 \]
\end{dfn}
The permutation group $\sigma\in S_i$ acts on partitions $\vec{r}_{i}$ by 
\begin{equation*}\label{action}
(\sigma\cdot r)_j=r_{\sigma(j)}.
\end{equation*}
Two partitions that are on the same orbit are called \textit{equivalent}. Let $S_{i,{\vec{r}}}$ be the isotropy of the $S_i$ action on $\vec{r}_i$ and $S_i(\vec{r})$ its orbit. Denote by $\#$ the cardinality of a set. Then 
\begin{equation}\label{dimorb}
\# S_i (\vec{r})= \dfrac{i!}{\# S_{i,{\vec{r}}}}.
\end{equation}
We want an explicit description for $\# S_i(\vec{r})$.  To do so, notice that every partition $\vec{r}_{i}$ can be decomposed as 
\begin{equation}\label{factorization}
r_{a_1}+\overset{b_1}{\cdots}+r_{a_1}+\cdots+r_{a_{s}}+\overset{b_s}{\cdots}+r_{a_{s}}= k,
\end{equation}
with $r_{a_1}< \cdots< r_{a_s}$ and $b_1+\cdots+ b_s=i$. We call it the \textit{factorization} of  $\vec{r}_{i}$ \textit{into repeated factors} and denote it by $\vec{r}_{b_1, \ldots, b_s}$. The following lemma follows by construction. 
\begin{lemma}\label{repeatedfactor}
Let $\vec{r}_{i}$ be a $i$-partition of $k$ and $\vec{r}_{b_1, \ldots, b_s}$ its factorization into repeated factors. Then $\# S_{i,{\vec{r}}}= b_1! \cdots b_s!$.
\end{lemma}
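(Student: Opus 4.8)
The plan is to identify the isotropy group $S_{i,\vec{r}}$ explicitly as a block (Young) subgroup of $S_i$ and then simply count it. First I would unwind the definition: since the $S_i$-action is $(\sigma\cdot r)_j = r_{\sigma(j)}$, a permutation $\sigma\in S_i$ lies in $S_{i,\vec{r}}$ precisely when $r_{\sigma(j)}=r_j$ for every $j\in\{1,\dots,i\}$, i.e. when $\sigma$ preserves the value carried by each index. On the other hand, the factorization into repeated factors \ref{factorization} records exactly that the list $(r_1,\dots,r_i)$ attains $s$ distinct values $r_{a_1}<\cdots<r_{a_s}$, with $r_{a_\ell}$ occurring $b_\ell$ times and $b_1+\cdots+b_s=i$. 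Equivalently, the map $j\mapsto r_j$ partitions the index set $\{1,\dots,i\}$ into its level sets $L_\ell := \{\,j : r_j = r_{a_\ell}\,\}$, with $\#L_\ell = b_\ell$ and the $L_\ell$ pairwise disjoint.

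The key step is the observation that $r_{\sigma(j)}=r_j$ for all $j$ if and only if $\sigma(L_\ell)=L_\ell$ for every $\ell$. Indeed, the condition $r_{\sigma(j)}=r_j$ says that $\sigma(j)$ lies in the same level set as $j$, so $\sigma$ maps each $L_\ell$ into itself and hence, being a bijection of the finite set $\{1,\dots,i\}$ that permutes the disjoint pieces, onto itself; conversely, if $\sigma$ stabilizes each $L_\ell$ setwise then $\sigma(j)$ and $j$ carry the same value for every $j$. Therefore $S_{i,\vec{r}}$ is exactly the subgroup of $S_i$ stabilizing the partition $\{L_1,\dots,L_s\}$ blockwise, which (because the blocks are disjoint) is the internal direct product $\mathrm{Sym}(L_1)\times\cdots\times\mathrm{Sym}(L_s)$.

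Counting then finishes the argument: $\#S_{i,\vec{r}} = \prod_{\ell=1}^{s}\#\,\mathrm{Sym}(L_\ell) = \prod_{\ell=1}^{s} b_\ell! = b_1!\cdots b_s!$, which is the claim. There is essentially no obstacle here; the only points needing a little care are the backward direction of the displayed equivalence (that stabilizing each level set setwise already forces $r_{\sigma(j)}=r_j$) and that disjointness of the $L_\ell$ is what makes the stabilizer split as a genuine direct product rather than merely surject onto the product of symmetric groups.
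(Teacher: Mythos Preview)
Your argument is correct and is exactly the unpacking of the paper's one-line justification (``follows by construction''): the factorization into repeated factors records the level sets of $j\mapsto r_j$, and the isotropy is the Young subgroup $\mathrm{Sym}(L_1)\times\cdots\times\mathrm{Sym}(L_s)$, giving $b_1!\cdots b_s!$.
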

By Equation \ref{dimorb} and Lemma \ref{repeatedfactor} we know that
\begin{equation}\label{dimorb2}
	\#S_i(\vec{r})= \dfrac{i!}{b_1! \cdots b_s!}.
\end{equation}

\section{Appendix: Higher order derivatives}

In this Appendix we prove a formula which is used in Proposition \ref{dMC1} to find obstructions in terms of higher order derivatives of the Maurer-Cartan equation; it is also used in Proposition \ref{aux} to prove that those obstructions are cohomological.

Let $(V,\ell)$ be an $L_\infty$-algebra with domain of convergence $D_\ell$ and take smooth paths $u_t,v_t: I\rightarrow D_\ell\subset V_0$. 
\begin{prop}\label{PropAuxB}
 For every $k\in\N$ the following formula holds
\begin{multline}\label{form1}
	\partial_t^{k} \ell_1^{u_t}(v_t)  \\
	=\ell_1^{u_t}(\partial_t^k v_t)+\sum_{i=1}^{k} \sum_{j=0}^{k-i} \sum_{\vec{r}_{i}=k-j }  \binom{k}{\vec{r}_{i}} \dfrac{1}{i!j!} \ \ell_{i+1}^{u_t}(\partial^{r_1}_t u_t\odot \cdots\odot \partial^{r_{i}}_t u_t\odot \partial^{j}_t v_t).
\end{multline}
\begin{proof}
The proof is by induction. The case $k=1$ follows from corollary \ref{derivative}. Assume that the result is true for $k$. Then 
\begin{multline}\label{dMC22}
	\partial_t^{k+1} \ell_1^{u_t}(v_t)= \partial_t \partial_t^k \ell_1^{u_t}(v_t) \\
	= \partial_t \ell_1^{u_t}(\partial_t^{k} v_t)+\sum_{i=1}^{k} \sum_{j=0}^{k-i} \sum_{\vec{r}_{i}=k-j }  \binom{k}{\vec{r}_{i}} \dfrac{1}{i!j!} \ \partial_t \ell_{i+1}^{u_t}(\partial^{r_1}_t u_t\odot \cdots\odot \partial^{r_{i}}_t u_t\odot \partial^{j}_t v_t).
\end{multline}
By Corollary \ref{derivative}, and after a careful verification, we have that \ref{dMC22} can be written as
\[
\ell_1^{u_t}(\partial_t^{k+1} v_t)+\sum_{m=1}^{k+1} \sum_{n=0}^{k+1-m} \sum_{\substack{\vec{p}_{m}=k+1-n \\ p_1\leq \ldots\leq p_m}} C_{\vec{p}_m,n} \ \ell_{m+1}^{u_t}(\partial^{p_1}_t u_t\odot \cdots\odot \partial^{p_{m}}_t u_t\odot \partial^{n}_t v_t),
\]
for some coefficients $C_{\vec{p}_m,n}$ to be determined. Notice that in the previous formula we are considering the sum over ordered partitions $\vec{p}_m$. On the other hand, by the symmetry of the $r_1,\ldots, r_{i}$ factors on the Equation \ref{form1}, we get that, for $k+1$, the Equation \ref{form1} is alternatively given by
\begin{multline*}\label{eqdef}
\partial_t^{k+1} \ell_1^{u_t} (v_t)= \ell_1^{u_t}(\partial_t^{k+1} v_t)\\
+\sum_{m=1}^{k+1} \sum_{n=0}^{k+1-m} \sum_{\substack{\vec{p}_{m}=k+1-n \\ p_1\leq \ldots\leq p_m}}  \binom{k+1}{\vec{p}_{m}} \dfrac{\# S_m(\vec{p})}{m!n!} \ \ell_{m+1}^{u_t}(\partial^{p_1}_t u_t\odot \cdots\odot \partial^{p_{m}}_t u_t\odot \partial^{n}_t v_t),
\end{multline*}
with $\# S_m(\vec{p})$ given by \ref{dimorb}. Accordingly, for the induction step we need to show that
\[
	C_{\vec{p}_m,n}= \binom{k+1}{\vec{p}_m} \dfrac{\# S_m(\vec{p})}{m!n!}.
\]
Thus, we need to find which summands of \ref{dMC22} contribute to each coefficient $C_{\vec{p}_m,n}$ and to determine that contribution. The strategy is the following: 
\begin{enumerate}
\item Fix $1\leq m\leq k+1$; $0\leq n\leq k+1-m$ and an ordered partition $p_1+\cdots +p_m=k+1-n$. 
\item Find all possible $1\leq i\leq k$, $0\leq j\leq k-i$ and ordered partitions $r_1+\cdots+r_i=k-j$ such that 
\begin{equation}\label{kira}
	\partial_t \ \ell_{i+1}^{u_t} (\partial^{r_1}_t u_t\odot\cdots\odot \partial^{r_i}_t u_t\odot \partial^j_t v_t)= \ell_{m+1}^{u_t} (\partial^{p_1}_t u_t\odot\cdots\odot \partial^{p_m}_t u_t\odot \partial^n_t v_t)+ \cdots .
\end{equation}
Denote by $S(m,n,\vec{p}_m)$ the set of all ordered partitions $\vec{r}_i=k-j$ satisfying \ref{kira}.
\item Take $\vec{r}_i\in S(m,n,\vec{p}_m)$. Use the Corollary \ref{derivative} to find the number of times that $\ell_{m+1}^{u_t} (\partial^{p_1}_t u_t\odot\cdots\odot \partial^{p_m}_t u_t\odot 				\partial^n_t v_t)$ appears in the expression $\partial_t \ \ell_{i+1}^{u_t} (\partial^{r_1}_t u_t\odot\cdots\odot \partial^{r_i}_t u_t\odot \partial^j_t v_t)$. Explicitly, if we denote this number by $A_{\vec{r}_i}$, we have that
\begin{multline}\label{Step3}
		\partial_t \ \ell_{i+1}^{u_t} (\partial^{r_1}_t u_t\odot\cdots\odot \partial^{r_i}_t u_t\odot \partial^j_t v_t) \\ =A_{\vec{r}_i} \cdot \ell_{m+1}^{u_t} (\partial^{p_1}_t u_t\odot\cdots\odot \partial^{p_m}_t u_t\odot 				\partial^n_t v_t)+ \cdots.
\end{multline}	
On the other hand, by the Equation \ref{dMC22}, the coefficient of the partition $\vec{r}_i$ in \ref{dMC22} is $\dfrac{1}{i!j!}\cdot \binom{k}{\vec{r}_i}.$ Hence, letting $C_{\vec{r}_i,j}$ be the contribution of $\vec{r}_i$ to $C_{\vec{p}_m,n}$, we conclude that
	\[
		C_{\vec{r}_i,j}= \dfrac{A_{\vec{r}_i}}{i!j!}\cdot \binom{k}{\vec{r}_i}.
	\]
\item Notice that if $\vec{r}_i\in S(m,n,\vec{p}_m)$ then $\vec{r}_i\cdot \sigma$ also contributes  to $C_{\vec{p}_m,n}$ for all $\sigma\in S_i$. Moreover, $A_{\vec{r}_i\cdot\sigma}= A_{\vec{r}_i}$ and the coefficients of the partitions $\vec{r}_i\cdot \sigma$ in \ref{dMC22} are all equal. Hence, the contribution of all the possible partitions $\vec{r}_i\cdot \sigma$  to $C_{\vec{p}_m,n}$ is given by 
\[
	\#\vec{r}_i:=\sum_{\sigma\in S_i} C_{\vec{r}_i\cdot\sigma,j}= \# S_i(\vec{r})\cdot C_{\vec{r}_i,j}.
\]
Accordingly, we need to find $ \# S_i(\vec{r})$.
\item Finally, we need to show that 
\[
	\sum_{(i,j,\vec{r}_i)\in S(m,n,\vec{p}_m)} \#\vec{r}_i= C_{\vec{p}_m,n}= \binom{k+1}{\vec{p}_m} \dfrac{\# S_m(\vec{p})}{m!n!}.
\]
\end{enumerate}
Let us now elaborate each of the previous steps:
\begin{enumerate}
	\item Take $m,n,\vec{p}_m$ and let 
		\[
			b_1 p_{a_1}+ \cdots+ b_s p_{a_s} = k+1-n,
		\]
 	be its factorization into repeated factors (see \ref{factorization}). Thus, by the Equation \ref{dimorb2}, we conclude that
		\begin{equation}\label{auxcoeff}
			\binom{k+1}{\vec{p}_m} \dfrac{\# S_m(\vec{p})}{m!n!}= \dfrac{1}{b_1!\cdots b_s! n! } \binom{k+1}{\vec{p}_m}.
		\end{equation}
	\item By the Corollary \ref{derivative}, we have four possible situations for $\vec{r}_i=k-j$ satisfying \ref{kira}:
		\begin{enumerate}
			\item $p_{a_1}=1$ and $\vec{r}_i$ is obtained by subtracting 1 to $p_1$:
				\[
					\vec{r}_{m-1}= (p_2, \ldots, p_m),
				\]
			so $(m-1,n,\vec{r}_{m-1})\in S(m,n,\vec{p}_m)$. 
			\item $p_{a_1}>1$ and $\vec{r}_i$ is obtained by subtracting 1 to $p_1$:
				\[
					\vec{r}_{m}=(p_{1}-1, p_2, \ldots, p_m),
				\]
			so $(m, n, \vec{r}_{m})\in S(m,n,\vec{p}_m)$. 
			\item $\vec{r}_i$ is obtained by subtracting 1 to $p_{a_d}$, for each $2\leq d\leq s$: define $1\leq q(d)\leq m$ by
				\[
					 q(d)= b_1+\cdots+ b_{d-1}+1.
				\] 
			We need to consider all the
				\[
					\vec{r}_m= (p_1,\ldots, p_{q(d)}-1, \ldots, p_m),
				\]
			so $(m,n,\vec{r}_m)\in S(m,n,\vec{p}_m)$.
			\item $j=n-1$, forcing 
				\[
					\vec{r}_m= \vec{p}_m,
				\]
			so $(m,n-1,\vec{r}_m)\in S(m,n,\vec{p}_m)$.
		\end{enumerate}
	These four cases cover all possible elements $(i,j,\vec{r}_i)\in S(m,n,\vec{p}_m)$. 
	\item Recall that $A_{\vec{r}_{i}}$ is defined by Equation \ref{Step3}. Using Corollary \ref{derivative} it follows that $A_{\vec{r}_{i}}$, in each of the previous four cases, is given by:
		\begin{enumerate}
			\item $A_{\vec{r}_i}=1$ and then 
				\[
					C_{\vec{r}_i,j}= \dfrac{1}{(m-1)!n!} \binom{k}{p_2,\cdots, p_m}.
				\]
			\item $A_{\vec{r}_i}=1$ and then 
				\[
					C_{\vec{r}_i,j}=\dfrac{1}{m!n!}\binom{k}{p_1-1,p_2,\cdots, p_m}.	
				\] 
			\item We have two cases:
				\begin{enumerate}[i)]
					\item If $p_{a_{q(d)-1}}+1= p_{a_{q(d)}}$ then $A_{\vec{r}_i}= b_{q(d)-1}+1$. Thus
						\[
							C_{\vec{r}_i,j}= \dfrac{b_{q(d)-1}+1}{m!n!}\binom{k}{p_1,\cdots, p_{a_{q(d)}}-1, \cdots, p_m}.
						\]
					\item If $p_{a_{q(d)-1}}+1< p_{a_{q(d)}}$ then $A_{\vec{r}_i}=1$. Thus 
						\[
							C_{\vec{r}_i,j}= \dfrac{1}{m!n!}\binom{k}{p_1,\cdots, p_{a_{q(d)}}-1, \cdots, p_m}.
						\]
				\end{enumerate}
			\item $A_{\vec{r}_i}=1$ and then 
				\[
					C_{\vec{r}_i,j}=\dfrac{1}{m!(n-1)!} \binom{k}{p_1,\cdots, p_m}. 
				\]
		\end{enumerate}
	\item Using the equation \ref{dimorb2}, with respect to the previous cases, we have that:
		\begin{enumerate}
			\item 
				\[
					\#S_i(\vec{r})=\dfrac{(m-1)!}{(b_1-1)!b_2!\cdots b_s!},
				\]
			and then  
				\[
					\# \vec{r}_i= \dfrac{1}{(b_1-1)!b_2!\cdots b_s!n!} \binom{k}{p_2,\cdots, p_m}.
				\]
			\item 
				\[
					\#S_i(\vec{r})=\dfrac{m!}{(b_1-1)!b_2!\cdots b_s!},
				\]
			and then  
				\[
					\# \vec{r}_i= \dfrac{1}{(b_1-1)!b_2!\cdots b_s!n!} \binom{k}{p_1-1,\cdots, p_m}.
				\]
			\item 
				\begin{enumerate}[i)]
					\item 
						\[
							\#S_i(\vec{r})=\dfrac{m!}{b_1\cdots (b_{q(d)-1}+1)! (b_{q(d)}-1)!\cdots b_s!}.
						\]
					\item 
						\[
							\#S_i(\vec{r})=\dfrac{m!}{b_1\cdots  (b_{q(d)}-1)!\cdots b_s!}.
						\]
				\end{enumerate}
			In both cases we get that   
				\[
					\# \vec{r}_i= \dfrac{1}{b_1!\cdots (b_{q(d)}-1)!\cdots b_s!n!}\binom{k}{p_1,\cdots, p_{a_{q(d)}}-1, \cdots, p_m}
				\]
			\item 
				\[
					\#S_i(\vec{r})=\dfrac{1}{m!(n-1)!} \binom{k}{p_1,\cdots, p_m},
				\]
			and then 
				\[
					\# \vec{r}_i= \dfrac{1}{b_1!\cdots b_s!(n-1)!} \binom{k}{p_1,\cdots, p_m}.
				\]
		\end{enumerate}
	\item We are going to compute 
		\[
			\sum_{(i,j,\vec{r}_i)\in S(m,n,\vec{p}_m)} \#\vec{r}_i,
		\]
	where $\vec{p}_m= b_1 p_{a_1}+\cdots+ b_s p_{a_s}=k+1-n$, with $p_{a_1}>1$ and $n>0$. The other cases follow by similar computations. By $2.$ the possible triples $(\vec{r}_i, i,j)\in S(\vec{p}_m,m,n)$ are the cases $b)$, $c)$ and $d)$. Hence 
	\[
		\begin{split}
			&\sum_{(i,j,\vec{r}_i)\in S(m,n,\vec{p}_m)} \#\vec{r}_i= \dfrac{1}{(b_1-1)!b_2!\cdots b_s!n!} \binom{k}{p_1-1,\cdots, p_m} \\
			&+ \sum_{d=2}^s \dfrac{1}{b_1!\cdots (b_{q(d)}-1)!\cdots b_s!n!}\binom{k}{p_1,\cdots, p_{a_{q(d)}}-1, \cdots, p_m} \\
			&+  \dfrac{1}{b_1!\cdots b_s!(n-1)!} \binom{k}{p_1,\cdots, p_m} \\
			&\overset{(*)}{=}\dfrac{1}{b_1!\cdots b_s!n!}\left(\sum_{q=1}^m \binom{k}{p_1,\cdots, p_q-1, \cdots, p_m}+ n \binom{k}{p_1,\cdots, p_m} \right)\\
			&= \dfrac{1}{b_1!\cdots b_s!n!}\binom{k+1}{p_1,\cdots, p_m}.
		\end{split}
	\] Equality $(*)$ is obtained using that \[\begin{split}b_1\binom{k}{p_1-1, \cdots, p_m}&=b_1\binom{k}{p_{a_1}-1,\stackrel{b_1}{\cdots},p_{a_1}, \cdots,p_{a_s},\stackrel{b_s}{\cdots}, p_{a_s}}\\&=\sum_{q=1}^{b_1}\binom{k}{p_1,\cdots,p_{q}-1,\cdots, p_m},\end{split}\]	
	and similar arguments for $b_{q(d)}$. By Equation \ref{auxcoeff} it follows that 
		\[
			\sum_{(i,j,\vec{r}_i)\in S(m,n,\vec{p}_m)} \#\vec{r}_i= \binom{k+1}{\vec{p}_m} \dfrac{\# S_m(\vec{p})}{m!n!}= C_{\vec{p}_m,n}.\qedhere
		\]
\end{enumerate}
\end{proof}
\end{prop}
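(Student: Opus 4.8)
The plan is to prove the identity \ref{form1} by induction on $k$, using Corollary \ref{derivative} as the only analytic ingredient and the orbit-counting results of Appendix \ref{appendix-partition} (Lemma \ref{repeatedfactor}, Equations \ref{dimorb} and \ref{dimorb2}) to handle the combinatorics of the coefficients. The base case $k=1$ is immediate: since $u\mapsto\ell^u$ is real analytic by Proposition \ref{derivativeproperty}, the chain and product rules give $\partial_t\ell_1^{u_t}(v_t)=\ell_1^{u_t}(\partial_t v_t)+\ell_2^{u_t}(\partial_t u_t\odot v_t)$, which is exactly the right-hand side of \ref{form1} for $k=1$, where the triple sum collapses to the single term $i=1$, $j=0$, $r_1=1$.

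For the inductive step I would assume \ref{form1} at level $k$ and differentiate both sides in $t$. The left-hand side becomes $\partial_t^{k+1}\ell_1^{u_t}(v_t)$; on the right-hand side, Corollary \ref{derivative} expands $\partial_t$ of a typical summand $\ell_{i+1}^{u_t}(\partial_t^{r_1}u_t\odot\cdots\odot\partial_t^{r_i}u_t\odot\partial_t^j v_t)$ into three kinds of terms --- one with $\ell_{i+2}^{u_t}$ carrying an extra factor $\partial_t u_t$, the $i$ terms in which one of $\partial_t^{r_1}u_t,\ldots,\partial_t^{r_i}u_t$ has its order raised by one, and one term in which $\partial_t^j v_t$ becomes $\partial_t^{j+1}v_t$. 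Every term produced has the shape $\ell_{m+1}^{u_t}(\partial_t^{p_1}u_t\odot\cdots\odot\partial_t^{p_m}u_t\odot\partial_t^n v_t)$ with $p_1+\cdots+p_m=k+1-n$, and several distinct source triples $(i,j,\vec r_i)$ feed into the same target $(m,n,\vec p_m)$; so the step reduces to a single coefficient identity. After passing to ordered partitions and grouping each $S_m$-orbit (legitimate because $\ell$ is graded symmetric in its slots), one must check that the coefficient accumulated at a target $(m,n,\vec p_m)$ equals $\binom{k+1}{\vec p_m}\frac{\# S_m(\vec p)}{m!\,n!}$, which by Lemma \ref{repeatedfactor} and \ref{dimorb2} equals $\frac{1}{b_1!\cdots b_s!\,n!}\binom{k+1}{\vec p_m}$ when $\vec p_m$ has repeated-factor factorization $b_1 p_{a_1}+\cdots+b_s p_{a_s}=k+1-n$.

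To establish that identity I would fix the target $(m,n,\vec p_m)$ and enumerate the source triples $(i,j,\vec r_i)$ feeding into it: decrementing the smallest part $p_{a_1}$ (with two subcases according as $p_{a_1}=1$, which lowers $m$ by one, or $p_{a_1}>1$), decrementing a larger distinct part $p_{a_d}$ for $2\le d\le s$, and the case $j=n-1$ with $\vec r_m=\vec p_m$. For each source I would compute, via Corollary \ref{derivative}, the multiplicity $A_{\vec r_i}$ with which it produces the target; this is $1$ in all cases except the delicate subcase where decrementing $p_{a_d}$ makes it coincide with $p_{a_{d-1}}$, where it is $b_{d-1}+1$. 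Multiplying each source's level-$k$ coefficient $\frac{1}{i!j!}\binom{k}{\vec r_i}$ by $A_{\vec r_i}$ and by the orbit size $\# S_i(\vec r)$ from \ref{dimorb2}, the factorials telescope and the surviving sum collapses via the multinomial Pascal identity $\sum_{q=1}^m\binom{k}{p_1,\ldots,p_q-1,\ldots,p_m,n}+\binom{k}{p_1,\ldots,p_m,n-1}=\binom{k+1}{p_1,\ldots,p_m,n}$, giving the claimed coefficient and closing the induction.

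I expect the main obstacle to be precisely this last piece of bookkeeping: exhaustively listing the source triples attached to a given target, getting the multiplicity $A_{\vec r_i}$ right in the subcase where a decrement creates a new coincidence among the parts, and tracking how the $b_d!$ factors coming from the orbit sizes cancel against those appearing in $\# S_m(\vec p)$ so that the Pascal identity can be applied cleanly. Everything else is a mechanical consequence of Corollary \ref{derivative} together with the analyticity of the twisting from Proposition \ref{derivativeproperty}.
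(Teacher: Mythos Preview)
Your proposal is correct and follows essentially the same route as the paper's own proof: induction on $k$ with Corollary \ref{derivative} as the sole analytic input, passage to ordered partitions and $S_m$-orbits via Lemma \ref{repeatedfactor} and \ref{dimorb2}, the same four-case enumeration of source triples (including the delicate subcase where a decrement produces a new coincidence and the multiplicity jumps to $b_{d-1}+1$), and the multinomial Pascal identity to close the induction. Your description of the ``main obstacle'' matches exactly the bookkeeping the paper carries out in detail in steps 1--5.
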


\addcontentsline{toc}{section}{References} 
\addtocontents{toc}{\vspace{2em}}
\bibliographystyle{alpha}
\bibliography{references}

\end{document}